\numberwithin{equation}{section}
\newtheorem{theorem}[equation]{Theorem}
\newtheorem{corollary}[equation]{Corollary}
\newtheorem{proposition}[equation]{Proposition}
\newtheorem{conjecture}[equation]{Conjecture}
\theoremstyle{definition}
\newtheorem{example}[equation]{Example}
\newtheorem{remark}[equation]{Remark}
\renewcommand{\AA}{\mathbb{A}}
\newcommand{\BB}{\mathbb{B}}
\newcommand{\CC}{\mathbb{C}}
\newcommand{\DD}{\mathbb{D}}
\newcommand{\EE}{\mathbb{E}}
\newcommand{\GG}{\mathbb{G}}
\newcommand{\II}{\mathbb{I}}
\newcommand{\JJ}{\mathbb{J}}
\newcommand{\LL}{\mathbb{L}}
\newcommand{\NN}{\mathbb{N}}
\newcommand{\RR}{\mathbb{R}}
\newcommand{\TT}{\mathbb{T}}
\newcommand{\ZZ}{\mathbb{Z}}
\newcommand{\I}{\mathbb{I}}
\newcommand{\C}{\mathbb{C}}
\newcommand{\N}{\mathbb{N}}
\newcommand{\calC}{\mathcal{C}}
\newcommand{\calP}{\mathcal{P}}
\newcommand{\calF}{\mathcal{F}}
\newcommand{\calB}{\mathcal{B}}
\newcommand{\calT}{\mathcal{T}}
\newcommand{\calN}{\mathcal{N}}
\newcommand{\calS}{\mathcal{S}}
\newcommand{\dif}{\mathrm{d}}
\newcommand{\ex}{\bm{e}}
\newcommand{\ind}[1]{\mathds{1}_{{#1}}}
\newcommand*{\DMO}[1]{\expandafter\DeclareMathOperator\csname #1\endcsname {#1}}
\DeclarePairedDelimiter\abs{\lvert}{\rvert}
\DeclarePairedDelimiter\norm{\lVert}{\rVert}
\DeclarePairedDelimiterX\spr[2]{\langle}{\rangle}{#1,#2}
\newcommand{\ipr}[2]{#1\cdot#2}
\DeclarePairedDelimiterX\Set[2]{\{}{\}}{#1\colon #2}
\DeclarePairedDelimiterX\Seq[1]{(}{)}{#1}
\def\8{\infty}
\newcommand{\vertiii}[1]{{\left\lvert\kern-0.25ex\left\lvert\kern-0.25ex\left\lvert #1 
    \right\rvert\kern-0.25ex\right\rvert\kern-0.25ex\right\rvert}} % Triple vertical bars
\begin{document}

\title[Oscillation inequalities in ergodic theory and analysis]{Oscillation inequalities in ergodic theory and analysis: one-parameter and multi-parameter perspectives}

\author{Mariusz Mirek }
\address[Mariusz Mirek]{
Department of Mathematics,
Rutgers University,
Piscataway, NJ 08854-8019, USA 
\&  School of Mathematics,
  Institute for Advanced Study,
  Princeton, NJ 08540,
  USA
\&
Instytut Matematyczny,
Uniwersytet Wroc{\l}awski,
Plac Grunwaldzki 2/4,
50-384 Wroc{\l}aw,
Poland}
\email{mariusz.mirek@rutgers.edu}

\author{Tomasz Z. Szarek} 
\address[Tomasz Z. Szarek]{
	BCAM - Basque Center for Applied Mathematics,
	48009 Bilbao, Spain  
	\&
Instytut Matematyczny,
Uniwersytet Wroc{\l}awski,
Plac Grunwaldzki 2/4,
50-384 Wroc{\l}aw,
Poland}
\email{tzszarek@bcamath.org}

\author{James Wright}
\address[James Wright]{Maxwell Institute of Mathematical Sciences and The School of Mathematics,
The University of Edinburgh
James Clerk Maxwell Building,
The King's Buildings,
Peter Guthrie Tait Road,
City Edinburgh,
EH9 3FD}
\email{J.R.Wright@ed.ac.uk}

\thanks{Mariusz Mirek was partially supported by NSF grant DMS-2154712, and by the National Science Centre
in Poland, grant Opus 2018/31/B/ST1/00204.  Tomasz Z. Szarek was
partially supported by the National Science Centre of Poland, grant
Opus 2017/27/B/ST1/01623, by Juan de la Cierva Incorporaci{\'o}n 2019
grant number IJC2019-039661-I funded by Agencia Estatal de
Investigaci{\'o}n, grant
PID2020-113156GB-I00/AEI/10.13039/501100011033 and also by the Basque
Government through the BERC 2018-2021 program and by Spanish Ministry
of Sciences, Innovation and Universities: BCAM Severo Ochoa
accreditation SEV-2017-0718.}

\begin{abstract}
In this survey we review useful tools that naturally arise in
the study of pointwise convergence problems in analysis, ergodic
theory and probability.  We will pay special attention to
quantitative aspects of pointwise convergence phenomena from the point
of view of oscillation estimates in both the single and several parameter 
settings. We establish a number of new oscillation inequalities and give new proofs for
known results with elementary arguments.
\end{abstract}

%\date{\today}

\maketitle

%%%%%%%%%%%%%%%%%%%%%%%%%%%%%%%%%%%%%%%%%%%%%%%%%%%%%%%%%%%%%%%%%%%%%%%%%%%%%%%%%%%%%%
\section{Introduction}\label{section:1}

Pointwise convergence is the most natural as well as the most
difficult type of convergence to establish. It requires 
sophisticated tools in analysis, ergodic theory and probability. In this survey,
we will review variation and oscillation semi-norms as well as the $\lambda$-jump counting function
which give us quantitative measures for pointwise convergence. However we will concentrate
on the central role that oscillation inequalities play, both in the one-parameter and multi-parameter
settings.

In the one-parameter setting we derive a simple abstract
oscillation estimate for the so-called projective operators, which will result in oscillation estimates for
martingales, smooth bump functions as well as the Carleson operator.  The
multi-parameter oscillation semi-norm is the only available tool that
allows us to handle efficiently multi-parameter pointwise convergence
problems with arithmetic features.  This contrasts sharply with the
one-parameter setting, where we have a variety of tools including
oscillations, variations or $\lambda$-jumps to handle pointwise convergence
problems.  The multi-parameter oscillation estimates will be
illustrated in the context of the Dunford--Zygmund ergodic theorem for
commuting measure-preserving transformations as well as 
observations of Bourgain for certain multi-parameter polynomial ergodic
averages.

We begin with describing methods that permit us to
handle pointwise convergence problems in the context of various
ergodic averaging operators. Before we do this we set up notation and
terminology, which will allow us to discuss various concepts in a fairly unified way. 

Throughout this survey the triple $(X, \mathcal B(X), \mu)$ denotes a
$\sigma$-finite measure space. The space of all formal $k$-variate polynomials
$P({\rm m}_1, \ldots, {\rm m}_k)$ with $k\in\ZZ_+$ indeterminates
${\rm m}_1, \ldots, {\rm m}_k$ and integer coefficients will be denoted by $\ZZ[{\rm m}_1, \ldots, {\rm m}_k]$. We will always identify each polynomial $P\in\ZZ[{\rm m}_1, \ldots, {\rm m}_k]$ with a function
$(m_1,\ldots, m_k)\mapsto P(m_1,\ldots, m_k)$ from $\ZZ^k$ to $\ZZ$.

Let $d, k \in\ZZ_+$, and consider a family
${\mathcal T} = (T_1,\ldots, T_d)$ of invertible commuting
measure-preserving transformations on $X$, polynomials
${\mathcal P} = (P_1,\ldots, P_d) \subset \ZZ[\mathrm m_1, \ldots, \rm m_k]$,
an integer $k$-tuple $M = (M_1,\ldots, M_k)\in\ZZ_+^k$, and a measurable
function $f:X\to\CC$. We consider the multi-parameter polynomial ergodic
average 
\begin{equation*}
\frac{1}{M_1 \cdots M_k} \sum_{m_1=1}^{M_1} \cdots \sum_{m_k=1}^{M_k}
f\bigl(T_1^{P_1({ m_1},\ldots,{ m_k})}
 \cdots T_d^{P_d({ m_1},\ldots,{ m_k})} x\bigr).
\end{equation*}
We denote this average by $A_{{M};X,{\mathcal T}}^{\mathcal P}f(x)$ and we use the notation
\begin{align}
\label{eq:31}
A_{{M}; X, {\mathcal T}}^{\mathcal P}f(x): =  \EE_{m\in Q_{M}}f(T_1^{P_1(m)}\cdots T_d^{P_d(m)}x), \qquad x\in X,
\end{align}
where $Q_{M}:=[M_1]\times\ldots\times[M_k]$ is a box in $\ZZ^k$ with $[N]:=(0, N]\cap\ZZ$ for any real number $N\ge1$, and $\EE_{y\in Y}f(y):=\frac{1}{\#Y}\sum_{y\in Y}f(y)$ for any finite set $Y$ and any  $f:Y\to\CC$.
We will often abbreviate $A_{M; X, {\mathcal T}}^{{\mathcal P}}$ to  $A_{M; X}^{{\mathcal P}}$ when the tranformations are understood.
Depending on how explicit we want to be, more precision may be necessary and 
we will write out the averages
\begin{align*}
A_{M;X}^{\mathcal P}f(x) =A_{M_1,\ldots, M_k;X}^{P_1,\ldots, P_d} f(x)
\qquad \text{ or } \qquad
A_{{M}; X, {\mathcal T}}^{\mathcal P}f(x) =
A_{M_1,\ldots, M_k;X,T_1,\ldots, T_d}^{P_1,\ldots, P_d} f(x).
\end{align*}

\begin{example}\label{ex:1}
Due to the Calder{\'o}n transference principle \cite{Cald}, the most
important dynamical system, from the point of view of pointwise
convergence problems, is the integer shift system.
Namely, it is the
$d$-dimensional lattice $(\ZZ^d, \mathcal B(\ZZ^d), \mu_{\ZZ^d})$
equipped with a family of shifts $S_1,\ldots, S_d:\ZZ^d\to\ZZ^d$,
where $\mathcal B(\ZZ^d)$ denotes the $\sigma$-algebra of all subsets
of $\ZZ^d$, $\mu_{\ZZ^d}$ denotes counting measure on $\ZZ^d$, and
$S_j(x):=x-e_j$ for every $x\in\ZZ^d$ (here $e_j$ is $j$-th basis
vector from the standard basis in $\ZZ^d$ for each $j\in[d]$). Then the
average $A_{M; X, {\mathcal T}}^{{\mathcal P}}$ from \eqref{eq:31} with
${\mathcal T} = (T_1,\ldots, T_d)=(S_1,\ldots, S_d)$ can be rewritten
for any $x=(x_1,\ldots, x_d)\in\ZZ^d$ and any finitely supported
function $f:\ZZ^d\to\CC$ as
\begin{align}
\label{eq:32}
A_{M; \ZZ^d}^{{\mathcal P}}f(x)=\EE_{m\in Q_{M}}f(x_1-P_1(m),\ldots, x_d-P_d(m)).
\end{align}
\end{example}

\subsection{Birkhoff's and von Neumann's ergodic theorems} In the
early 1930's Birkhoff \cite{BI} and von Neumann \cite{vN} established an
almost everywhere pointwise ergodic theorem and a mean ergodic theorem,
respectively, which we summarize in the following result.
\begin{theorem}[Birkhoff's and von Neumann's ergodic theorem]
\label{birk}
Let $(X,\mathcal B(X), \mu)$ be a $\sigma$-finite measure space equipped
with a measure-preserving transformation $T:X\to X$. Then for every
$p\in[1, \infty)$ and every $f\in L^p(X)$ the averages
\[
A_{M;X, T}^{\mathrm m}f(x)=\EE_{m\in [M]}f(T^{m}x), \qquad x\in X, \qquad M\in\ZZ_+
\]
converge almost everywhere on $X$ and in $L^p(X)$ norm as
$M\to\infty$.
\end{theorem}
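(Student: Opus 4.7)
The plan is to establish the mean convergence first via an abstract Hilbert-space argument, and then to deduce the almost-everywhere convergence from a maximal inequality combined with a.e.\ convergence on a dense subclass, linked together by Banach's principle.

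For von Neumann's mean statement I would work first in $L^2(X)$, where the Koopman operator $Uf := f \circ T$ is an isometry by measure preservation. The orthogonal decomposition
\[
L^2(X) = \ker(I - U) \oplus \overline{(I - U)L^2(X)}
\]
reduces the problem to two special cases: if $Uf = f$ then $A_{M;X,T}^{\mathrm m} f = f$, while on a coboundary $f = g - Ug$ the sum telescopes to $\frac{1}{M}(Ug - U^{M+1}g)$, whose $L^2$-norm is at most $2\|g\|_2/M$. Since the averages are contractions on $L^2(X)$, $L^2$-norm convergence extends from the dense span to all of $L^2(X)$. For general $p \in [1,\infty)$, Jensen's inequality and measure preservation give $\|A_{M;X,T}^{\mathrm m} f\|_{L^p(X)} \leq \|f\|_{L^p(X)}$, so $L^p$-convergence propagates from the dense subclass $L^p(X) \cap L^2(X)$ (which is dense in $L^p(X)$ by exhausting $X$ via sets of finite measure in the $\sigma$-finite setting) to all of $L^p(X)$.

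For Birkhoff's pointwise assertion the central ingredient is the maximal inequality
\[
\mu\bigl(\bigl\{x \in X : \sup_{M \geq 1} A_{M;X,T}^{\mathrm m}|f|(x) > \lambda\bigr\}\bigr) \leq \frac{C}{\lambda}\|f\|_{L^1(X)}, \qquad \lambda > 0,
\]
together with its strong-type $(p,p)$ companion for $p \in (1,\infty)$. I would derive these via the Calder{\'o}n transference principle recalled in Example \ref{ex:1}: orbit by orbit the supremum is dominated pointwise by the one-sided Hardy--Littlewood maximal operator on $\ZZ$, for which the classical Vitali covering argument (or the rising-sun lemma) supplies the weak-$(1,1)$ bound. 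With the maximal inequality in hand, Banach's principle reduces a.e.\ convergence for arbitrary $f \in L^p(X)$ to a.e.\ convergence on a dense subclass. A convenient such subclass is the sum of invariant functions in $L^p(X)$ and bounded coboundaries $g - g \circ T$ with $g \in L^p(X) \cap L^\infty(X)$: on the first a.e.\ convergence is trivial, and on the second the telescoping identity yields a uniform bound $|A_{M;X,T}^{\mathrm m}(g - g\circ T)(x)| \leq 2\|g\|_{L^\infty(X)}/M$.

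The main obstacle is the weak-type $(1,1)$ maximal inequality: this is the one step that resists purely soft functional-analytic arguments and requires a genuine geometric/combinatorial covering input. Everything else---the $L^2$ spectral decomposition, the contractivity of the averages, Banach's principle, and the density arguments in the $\sigma$-finite setting---is essentially formal.
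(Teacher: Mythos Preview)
Your proposal is correct and follows essentially the same route as the paper's outline: both use the two-step procedure of (i) a maximal inequality obtained via Calder{\'o}n transference to the Hardy--Littlewood maximal function on $\ZZ$, and (ii) pointwise convergence on the dense class ${\rm I}_T \oplus {\rm J}_T$ of invariant functions plus bounded coboundaries, with the telescoping identity handling ${\rm J}_T$ and the extension from $L^2$ to general $L^p$ going through $L^p \cap L^2$. The only cosmetic difference is that you treat the mean ergodic theorem separately via the same decomposition, whereas the paper folds both conclusions into the single two-step scheme.
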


Although there are many proofs of Theorem \ref{birk} in the
literature, (we refer for instance to the monographs \cite{EW, Peter}
for more details and the historical background), there is a particular proof
which is important in our context. This proof illustrates the classical strategy for
handling pointwise convergence problems, which is based on a two-step
procedure:
\begin{itemize}
\item[(i)] The first step establishes $L^p(X)$ boundedness (when
$p\in(1, \infty)$), or a weak type $(1, 1)$ bound (when $p=1$) of the
corresponding maximal function
$\sup_{M\in\ZZ_+}|A_{M;X, T}^{\mathrm m}f(x)|$.  This in turn, using
the Calder{\'o}n transference principle \cite{Cald}, can be derived
from the corresponding maximal bounds for the Hardy--Littlewood
maximal function $\sup_{M\in\ZZ_+}|A_{M;\ZZ}^{\mathrm m}f(x)|$ on the
set of integers, see \eqref{eq:32}. Having these maximal estimates in hand one can 
easily prove that the set
\begin{align*}
\qquad \qquad \mathbf P\mathbf C[L^{p}(X)]=\{f\in L^{p}(X): \lim_{M\to\infty}A_{M;X, T}^{\mathrm m}f \text{
exists $\mu$-almost everywhere on $X$}\}
\end{align*}
is closed in $L^{p}(X)$. 
\item[(ii)] In the second step one shows that
$\mathbf P\mathbf C[L^{p}(X)]= L^{p}(X)$. In view of the first step
the task is reduced to finding a dense class of functions in
$L^{p}(X)$ for which we have pointwise convergence. In our problem let us first assume $p=2$.
Then invoking a variant of Riesz decomposition \cite{Riesz} a good
candidate is the space ${\rm I}_T\oplus {\rm J}_T\subseteq L^2(X)$, where
\begin{align*}
\qquad\qquad {\rm I}_T:=\{f\in L^2(X): f\circ T=f\},
\qquad \text{ and } \qquad
{\rm J}_T:=\{g-g\circ T: g\in L^2(X)\cap L^{\infty}(X)\}.
\end{align*}
We then note that $A_{M;\ZZ}^{\mathrm m}f=f$ for  $f\in {\rm I}_T$, and $\lim_{M\to\infty}A_{M;X, T}^{\mathrm m}h=0$ for $h\in {\rm J}_T$, since
\[
A_{M;X, T}^{\mathrm m}h=M^{-1}\big(g\circ T-g\circ T^{M+1}\big)
\]
telescopes, whenever $h=g-g\circ T\in {\rm J}_T$. This establishes pointwise almost
everywhere convergence of $A_{M;X, T}^{\mathrm m}$ on ${\rm I}_T\oplus {\rm J}_T$, which is dense
in $L^2(X)$. These two steps guarantee that $\mathbf P\mathbf C[L^{2}(X)]= L^{2}(X)$. Consequently,
$A_{M;X, T}^{\mathrm m}$ converges pointwise on $L^{p}(X)\cap L^{2}(X)$ for any $p\in[1, \infty)$. Since 
$L^{p}(X)\cap L^{2}(X)$ is dense in $L^{p}(X)$  we also conclude, in view of the first step, that
$\mathbf P\mathbf C[L^{p}(X)]= L^{p}(X)$, and this completes a brief outline of the proof of Theorem~\ref{birk}.
\end{itemize}

\subsection{Dunford--Zygmund pointwise ergodic theorem}
In the early 1950's it was observed by Dunford \cite{D} and
independently by Zygmund \cite{Z} that the two-step procedure
 can be applied in a multi-parameter setting. More
precisely, the Dunford--Zygmund multi-parameter pointwise ergodic theorem, where the convergence is understood in the unrestricted sense, 
 can be formulated as follows.

\begin{theorem}[Dunford--Zygmund ergodic theorem]
\label{dz}
Let $d\in\ZZ_+$ and let $(X,\mathcal B(X), \mu)$ be a $\sigma$-finite measure space equipped with
a family $\mathcal T=(T_1,\ldots, T_d)$ of not necessarily commuting and measure-preserving transformations $T_1,  \ldots, T_d:X\to X$.
Then for every
$p\in(1, \infty)$ and every $f\in L^p(X)$ the averages
\[
A_{M_1,\ldots, M_d;X, \mathcal T}^{{\mathrm m}_1,\ldots, {\rm m}_d}f(x)
=\EE_{m\in Q_{M}}f(T_1^{m_1}\cdots T_d^{m_d}x), \qquad x\in X, \qquad M = (M_1,\ldots, M_d)\in\ZZ_+^d,
\]
converge almost everywhere on $X$ and in $L^p(X)$ norm as
$\min\{M_1,\ldots, M_d\}\to\infty$.
\end{theorem}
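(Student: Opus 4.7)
The plan is to implement, by induction on the number $d$ of transformations, the two-step procedure outlined after Theorem~\ref{birk}; the base case $d=1$ is Birkhoff's theorem. Assume the statement for $d-1$ and set $\mathcal T' := (T_1,\ldots,T_{d-1})$. Writing $A_j^{(M_j)} g(x) := \EE_{m_j \in [M_j]} g(T_j^{m_j} x)$, a direct computation (which does not invoke commutativity) gives the iterated factorisation
\[
A_{M_1,\ldots,M_d;X,\mathcal T}^{{\rm m}_1,\ldots,{\rm m}_d} f(x) \,=\, A_d^{(M_d)} A_{d-1}^{(M_{d-1})} \cdots A_1^{(M_1)} f(x).
\]
Since each $A_j^{(M_j)}$ is a positive linear contraction on every $L^p(X)$,
\[
\sup_{M_1,\ldots,M_d \in \ZZ_+} \bigl|A_{M_1,\ldots,M_d;X,\mathcal T}^{{\rm m}_1,\ldots,{\rm m}_d} f(x)\bigr| \,\leq\, T_d^* T_{d-1}^* \cdots T_1^* |f|(x),
\]
where $T_j^* g := \sup_{M_j} |A_j^{(M_j)} g|$. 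Each $T_j^*$ is of strong type $(p,p)$ for $p \in (1,\infty)$ by the one-parameter maximal inequality (via the Calder\'on transference principle), and iterating yields $L^p$-boundedness of the multi-parameter maximal operator.

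Given $f \in L^p(X)$, the inductive hypothesis provides $F(x) := \lim_{\min(M_1,\ldots,M_{d-1}) \to \infty} A_{M_1,\ldots,M_{d-1};X,\mathcal T'}^{{\rm m}_1,\ldots,{\rm m}_{d-1}} f(x)$ both a.e.\ and in $L^p(X)$, while Birkhoff's theorem applied to $F \in L^p(X)$ with $T_d$ produces $\tilde F(x) := \lim_{M_d\to\infty} A_d^{(M_d)} F(x)$. I would then decompose
\[
A_{M_1,\ldots,M_d;X,\mathcal T}^{{\rm m}_1,\ldots,{\rm m}_d} f - \tilde F \,=\, \bigl(A_d^{(M_d)} F - \tilde F\bigr) + A_d^{(M_d)}\bigl(A_{M_1,\ldots,M_{d-1};X,\mathcal T'}^{{\rm m}_1,\ldots,{\rm m}_{d-1}} f - F\bigr).
\]
The first summand tends to $0$ a.e.\ as $M_d \to \infty$. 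For the second, introduce the monotone envelope
\[
g_N(x) \,:=\, \sup_{\min(M_1,\ldots,M_{d-1})\geq N} \bigl|A_{M_1,\ldots,M_{d-1};X,\mathcal T'}^{{\rm m}_1,\ldots,{\rm m}_{d-1}} f(x) - F(x)\bigr|,
\]
so that whenever $\min(M_1,\ldots,M_{d-1}) \geq N$, the second summand is bounded in absolute value, uniformly in $M_d$, by $T_d^* g_N(x)$.

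The substantive obstacle is upgrading the expected smallness of $T_d^* g_N$ from $L^p(X)$ to the pointwise level, since we must supremize over the unbounded index set $M_d \in \ZZ_+$. The device is monotonicity in $N$. The inductive hypothesis gives $g_N \to 0$ a.e., and $g_N$ is dominated by the $L^p$ majorant $2\sup_{M'}|A_{M';X,\mathcal T'}^{{\rm m}_1,\ldots,{\rm m}_{d-1}} f| + |F|$ coming from the maximal bound of the first paragraph, so dominated convergence yields $g_N \to 0$ in $L^p(X)$. Since $T_d^*$ is monotone and $(g_N)$ is non-increasing, $(T_d^* g_N)$ is non-increasing and hence converges a.e.\ to some $h_\infty \geq 0$; the $L^p$-boundedness of $T_d^*$ forces $T_d^* g_N \to 0$ in $L^p$, pinning $h_\infty = 0$ a.e. This closes the induction for unrestricted a.e.\ convergence, and $L^p$-convergence then follows from the a.e.\ convergence combined with the maximal bound of the first paragraph, via dominated convergence along any sequence with $\min_j M_j \to \infty$.
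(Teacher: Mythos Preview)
Your proof is correct. The maximal-inequality step matches the paper's sketch exactly: both iterate the one-parameter Hardy--Littlewood bound through the factorisation $A_M = A_d^{(M_d)}\cdots A_1^{(M_1)}$. For the pointwise step, however, you take a somewhat different route. The paper invokes the classical two-step scheme literally, pointing (with a reference to \cite{Nevo}) to a multi-parameter adaptation of the Riesz/telescoping density argument from the Birkhoff discussion: one identifies a dense subspace on which convergence is explicit and then closes up using the maximal inequality. You instead run a clean induction on $d$ that bypasses any dense-subspace identification: after splitting $A_M f - \tilde F$ into the Birkhoff error for $F$ and the term $A_d^{(M_d)}(A_{M';\mathcal T'}f - F)$, your monotone-envelope device $g_N \downarrow 0$ together with the positivity and $L^p$-boundedness of $T_d^*$ forces $T_d^* g_N \to 0$ a.e., which handles the troublesome supremum over $M_d$ directly. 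This is a self-contained argument that does not require knowing a good dense class in advance, at the cost of a small extra measure-theoretic lemma (the monotone descent of $T_d^* g_N$). The paper's approach is more in keeping with the narrative about von Neumann's ${\rm I}_T \oplus {\rm J}_T$ decomposition; yours is arguably more portable when such a decomposition is unavailable.
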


\vskip 5pt
This theorem has a fairly simple proof, which is based on the following identity
\begin{align*}
A_{M_1,\ldots, M_d;X, \mathcal T}^{{\mathrm m}_1,\ldots, {\rm m}_d}f=
A_{M_1;X, T_1}^{{\mathrm m}_1}\circ \ldots \circ A_{M_d;X, T_d}^{{\mathrm m}_d}f.
\end{align*}
The $L^p(X)$ bounds (with $p\in(1, \infty]$) for the strong
maximal function
$\sup_{M\in\ZZ_+^d}|A_{M_1,\ldots, M_d;X, \mathcal T}^{{\mathrm m}_1,\ldots, {\rm m}_d}f|$
follow easily by applying $d$ times the corresponding $L^p(X)$ bounds
for $\sup_{M\in\ZZ_+}|A_{M;X, T}^{\mathrm m}f|$. This establishes the
first step in the two-step procedure described above. The second step
is based on a suitable adaptation of the telescoping argument to the
multi-parameter setting and an application of the classical Birkhoff
ergodic theorem, see \cite{Nevo} for more details.  These two steps establish Theorem~\ref{dz} and
motivates our further discussion on multi-parameter
convergence problems.  One also knows that pointwise convergence in
Theorem \ref{dz} may fail if $p = 1$, and that the operator
$f \mapsto \sup_{M\in\ZZ_+^d}|A_{M_1,\ldots, M_d;X, \mathcal T}^{{\mathrm m}_1,\ldots, {\rm m}_d}f|$
is not of weak type $(1, 1)$ in general (even if we assume that the transformations $T_j$, $1 \le j \le d$, commute). A model example is $X=\mathbb{Z}^d$ and $T_j x = x - e_j$, $1 \le j \le d$, where $e_j$ is the $j$th coordinate vector. Then the corresponding maximal operator is just the strong maximal operator for which it is well known that the weak type $(1,1)$ estimate does not hold.

\subsection{Quantitative tools in the study of pointwise convergence}
The approach described in the context of Theorem \ref{birk} and
Theorem \ref{dz} has a quantitative nature but it says nothing
quantitatively about pointwise convergence. This approach is very
effective in pointwise convergence questions arising in harmonic
analysis as there are many natural dense subspaces in
Euclidean settings, which can be used to establish pointwise
convergence. However for ergodic theoretic questions, when one works
with abstract measure spaces, the situation is dramatically
different as Bourgain showed \cite{B1,B2,B3}. We shall see more
examples below.

Consequently, the second step from the two-step procedure may require
more quantitative tools to establish pointwise convergence. To
overcome the difficulties with determining dense subspace for which
pointwise convergence may be verified, Bourgain \cite{B3} proposed
three other approaches.

\begin{enumerate}[label*={\arabic*}.]

\item The first approach is based on controlling the so-called oscillation
semi-norms. Let $\JJ\subseteq \NN$ be so that $\#\JJ\ge2$,
let $I=(I_j:j\in\NN_{\le J})$ be a strictly increasing sequence of length $J+1$ for some
$J\in\ZZ_+$, which takes values in $\JJ$, and recall that for any
sequence $(\mathfrak a_{t}: t\in\JJ)\subseteq\CC$,  and any exponent
$1 \leq r < \infty$, the $r$-oscillation
seminorm is defined by
\begin{align}
\label{eq:38}
O_{I, J}^r(\mathfrak a_{t}: t \in \JJ):=
\Big(\sum_{j=0}^{J-1}\sup_{\substack{I_j\le t<I_{j+1}\\ t\in \JJ}}
\abs{\mathfrak a_{t} - \mathfrak a_{I_j}}^r\Big)^{1/r}.
\end{align}
We will give a more general definition of $r$-oscillations in the multi-parameter
setting; see \eqref{eq:102}.

\item The second approach is based on controlling the so-called
$r$-variation seminorms. For any $\mathbb I\subseteq \NN$, any sequence
$(\mathfrak a_t: t\in\mathbb I)\subseteq \CC$, and any exponent
$1 \leq r < \infty$, the $r$-variation semi-norm is defined to be
\begin{align*}
V^{r}(\mathfrak a_t: t\in\mathbb I):=
\sup_{J\in\ZZ_+} \sup_{\substack{t_{0}<\dotsb<t_{J}\\ t_{j}\in\mathbb I}}
\Big(\sum_{j=0}^{J-1}  |\mathfrak a_{t_{j+1}}-\mathfrak a_{t_{j}}|^{r} \Big)^{1/r},
\end{align*}
where the latter supremum is taken over all finite increasing sequences in
$\mathbb I$. 

\item The third approach is based on studying the $\lambda$-jump
counting function which is closely related to 
$r$-variations. For any $\II\subseteq \NN$ and any $\lambda>0$, the $\lambda$-jump
counting function of a sequence  $(\mathfrak a_t: t\in\mathbb I)\subseteq \CC$ is defined by
\begin{align*}
N_\lambda(\mathfrak a_t:t\in\II):=\sup \{ J\in\N : \exists_{\substack{t_{0}<\ldots<t_{J}\\ t_{j}\in\I}}
: \min_{0 \le j \le J-1} |\mathfrak a_{t_{j+1}}-\mathfrak a_{t_{j}}| \ge \lambda \}.
\end{align*}
We also refer to Section \ref{section:2} for simple properties of
$r$-oscillations, $r$-variations and $\lambda$-jumps. These will be illustrated
in the context of bounded martingales, a toy model
explaining their quantitative nature 
and their usefulness in pointwise
convergence problems.

\end{enumerate}

\subsection{Bourgain's pointwise ergodic theorem}
In the early 1980's Bellow \cite{Bel} (being motivated by some
problems from equidistribution theory) and independently Furstenberg
\cite{F} (being motivated by some problems from additive combinatorics
in the spirit of Szemer{\'e}di's theorem \cite{Sem1} for arithmetic
progressions) posed the problem of whether for any polynomial
$P\in \ZZ[{\rm m}]$ and any measure-preserving map $T:X\to X$ on a
probability space $(X,\mathcal B(X), \mu)$, the averages
\begin{align}
\label{eq:8}
A_{M;X, T}^{P(\mathrm m)}f(x)=\EE_{m\in [M]}f(T^{P(m)}x), \qquad x\in X, \qquad M\in\ZZ_+
\end{align}
converge almost everywhere on $X$ as $M\to\infty$, for any
$f\in L^{\infty}(X)$.

An affirmative answer to this question was given by Bourgain in series
of groundbreaking papers \cite{B1,B2,B3} which we summarize in
the following theorem.
\begin{theorem}[Bourgain's ergodic theorem]
\label{bourgain}
Let $(X,\mathcal B(X), \mu)$ be a $\sigma$-finite measure space
equipped with an invertible measure-preserving transformation
$T:X\to X$. Assume that $P\in \ZZ[{\rm m}]$ is a polynomial such that
$P(0)=0$. Then for every $p\in(1, \infty)$ and every $f\in L^p(X)$ the
averages $A_{M;X, T}^{P}f$ from \eqref{eq:8}
converge almost everywhere on $X$ and in $L^p(X)$ norm as
$M\to\infty$.
\end{theorem}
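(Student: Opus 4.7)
The plan is to reduce matters to the integer shift system and then run a quantitative version of the two-step procedure from Section~\ref{section:1}: a maximal inequality gives closedness of the convergence set in $L^p$, while a sub-$\sqrt{J}$ oscillation estimate plays the role of the dense subspace (which is unavailable for polynomial iterates).

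\emph{Step 1: Transference and Fourier setup.} By the Calder\'on transference principle, it suffices to prove pointwise convergence of
\begin{align*}
A_{M;\ZZ}^{P}f(x)=\frac{1}{M}\sum_{m=1}^{M}f(x-P(m)),\qquad x\in\ZZ,
\end{align*}
for every $f\in\ell^p(\ZZ)$. This is a convolution on $\ZZ$, so via Plancherel the analysis reduces to understanding the exponential sum multipliers $m_M(\xi)=M^{-1}\sum_{m=1}^{M}e^{2\pi i P(m)\xi}$ on $\TT$.

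\emph{Step 2: Maximal inequality.} For $p\in(1,\infty)$ I would prove
$\bigl\|\sup_{M\in\ZZ_+}|A_{M;\ZZ}^{P}f|\bigr\|_{\ell^{p}(\ZZ)}\lesssim_{P,p}\|f\|_{\ell^{p}(\ZZ)}$
by the Hardy--Littlewood circle method. Split $\TT$ into major arcs around rationals $a/q$ with $q$ small relative to a suitable power of $M$ and the complementary minor arcs. Weyl's inequality gives power-saving cancellation on the minor arcs, while on each major arc one approximates $m_M$ by a product of a Gauss sum $g(a/q)$, a continuous Archimedean integral and a sharp frequency cutoff. A dyadic Rademacher--Menshov square function together with the $\ell^2$ bound from Plancherel and interpolation with the trivial $\ell^\infty$ estimate then yields the $\ell^p$ maximal bound. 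Via transference this gives closedness of $\mathbf{P}\mathbf{C}[L^p(X)]$ in $L^p(X)$.

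\emph{Step 3: Oscillation inequality and convergence.} Since the telescoping trick fails for polynomial iterates -- coboundaries $g-g\circ T^{P(m)}$ no longer collapse when $\deg P\ge 2$ -- I would replace the dense subspace argument by the quantitative oscillation bound
\begin{align*}
\sup_{J\in\ZZ_+}\sup_{I}\frac{1}{\sqrt{J}}\,\bigl\|O_{I,J}^{2}\bigl(A_{M;\ZZ}^{P}f:M\in\ZZ_+\bigr)\bigr\|_{\ell^{p}(\ZZ)}\lesssim_{P,p}\|f\|_{\ell^{p}(\ZZ)},
\end{align*}
where the inner supremum is taken over all strictly increasing $I=(I_j)_{j=0}^{J}\subseteq\ZZ_+$. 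If $A_{M;X,T}^{P}f$ failed to converge on a set of positive measure, then at each point of that set one could pick a sequence $I$ along which the oscillation semi-norm grows at least like $\sqrt{J}$, contradicting the displayed estimate for $J$ large. Hence the oscillation bound forces pointwise convergence, and $L^p$ norm convergence follows from the maximal inequality and dominated convergence.

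\emph{Main obstacle.} The crux is the oscillation estimate of Step~3, specifically obtaining an $o(\sqrt{J})$ loss. One must combine the major/minor arc decomposition of Step~2 with a long/short range splitting of scales $[I_j,I_{j+1})$: on long scales the multipliers approach a continuous Archimedean model whose oscillations are controlled by classical variational arguments, while on short scales the major-arc pieces coming from different moduli $q$ must be shown to be almost orthogonal. Establishing this orthogonality through careful Gauss sum estimates and a discrete analog of a Littlewood--Paley decomposition is the technical heart of Bourgain's proof and the place where the arithmetic nature of the polynomial $P$ truly enters.
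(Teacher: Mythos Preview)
Your outline follows the same architecture the paper attributes to Bourgain---transference to $\ZZ$, a maximal inequality via the circle method, and an oscillation estimate in place of a dense-subspace argument---so the strategy is right. But Step~3 as written has a genuine gap.

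The displayed inequality
\[
\sup_{J\in\ZZ_+}\sup_{I}\frac{1}{\sqrt{J}}\,\bigl\|O_{I,J}^{2}\bigl(A_{M;\ZZ}^{P}f:M\in\ZZ_+\bigr)\bigr\|_{\ell^{p}(\ZZ)}\lesssim_{P,p}\|f\|_{\ell^{p}(\ZZ)}
\]
is the \emph{trivial} bound: it says $\|O_{I,J}^{2}\|\lesssim J^{1/2}\|f\|$, which follows immediately from the maximal inequality of Step~2 (bound each of the $J$ terms by $2\sup_M|A_M f|$). Consequently your contradiction argument fails: if non-convergence forces the oscillation to grow like $\sqrt{J}$, that is perfectly compatible with an $O(\sqrt{J})$ upper bound. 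What Bourgain actually proves (and what the paper records as \eqref{eq:34}--\eqref{eq:41}) is the strictly stronger statement
\[
\|O_{I,J}^{2}(A_{M}f:M\in\LL_\tau)\|_{L^2(X)}\le C_{I,\tau}(J)\|f\|_{L^2(X)}\quad\text{with}\quad \lim_{J\to\infty}J^{-1/2}C_{I,\tau}(J)=0,
\]
i.e.\ an $o(\sqrt{J})$ bound, and only for $p=2$ along a $\tau$-lacunary set $\LL_\tau$. You clearly know this---you say ``obtaining an $o(\sqrt{J})$ loss'' in the obstacle paragraph---but the formal statement you wrote down is the wrong one, and the deduction of pointwise convergence from it is invalid. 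Replace the display by the $o(\sqrt{J})$ version (or, if you want the clean uniform bound as in Theorem~\ref{thm:mainet}(vi), state that and cite the later refinements), and replace the pointwise-choice-of-$I$ sketch by the measure-theoretic argument of Proposition~\ref{prop:4}, which constructs a \emph{single} sequence $I$ on which the $L^p$ oscillation norm is bounded below by $cJ^{1/(p\vee r)}$.

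A smaller point: Bourgain's oscillation estimate lives on $L^2$ and on lacunary scales; pointwise convergence on $L^p$ for $p\ne 2$ then comes from the maximal inequality and density of $L^2\cap L^p$, not from an $\ell^p$ oscillation bound. Your Step~3 claims the oscillation inequality directly in $\ell^p$, which is a substantially harder theorem (it is \eqref{eq:37}, proved only much later).
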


Theorem \ref{bourgain} is an instance where establishing
pointwise convergence on a dense class is a challenging problem.  The
decomposition ${\rm I}_T\oplus {\rm J}_T$ of von Neumann (as for
$A_{M;X, T}^{{\mathrm m}}$) is not sufficient if $\deg P\ge2$, though it
still makes sense.  Even for the squares $P(m)=m^2$ it is not clear
whether $\lim_{M\to\infty}A_{M;X, T}^{{\mathrm m}^2}h=0$ for
$h\in {\rm J}_T$. The reason is that the averages
$A_{M;X, T}^{{\mathrm m}^2}h$ do not telescope for $h\in {\rm J}_T$
anymore, since the differences $(m+1)^2-m^2=2m+1$ have unbounded gaps.

Nearly two decades after Bourgain papers \cite{B1,B2,B3}, it was discovered
that the range of $p\in(1, \infty)$ in Bourgain's theorem is
sharp. In contrast to Birkhoff's theorem, if $P\in\ZZ[\rm m]$ is a
polynomial of degree at least two, the pointwise convergence at the
endpoint for $p=1$ may fail as was shown by Buczolich and Mauldin
\cite{BM} for $P(m)=m^2$ and by LaVictoire \cite{LaV1} for $P(m)=m^k$
for any $k\ge2$. This also stands in sharp contrast to what happens for 
continuous analogues of ergodic averages
and shows that any intuition that we build
in Euclidean harmonic analysis (when sums are replaced with integrals) can fail dramatically in 
discrete problems.

Bourgain \cite{B1,B2,B3} also used the two-step procedure to prove
Theorem \ref{bourgain}. In the first step, it was proved that for all
$p\in(1, \infty]$, there exists $C_{p, P}>0$ such that for every
$f\in L^p(X)$ we have
\begin{align}
\label{eq:33}
\norm[\big]{\sup_{M\in\ZZ_+}\abs{A_{M; X, T}^{P}f}}_{L^p(X)}
\le C_{p, P}\norm{f}_{L^p(X)}.
\end{align}

However, in the second step of the two-step procedure a quantitative
pointwise ergodic theorem was established by studying oscillation
semi-norms, see \eqref{eq:38}.  More, precisely, it was proved that
for any $\tau>1$, any sequence of integers
$I=(I_j:{j\in\NN})\subseteq\LL_{\tau}:=\{\lfloor \tau^n\rfloor: n\in\NN\}$
such that $I_{j+1}>2I_j$ for all $j\in\NN$, and any $f\in L^2(X)$ one
has
\begin{align}
\label{eq:34}
\norm[\big]{O_{I, J}^2(A_{M; X, T}^{P}f:M\in\LL_\tau)}_{L^2(X)}
\le C_{I, \tau}(J)\norm{f}_{L^2(X)}, \qquad J\in\ZZ_+,
\end{align}
where $C_{I, \tau}(J)$ is a constant depending on $I$ and $\tau$ and
satisfies
\begin{align}
\label{eq:41}
\lim_{J\to\infty} J^{-1/2}C_{I, \tau}(J)=0.
\end{align}

Bourgain \cite{B1, B2, B3} had the ingenious insight to see that
inequality \eqref{eq:34} with \eqref{eq:41} suffices to establish
pointwise convergence of $A_{M; X, T}^{P}f$ for any
$f\in L^2(X)$. Inequality \eqref{eq:34} with \eqref{eq:41} can be
thought of as the weakest possible quantitative form for pointwise
convergence. On the one hand, \eqref{eq:34} is very close to the
maximal inequality, since by using \eqref{eq:33} with $p=2$ we can derive \eqref{eq:34} with a 
constant at most $J^{1/2}$. On the other hand, any
improvement (better than $J^{1/2}$)  for the
constant in \eqref{eq:34} implies \eqref{eq:41} and so ensures
pointwise convergence of $A_{M; X, T}^{P}f$ for any
$f\in L^2(X)$, see Proposition \ref{prop:4}, where the details, even
in the multi-parameter setting, are given.  Therefore, from this point
of view, inequality \eqref{eq:34} with \eqref{eq:41} is the minimal
quantitative requirement necessary to establish pointwise convergence.

Bourgain's papers \cite{B1, B2, B3} were a significant breakthrough in
ergodic theory, which used a variety of new tools (ranging from
harmonic analysis and number theory through probability and the theory of
Banach spaces) to study pointwise convergence problems in analysis
understood in a broad sense. In \cite{B3}
a complete proof of Theorem \ref{bourgain} is given using the notions of
$r$-variations and $\lambda$-jumps (introduced by Pisier and Xu \cite{PX}), which are
two important quantitative tools in the study of pointwise
convergence problems. This initiated a systematic study of
quantitative estimates in harmonic analysis and ergodic theory which
resulted in a vast literature: in ergodic theory \cite{BM, jkrw, JR1,
JRW, JRW1, LaV1, MST2, Kr, KMT, MSS, MSZ3}, in discrete harmonic analysis
\cite{IMMS, IMSW, IMW, IW, M10, MSZ1, MSW, MT, Pierce, RW}, and in
classical harmonic analysis \cite{BORSS, BRS, CJRW, DDU, GRY, JG, JSW, La1, La2, OSTTW, MST1, MSZ2}. 

Not long after \cite{B3}, Lacey refined Bourgain's argument
\cite[Theorem 4.23, p. 95]{RW}, and showed that for every $\tau>1$
there is a constant $C_{\tau}>0$ such that for any $f\in L^2(X)$ one
has
\begin{align}
\label{eq:35}
\sup_{J\in\ZZ_+}\sup_{I\in \mathfrak S_J(\LL_{\tau})}
\norm[\big]{O_{I, J}^2(A_{M; X, T}^{P}f:M\in\LL_\tau)}_{L^2(X)}
\le C_{\tau}\norm{f}_{L^2(X)},
\end{align}
where $\mathfrak S_J(\LL_{\tau})$ denotes the set of all strictly
increasing sequences $I=(I_j:j\in\NN_{\le J})\subset \LL_{\tau}$ of
length $J+1$ for some $J\in\ZZ_+$.  Inequality \eqref{eq:35} was the
first uniform oscillation result in the class of $\tau$-lacunary
sequences.  Lacey's observation naturally motivated a question (which
also motivates this survey) whether there are uniform estimates, independent of
$\tau>1$, of oscillation inequalities in \eqref{eq:35}. For the
Birkhoff averages $A_{M;X, T}^{{\mathrm m}}$, this was explicitly formulated
in \cite[Problem 4.12, p. 80]{RW}. We will discuss below uniform oscillation
estimates as well as other quantitative forms of pointwise convergence
including $r$-variations and $\lambda$-jumps.

\subsection{Martingales: a model to study pointwise convergence problems}
In order to understand the relationship between $r$-oscillations,
$r$-variations and $\lambda$-jumps, we will use bounded martingales
$\mathfrak f=(\mathfrak f_n:X\to \CC:n\in\ZZ_+)$ as a toy model to
help us understand the connections and various nuances. All properties that will be used
in the discussion below are collected in Section \ref{section:2}. The
discussion will follow the development of the various notions in chronological order.

The $r$-variations for
$\mathfrak f=(\mathfrak f_n:X\to \CC:n\in\ZZ_+)$ were investigated by L{\'e}pingle
\cite{Lep} who established that for all $r\in(2, \infty)$ and
$p\in(1, \infty)$, there is a constant $C_{p, r}>0$ such that
\begin{align}
\label{eq:42}
\norm{V^r(\mathfrak f_n: n\in\ZZ_+)}_{L^p(X)}
\le C_{p, r}\sup_{n\in\ZZ_+}\norm{\mathfrak f_n}_{L^p(X)}.
\end{align}
In fact, L{\'e}pingle \cite{Lep} also proved a weak type $(1, 1)$
estimate.  A counterexample from \cite{JG} for $r=2$
shows that \eqref{eq:42} holds with sharp ranges of exponents.  This
counterexample plays an important role showing $r$-variation estimates only 
hold when $r>2$. In fact this is the best we can expect in
applications in analysis and ergodic theory.

Inequality \eqref{eq:42} can be thought of as an extension of Doob's
maximal inequality for martingales, which gives a quantitative form of
the martingale convergence theorem. Indeed on the one hand, inequality
\eqref{eq:42} implies that the sequences $(\mathfrak f_n: n\in\ZZ_+)$
converges almost everywhere on $X$ as $n\to \infty$. One the other
hand, one has 
\begin{align*}
\norm{\sup_{n\in\ZZ_+}|\mathfrak f_n|}_{L^p(X)}\le \norm{V^r(\mathfrak f_n: n\in\ZZ_+)}_{L^p(X)}+\norm{\mathfrak f_{n_0}}_{L^p(X)}
\end{align*}
for any $n_0\in\ZZ_+$ (see \eqref{eq:68} below),
which shows that $r$-variational estimates lie deeper than
maximal function estimates.  We refer to \cite{PX,B3,MSZ1}
for generalizations and different proofs of \eqref{eq:42}.

Interestingly, Bourgain \cite{B3} gave a new proof of inequality \eqref{eq:42},
where it was used to address the issue of pointwise convergence of
$A_{M;X, T}^{P}f$, see \eqref{eq:8}. This initiated
a systematic study of $r$-variations and other quantitative estimates
in harmonic analysis and ergodic theory, which resulted in a vast
literature \cite{jkrw,JR1,JSW,JG, OSTTW, MST2, MSZ1, MSZ2, MSZ3, zk} and recently \cite{IMMS, KMT,  MSS}.
Due to \eqref{eq:72} below one has
\begin{align}
\label{eq:43}
\sup_{\lambda>0}\|\lambda N_{\lambda}(\mathfrak{f}_n: n\in\ZZ_+)^{1/r}\|_{L^p(X)}
\le \|V^r(\mathfrak{f}_n: n\in\ZZ_+)\|_{L^p(X)},
\end{align}
which combined with \eqref{eq:42} implies $\lambda$-jump inequalities for
martingales for any $r>2$. Although the right hand side of
\eqref{eq:43} blows up when $r\to 2$, it is possible to prove that for
every $p\in(1, \infty)$ there exists a constant $C_p>0$ such that
\begin{align}
\label{eq:44}
\sup_{\lambda>0}\|\lambda N_{\lambda}(\mathfrak{f}_n: n\in\ZZ_+)^{1/2}\|_{L^p(X)}
\le C_p\sup_{n\in\ZZ_+}\norm{\mathfrak f_n}_{L^p(X)}.
\end{align}
Inequality \eqref{eq:44} was first established by Pisier and Xu
\cite{PX} on $L^2(X)$ and then extended by Bourgain \cite[inequality
(3.5)]{B3} on $L^p(X)$ for all $p\in(1, \infty)$. In fact, Bourgain
used \eqref{eq:44} to prove \eqref{eq:42} by noting that \eqref{eq:43}
can be reversed in the sense that for every $p\in[1,\infty]$ and
$1\le \rho<r\le\infty$ one has
\begin{align}
\label{eq:45}
\| V^{r}( \mathfrak{f}_n : n \in \ZZ_+ ) \|_{L^{p,\infty} (X)}
\lesssim_{p,\rho,r}
\sup_{\lambda>0} \| \lambda N_{\lambda}(\mathfrak{f}_n: n\in \ZZ_+)^{1/\rho} \|_{L^{p,\infty}(X)},
\end{align}
which follows from \eqref{estt1} below. One cannot replace
$L^{p,\8} (X)$ with $L^{p} (X)$ in \eqref{eq:45}, see \cite{MSS} for
more details. Combining \eqref{eq:44} and \eqref{eq:45} with
$\rho=2$ and interpolating, one obtains
\eqref{eq:42}. Therefore uniform $\lambda$-jump estimates from \eqref{eq:44}
can be thought of as endpoint estimates for $r$-variations where we have  seen that
$r$-variations may be unbounded at the endpoint in
question. We have already noted the failure of L{\'e}pingle's inequality
\eqref{eq:42} when $r=2$.

Even though we have a fairly complete picture of the relationship between
$r$-variations and $\lambda$-jumps, the relations with
$r$-oscillations are less obvious. It follows from \eqref{eq:62} below that
\begin{align}
\label{eq:46}
\sup_{J\in\ZZ_+}\sup_{I\in\mathfrak{S}_{J}(\ZZ_+)}\|O^{r}_{I, J}( \mathfrak{f}_n : n \in \ZZ_+ ) \|_{L^{p} (X)}
\le \| V^{r}( \mathfrak{f}_n : n \in \ZZ_+ ) \|_{L^{p} (X)},
\end{align}
where $\mathfrak{S}_{J}(\ZZ_+)$ denotes the set of all strictly
increasing sequences $I=(I_j:j\in\NN_{\le J})\subset \ZZ_+$ of length
$J+1$ for some $J\in\ZZ_+$.  In view of
\eqref{eq:42}, this immediately implies $r$-oscillations estimates for martingales on $L^p(X)$ for all 
 $r\in(2, \infty)$ and
$p\in(1, \infty)$.

It was shown by Jones, Kaufman, Rosenblatt and Wierdl \cite[Theorem 6.4, p. 930]{jkrw} that for every $p\in(1, \infty)$ there is a constant $C_p>0$ such that
\begin{align}
\label{eq:47}
\sup_{J\in\ZZ_+}\sup_{I\in\mathfrak{S}_{J}(\ZZ_+)}\|O^{2}_{I, J}( \mathfrak{f}_n : n \in \ZZ_+ ) \|_{L^{p} (X)}
\le C_{p}\sup_{n\in\ZZ_+}\norm{\mathfrak f_n}_{L^p(X)}.
\end{align}
Inequality \eqref{eq:47} is also  an extension of Doob's
maximal inequality for martingales, as one has
\begin{align*}
\norm{\sup_{n\in\ZZ_+}|\mathfrak f_n|}_{L^p(X)}\le \sup_{J\in\ZZ_+}\sup_{I\in\mathfrak{S}_{J}(\ZZ_+)}\|O^{2}_{I, J}( \mathfrak{f}_n : n \in \ZZ_+ ) \|_{L^{p} (X)}+\norm{\mathfrak f_{n_0}}_{L^p(X)}
\end{align*}
for any $n_0\in\ZZ_+$. This follows from Proposition
\ref{prop:5} below. Moreover, in view of Proposition \ref{prop:4}, inequality
\eqref{eq:47} also gives a quantitative form of the martingale
convergence theorem.

In Section \ref{section:3} we give a new proof of inequality
\eqref{eq:47}, which follows from an abstract result formulated for
certain projections, see Theorem \ref{thm:p} in Section \ref{section:3}. This abstract theorem will also
establish oscillation inequalities for smooth bump functions
(see Proposition \ref{prop:2} and Theorem \ref{thm:1}), and establish
oscillation inequalities for the Carleson operator (see Proposition
\ref{prop:car} as well as Proposition \ref{prop:6}). It will also show that
oscillation estimates are very close to maximal estimates even though
it follows from Proposition \ref{prop:5} that oscillations always
dominate maximal functions, see the discussion below Theorem \ref{bourgain}.

Inequalities \eqref{eq:46} and \eqref{eq:47} are  similar
to inequalities \eqref{eq:43} and \eqref{eq:44}, respectively, and this raises a
natural question whether $2$-oscillations can be interpreted as an
endpoint for $r$-variations when $r>2$ in the sense of inequality
\eqref{eq:45}. Recently this problem was investigated in \cite[Theorem 1.9]{MSS} and answered in the
negative. Specifically, one can show if
$1 \le p < \infty$ and
$1 < \rho \le r < \infty$ are fixed, then it is \textbf{not true} that the
estimates
\begin{align}
\label{eq:49}
\begin{split}
\sup_{\lambda > 0} \| \lambda N_{\lambda}(f(\cdot,t):t\in\NN)^{1/r} \|_{\ell^{p,\infty}(\ZZ)} 
&\le
C_{p,\rho,r}
\sup_{I\in\mathfrak S_\infty (\NN)} 
\| O_{I,\infty}^{\rho}(f(\cdot,t):t\in\NN) \|_{\ell^{p}(\ZZ)},\\
\| V^r(f(\cdot,t):t\in\NN) \|_{\ell^{p,\infty}(\ZZ)} 
&\le
C_{p,\rho,r}
\sup_{I\in\mathfrak S_\infty (\NN)} 
\| O_{I,\infty}^{\rho} (f(\cdot,t):t\in\NN) \|_{\ell^{p}(\ZZ)}
\end{split}
\end{align}
hold uniformly for every measurable function
$f \colon \ZZ \times \NN \to \RR$.  The failure of the inequalities \eqref{eq:49} 
shows that the space induced by $\rho$-oscillations is different from
the spaces induced by $r$-variations and $\lambda$ jumps whenever
$\rho\le r$.  Also the failure of the inequalities \eqref{eq:49} shows that $\rho$-oscillation
inequalities cannot be seen (at least in a straightforward way,
understood in the sense of inequality \eqref{eq:45}) as endpoint
estimates for $r$-variations, though it still makes sense to ask
whether a priori bounds for $2$-oscillations imply bounds for
$r$-variations for any $r>2$. This is an intriguing question from the
point of view of quantitative pointwise convergence problems. If true, it
would reduce pointwise convergence problems to the study of $2$-oscillations,
which in certain cases are simpler since they are closer to square
functions.

\subsection{Quantitative forms of Bourgain's ergodic theorem}
Quantitative bounds in the context of ergodic  polynomial averaging
operators have been intensively studied over the last decade. These investigations were the subject of the following papers \cite{MST1, MST2, MSZ3, MSS}, which generalized Bourgain's papers \cite{B1, B2, B3} in various ways and can be summarized as follows.

\begin{theorem}
\label{thm:mainet}
Let $d, k\in\ZZ_+$, and
$\mathcal P=(P_1, \ldots, P_d)\subset\ZZ[{\rm m}_1, \ldots, {\rm m}_k]$
such that $P_j(0)=0$ for $j\in[d]$ be given.  Let $(X, \calB(X), \mu)$
be a $\sigma$-finite measure space endowed with a family
$\calT=(T_1,\ldots, T_d)$ of commuting invertible measure-preserving
transformations on $X$.  Let $f\in L^p(X)$ for some
$1\le p\le \infty$, and for $M\in\ZZ_+$ let
$A_{{M}; X, {\mathcal T}}^{\mathcal P}f = A_{M_1,\ldots, M_k;X,T_1,\ldots, T_d}^{P_1,\ldots, P_d} f$
be the polynomial ergodic average defined in \eqref{eq:31} with the
parameters $M_1=\ldots=M_k=M$.
\begin{itemize}
\item[(i)] \textit{(Mean ergodic theorem)} If $1<p<\infty$, then the averages
$A_{{M}; X, {\mathcal T}}^{\mathcal P}f$ converge in $L^p(X)$ norm as $M\to \infty$.

\item[(ii)] \textit{(Pointwise ergodic theorem)} If $1<p<\infty$, then the averages
$A_{{M}; X, {\mathcal T}}^{\mathcal P}f$ converge pointwise almost everywhere on $X$ as $M\to\infty$.

\item[(iii)] \textit{(Maximal ergodic theorem)}
If $1<p\le\infty$, then one has
\begin{align}
\label{eq:5}
\big\|\sup_{M\in\ZZ_+}|A_{{M}; X, {\mathcal T}}^{\mathcal P}f|\big\|_{L^p(X)}\lesssim_{d,k,p, \deg \mathcal P}\|f\|_{L^p(X)}.
\end{align}

\item[(iv)] \textit{(Variational ergodic theorem)}
If $1<p <\infty$ and $2<r<\infty$, then one has
\begin{align}
\label{eq:7}
\big\|V^r(A_{{M}; X, {\mathcal T}}^{\mathcal P}f: M\in\ZZ_+)\big\|_{L^p(X)}\lesssim_{d,k,p, r,  \deg \mathcal P}\|f\|_{L^p(X)}.
\end{align}

\item[(v)] \textit{(Jump ergodic theorem)}
If $1<p<\infty$, then one has
\begin{align}
\label{eq:36}
\sup_{\lambda>0}\big\|\lambda N_{\lambda}(A_{{M}; X, {\mathcal T}}^{\mathcal P}f:M\in\ZZ_+)^{1/2}\big\|_{L^p(X)}\lesssim_{d,k,p, \deg \mathcal P}\|f\|_{L^p(X)}.
\end{align}

\item[(vi)] \textit{(Oscillation ergodic theorem)}
If $1<p<\infty$,  then one has
\begin{align}
\label{eq:37}
\sup_{J\in\ZZ_+}\sup_{I\in\mathfrak S_J(\ZZ_+) }\big\|O_{I, J}^2(A_{{M}; X, {\mathcal T}}^{\mathcal P}f: M\in\ZZ_+)\big\|_{L^p(X)}\lesssim_{d,k,p, \deg \mathcal P}\|f\|_{L^p(X)}.
\end{align}
\end{itemize}
Moreover, the implicit constants in \eqref{eq:5}, \eqref{eq:7}, \eqref{eq:36} and \eqref{eq:37} can be taken to be independent of the
coefficients of the polynomials from $\mathcal P$, depending only on $p$ and the degree of the family $\mathcal P$.
\end{theorem}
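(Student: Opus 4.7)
The six conclusions are tightly interlinked, so I would first reduce the workload. Proposition~\ref{prop:4} ensures that the oscillation estimate (vi) already implies the pointwise ergodic theorem (ii); Proposition~\ref{prop:5} ensures that oscillations dominate the maximal function, so (vi) gives (iii); and the mean ergodic theorem (i) then follows from (ii) and (iii) by a standard density argument on $L^1(X)\cap L^{\infty}(X)$. On the quantitative side, the jump estimate (v) implies the variational estimate (iv) for $r>2$ via the reverse jump-to-variation inequality \eqref{eq:45} interpolated against the maximal bound. Thus the analytic heart of the theorem lies in establishing (v) and (vi); I note in passing that \eqref{eq:46} alone would recover (vi) from (iv) only for $r>2$, so reaching the sharp exponent $r=2$ in the oscillation norm requires an independent argument.

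My plan is to invoke the Calder\'on transference principle (Example~\ref{ex:1}) to reduce everything to the integer shift system $(\ZZ^d,\mu_{\ZZ^d})$ with $T_j x = x - e_j$, on which $A_{M;\ZZ^d}^{\mathcal P}$ becomes convolution against a kernel whose Fourier multiplier is the exponential sum
\begin{equation*}
m_M(\xi) \;=\; \EE_{m \in [M]^k}\, e^{-2\pi i\bigl( P_1(m)\xi_1 + \cdots + P_d(m)\xi_d\bigr)}, \qquad \xi \in \TT^d.
\end{equation*}
I would then analyze $m_M$ by the Hardy--Littlewood circle method, splitting each $\xi$ into major arcs (those close to rationals $a/q$ with small $q$) and minor arcs. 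Multidimensional Weyl-type inequalities give a power saving $\abs{m_M(\xi)} \lesssim M^{-\delta}$ on the minor arcs, sufficient after a Littlewood--Paley decomposition to control their contribution by an $\ell^{2}$ square function. On the major arcs, $m_M$ factors, up to admissible error, as a product of a Gauss-type arithmetic sum and a smooth Euclidean integral; the analytic part can be compared with continuous polynomial averages whose variation, jump and oscillation inequalities are already known (work of L\'epingle, Jones--Seeger--Wright, and their extensions), while the arithmetic part is handled by the Ionescu--Wainger multiplier theorem, which supplies uniform $\ell^{p}$ bounds for $1<p<\infty$ independent of the coefficients of $\mathcal P$.

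To assemble (v) and (vi), I would restrict first to a lacunary subsequence $\LL_{\tau} = \{\lfloor \tau^n\rfloor: n\in\NN\}$ and decompose each oscillation seminorm into a \emph{long} oscillation along $\LL_{\tau}$ and \emph{short} oscillations within each block $[\lfloor\tau^n\rfloor,\lfloor\tau^{n+1}\rfloor)$. The short oscillations I would dominate by a shifted square function exploiting the smoothness of the major-arc multiplier in $M$, combined with a Rademacher--Menshov argument to promote $\ell^{2}$ bounds to $2$-oscillation bounds uniformly in the choice of $I\in\mathfrak{S}_J(\ZZ_+)$. The long oscillations reduce, after the circle-method decomposition, to scale-separated comparisons with the continuous theory and to $\ell^{p}$ square-function bounds for the minor arcs. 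For (v) at the endpoint $r=2$, I would use the Pisier--Xu--Bourgain jump lemma \eqref{eq:44} on $L^{p}$ applied to the continuous approximants and transfer back through the major-arc/minor-arc split.

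The main obstacle, as I see it, is the arithmetic part on the major arcs together with the multi-parameter structure of $[M]^k$: one must extract uniform $\ell^p$ bounds for the Ionescu--Wainger projections that detect the cancellation created by the full vector-valued map $m \mapsto (P_1(m),\ldots,P_d(m))$, simultaneously uniformly in the coefficients of $\mathcal P$ and uniformly in the lacunary parameter $\tau$. A subsidiary difficulty is achieving the sharp $r=2$ exponent in the jump inequality, which rules out any argument that sacrifices even an $\varepsilon$ in $r$; this is precisely where the technical bulk of \cite{MST1, MST2, MSZ3, MSS} is concentrated, and where the payoff of Ionescu--Wainger theory over more classical Weyl-sum estimates becomes essential.
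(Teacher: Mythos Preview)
Your proposal is correct and follows essentially the same approach the paper itself describes: the paper does not prove Theorem~\ref{thm:mainet} in full but attributes it to \cite{MST1, MST2, MSZ3, MSS}, and its Remark~6 lists precisely the ingredients you identify---Calder\'on transference to $\ZZ^d$, the Hardy--Littlewood circle method with major/minor arc decomposition, Weyl's inequality, Ionescu--Wainger multiplier theory, the Rademacher--Menshov argument, and the Magyar--Stein--Wainger sampling principle (your ``comparison with continuous polynomial averages''). Your reduction scheme among (i)--(vi) also matches the paper's Remark~2, including the observation that (v) implies (iv) via \eqref{estt1} (your \eqref{eq:45}) plus interpolation.
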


We now give some remarks about Theorem \ref{thm:mainet}.

\begin{enumerate}[label*={\arabic*}.]

\item Theorem \ref{thm:mainet} is a multi-dimensional, quantitative
counterpart of Theorem \ref{bourgain} with sharp ranges of parameters
$1<p<\infty$ and $2<r<\infty$, which contributes to the
Furstenberg--Bergelson--Leibman conjecture \cite[Section 5.5,
p. 468]{BL} in the linear case for the class of commuting
measure-preserving transformations. The
Furstenberg--Bergelson--Leibman conjecture is a central open problem in pointwise ergodic
theory. Moreover, inequalities \eqref{eq:36} and \eqref{eq:37} are the
strongest possible quantitative forms of pointwise convergence.  
By taking $d=k=1$ and $P_1(m)=m$ in Theorem
\ref{thm:mainet}, we recover Birkhoff's and von Neumann's results
stated in Theorem \ref{birk}. Taking $d=k=1$ and $P_1\in\ZZ[{\rm m}]$
in Theorem \ref{thm:mainet}, we also recover Bourgain's polynomial
ergodic theorem from Theorem \ref{bourgain} above.

\item The mean ergodic theorem in (i) is a consequence of the dominated
convergence theorem combined with (ii) and (iii).  Each of the
conclusions from (iv), (v) and (vi) individually implies pointwise
convergence from (ii), as well as the maximal estimates from (iii). It
also follows from \eqref{estt1} that (v) implies (iv).  
Details about these implications can be easily derived from the properties
of oscillations, variations and jumps collected in Section
\ref{section:2}.

\item Sharp $r$-variational estimates \eqref{eq:7}
were obtained  for the first time in \cite{MST2}, with a conceptually new proof which
also works for other discrete operators with arithmetic features
\cite{MST1}. Not long afterwards, the ideas from \cite{MST2} were 
extended \cite{MSZ3} to establish uniform $\lambda$-jump estimates \eqref{eq:36}. Partial result for
$r$-variational estimates \eqref{eq:7} were obtained in \cite{Kr, zk, MT}.

\item It was observed in \cite{MST2} that \eqref{eq:7} and H\"{o}lder's inequality imply that 
for every
$p\in(1, \infty)$, for any $r>2$, every $f\in L^p(X)$ and every $J\in\ZZ_+$ one has
\begin{align*}
\qquad \qquad \sup_{I\in\mathfrak S_J(\ZZ_+) }\big\|O_{I, J}^2(A_{{M}; X, {\mathcal T}}^{\mathcal P}f: M\in\ZZ_+)\big\|_{L^p(X)}
\lesssim_{d,k,p, r, \deg \mathcal P} J^{\frac{1}{2}-\frac{1}{r}} \norm{f}_{L^p(X)},
\end{align*}
with the same implicit constant as in \eqref{eq:7} and so blows up as
$r$ tends to 2.  This inequality is a non-uniform version of
\eqref{eq:37} in the spirit of Bourgain's oscillation inequality
\eqref{eq:34}. However it was observed recently \cite{MSS} that
the methods from \cite{MST2, MSZ3} give the uniform oscillation
inequality in \eqref{eq:37}.  From this point of view (and from the
discussion above for martingales) inequality
\eqref{eq:37} can be thought of as an endpoint for \eqref{eq:7} at
$r=2$, though it is not an endpoint in the sense of inequality \eqref{estt1} below. It
would be nice to know whether it is possible (if at all) to use \eqref{eq:37} to recover \eqref{eq:7}.

\item Inequality \eqref{eq:37} is also a contribution to an interesting
problem from the early 1990's of Rosenblatt and Wierdl \cite[Problem
4.12, p. 80]{RW} about uniform estimates of oscillation inequalities
for ergodic averages. In \cite{jkrw} Jones, Kaufman, Rosenblatt and
Wierdl proved \eqref{eq:37} for the classical Birkhoff averages with
$d=k=1$ and $P_1(m)=m$, giving an affirmative answer to \cite[Problem
4.12, p. 80]{RW}. In \cite{MSS} it was shown that \cite[Problem 4.12,
p. 80]{RW} remains true  even for multidimensional polynomial
ergodic averages.

\item The proof of Theorem \ref{thm:mainet} is an elaboration of
methods developed in \cite{MST2, MSZ3} and also recently in
\cite{MSS}.  The main tools are the Hardy--Littlewood circle method
(major arcs estimates); Weyl's inequality (minor arcs estimates); the
Ionescu--Wainger multiplier theory (see \cite{IW, M10, MSZ3} and also \cite{Pierce},
\cite{TaoIW}); the Rademacher--Menshov argument (see for instance
\cite{MSZ2}) and the sampling principle of Magyar--Stein--Wainger (see
\cite{MSW} and also \cite{MSZ1}). The methods from \cite{M10, MST2,
MSZ1, MSZ2, MSZ3} were further developed by the first author in
collaboration with Krause and Tao \cite{KMT}, which resulted in
establishing pointwise convergence for the so-called bilinear
Furstenberg--Weiss ergodic averages.  This was a long-standing open
problem, which makes a significant contribution towards the Furstenberg--Bergelson--Leibman
conjecture \cite{BL}.

\end{enumerate}

\subsection{A multi-parameter variant of the Bellow and Furstenberg problem}
After completing \cite{B1, B2, B3}, Bourgain observed that the
Dunford--Zygmund theorem (see Theorem \ref{dz}) can be extended to the
polynomial setting at the expense of imposing that the 
measure-preserving transformations in Theorem
\ref{dz} commute. Bourgain's result can be formulated as follows. 

\begin{theorem}[Polynomial Dunford--Zygmund ergodic theorem]
\label{thm:main}
Let $d\in\ZZ_+$, and
$P_1, \ldots, P_d\in\ZZ[{\rm m}]$
such that $P_j(0)=0$ for $j\in[d]$ be given.  Let $(X, \calB(X), \mu)$
be a $\sigma$-finite measure space endowed with a family
$\calT=(T_1,\ldots, T_d)$ of commuting invertible measure-preserving
transformations on $X$.  Let $f\in L^p(X)$ for some
$1\le p\le \infty$, and for $M\in\ZZ_+^d$ let
$A_{M; X, \calT}^{P_1 ({\mathrm m}_1),\ldots, P_d ({\mathrm m}_d)}f = A_{M_1,\ldots, M_d;X,T_1,\ldots, T_d}^{P_1 ({\mathrm m}_1),\ldots, P_d ({\mathrm m}_d)}f$
be the polynomial ergodic  average defined  in \eqref{eq:31}.
\begin{itemize}
\item[(i)] \textit{(Mean ergodic theorem)} If $1<p<\infty$, then the averages
$A_{M; X, \calT}^{P_1 ({\mathrm m}_1),\ldots, P_d ({\mathrm m}_d)}f$ converge in $L^p(X)$ norm as $\min\{M_1,\ldots, M_d \}\to\infty$.

\item[(ii)] \textit{(Pointwise ergodic theorem)} If $1<p<\infty$, then the averages
$A_{M; X, \calT}^{P_1 ({\mathrm m}_1),\ldots, P_d ({\mathrm m}_d)}f$ converge pointwise almost everywhere on $X$ as $\min\{M_1,\ldots, M_d\}\to\infty$.

\item[(iii)] \textit{(Maximal ergodic theorem)}
If $1<p\le\infty$, then one has
\begin{align}
\label{eq:51}
\big\|\sup_{M\in\ZZ_+^d}|A_{M; X, \calT}^{P_1 ({\mathrm m}_1),\ldots, P_d ({\mathrm m}_d)}f|\big\|_{L^p(X)}\lesssim_{d,p, \deg P_1,\ldots, \deg P_d}\|f\|_{L^p(X)}.
\end{align}

\item[(iv)] \textit{(Oscillation ergodic theorem)}
If $1<p<\infty$,  then one has
\begin{align}
\label{eq:52}
\sup_{J\in\ZZ_+}\sup_{I\in\mathfrak S_J(\ZZ_+^d) }\big\|O_{I, J}^2(A_{M; X, \calT}^{P_1 ({\mathrm m}_1),\ldots, P_d ({\mathrm m}_d)}f: M\in\ZZ_+^d)\big\|_{L^p(X)}\lesssim_{d,p, \deg P_1,\ldots, \deg P_d}\|f\|_{L^p(X)}.
\end{align}
\end{itemize}
We refer to Section \ref{section:2} for the definitions of the sets $\mathfrak S_J(\ZZ_+^d)$, see \eqref{eq:2}, and the multi-parameter oscillations, see \eqref{eq:102}.
Moreover, the implicit constants in \eqref{eq:51} and \eqref{eq:52} can be taken to be independent of the
coefficients of the polynomials $P_1,\ldots, P_d$, depending only on $p$ and $\deg P_1,\ldots, \deg P_d$.
\end{theorem}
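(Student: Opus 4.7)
The plan is to exploit the fact that the polynomials $P_j$ depend only on the $j$-th indeterminate and the transformations $T_j$ commute, which yields the factorisation
\[
A_{M; X, \calT}^{P_1({\rm m}_1),\ldots,P_d({\rm m}_d)}f \;=\; A_{M_1;X,T_1}^{P_1}\circ A_{M_2;X,T_2}^{P_2}\circ\cdots\circ A_{M_d;X,T_d}^{P_d}f,
\]
each factor being a one-variable polynomial ergodic average covered by Theorem \ref{thm:mainet} with $d=k=1$, and the factors themselves commuting. Assertion (iii) then follows from a $d$-fold iteration of the one-parameter maximal estimate \eqref{eq:5}. Assertion (ii) follows by induction on $d$ in the Dunford--Zygmund spirit: one applies the one-parameter pointwise ergodic theorem in the $M_d$ coordinate, so that $\lim_{M_d\to\infty}A_{M_d;X,T_d}^{P_d}f=:f_1$ exists $\mu$-a.e.\ and satisfies $\|f_1\|_{L^p(X)}\le\|f\|_{L^p(X)}$, and iterates down to $M_1$; the strong maximal bound from (iii) secures the unrestricted limit $\min\{M_1,\ldots,M_d\}\to\infty$. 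Part (i) is then immediate from (ii) and (iii) by dominated convergence.

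The heart of the theorem is the uniform oscillation inequality (iv). Writing $A^{(l)}_N:=A_{N;X,T_l}^{P_l}$ and $I_j=(I_j^{(1)},\ldots,I_j^{(d)})$, commutativity of the factors yields the telescoping identity
\[
A_M f-A_{I_j}f \;=\; \sum_{l=1}^{d}A_{I_j^{(1)}}^{(1)}\cdots A_{I_j^{(l-1)}}^{(l-1)}\bigl(A_{M_l}^{(l)}-A_{I_j^{(l)}}^{(l)}\bigr)A_{M_{l+1}}^{(l+1)}\cdots A_{M_d}^{(d)}f,
\]
valid for $I_j\preceq M\prec I_{j+1}$. Taking absolute values, Minkowski's inequality in the outer $\ell^2_j$ of the oscillation seminorm splits $O^2_{I,J}$ into $d$ contributions. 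In the $l$-th one I would dominate $|A_{N}^{(i)} g|\le\calM^{(i)}|g|$ for every $i\ne l$, where $\calM^{(i)}:=\sup_{N\in\ZZ_+}A_N^{(i)}|\cdot|$ is the one-parameter maximal operator, which is bounded on $L^p(X)$ by \eqref{eq:5} of Theorem \ref{thm:mainet}. What remains inside the $\ell^2_j$-sum is a genuine one-parameter oscillation of $A^{(l)}$, taken along the projected sequence $(I_j^{(l)})_j$, applied to $F_l:=\calM^{(l+1)}\cdots\calM^{(d)}f$. Passing to the strictly increasing subsequence of $(I_j^{(l)})_j$ (the stagnant indices produce empty ranges of $M_l$ and contribute zero) and invoking the uniform one-parameter oscillation estimate \eqref{eq:37} of Theorem \ref{thm:mainet}(vi) bounds this contribution by $\|F_l\|_{L^p(X)}\lesssim\|f\|_{L^p(X)}$. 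Summing the $d$ contributions yields \eqref{eq:52} with constants uniform in $I$, $J$, and the coefficients of the $P_j$'s, since \eqref{eq:5} and \eqref{eq:37} are.

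The main obstacle is the bookkeeping in the telescoped oscillation: the inner supremum $\sup_{I_j\preceq M\prec I_{j+1}}$ must be distributed across the $d$ summands so that, coordinate by coordinate, one obtains a legitimate one-parameter oscillation supremum in the $l$-th coordinate while the remaining coordinates are absorbed pointwise into the maximal majorants $\calM^{(i)}$. This is possible because $|A_N^{(i)} g|\le\calM^{(i)}|g|$ pointwise uniformly in $N$, so that the absorption commutes with both the multi-parameter supremum and the $\ell^2_j$-summation; iterating these pointwise domination inside the $\ell^2_j$ norm only incurs the $L^p$-bounded cost afforded by \eqref{eq:5}. Once this bookkeeping is set up, the remainder is a black-box application of the one-parameter results already proved in Theorem \ref{thm:mainet}, so no new harmonic-analytic input beyond the one-parameter quantitative estimates is required.
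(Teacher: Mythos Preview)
Your overall plan for (iv) --- factorise into commuting one-parameter averages, telescope, and reduce to one-parameter oscillation bounds --- is exactly the paper's route (it is the content of Proposition~\ref{prop:8}). But there is a real gap in the bookkeeping step you yourself flag as the main obstacle.

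You assert that after dominating $|A_N^{(i)}g|\le\calM^{(i)}|g|$ for $i\ne l$, what remains in the $\ell^2_j$-sum is a one-parameter oscillation of $A^{(l)}$ applied to the \emph{fixed} function $F_l=\calM^{(l+1)}\cdots\calM^{(d)}f$. This is false: the difference $A_{M_l}^{(l)}-A_{I_j^{(l)}}^{(l)}$ is a signed linear operator, not a positive one, so one cannot replace its argument $A_{M_{l+1}}^{(l+1)}\cdots A_{M_d}^{(d)}f$ (which still depends on $M_{l+1},\ldots,M_d$) by the pointwise majorant $F_l$. The pointwise domination $|A_N^{(i)}g|\le\calM^{(i)}|g|$ is only legitimate for operators sitting \emph{outside} the absolute value of the difference, not for those feeding into its argument.

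The correct manoeuvre (which commutativity permits) is to move the difference so it acts directly on $f$; the product of the remaining $A^{(i)}$, $i\ne l$, then sits outside and may indeed be dominated by maximal operators. But now those maximal operators act on the $j$-dependent functions
\[
g_j:=\sup_{I_j^{(l)}\le M_l<I_{j+1}^{(l)}}\bigl|(A_{M_l}^{(l)}-A_{I_j^{(l)}}^{(l)})f\bigr|,
\]
and to pull them through the $\ell^2_j$-sum one needs a \emph{vector-valued} (Fefferman--Stein type) maximal inequality for each $\calM^{(i)}$, not the scalar bound \eqref{eq:5} you invoke. This is precisely hypothesis \eqref{eq:29} in the paper's Proposition~\ref{prop:8}, supplied for the polynomial averages by \cite[Theorem~C]{MST1}. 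With that additional black box your argument closes and coincides with the paper's; without it, it does not.
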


We now give some remarks about Theorem \ref{thm:main}.
\begin{enumerate}[label*={\arabic*}.]

\item Theorem \ref{thm:main}(i)-(iii) is attributed to Bourgain,
though it has never been published. The first and third authors
learned about this result from Bourgain in October 2016, when they started
to work with Bourgain and Stein on some aspects of multi-parameter
ergodic theory \cite{BMSWer}.

\item In this paper we prove Theorem \ref{thm:main} using a general abstract principle, see Proposition \ref{prop:8}
in Section \ref{section:4}. In contrast to Bourgain's original
observation our proof of Theorem \ref{thm:main} relies on uniform bounds for multi-parameter
oscillation inequalities.

\item Theorem \ref{thm:main}(iv) with linear polynomials
$P_1(m)=\ldots=P_d(m)=m$ was established in \cite{JRW}, where it was
essential that $\calT=(T_1,\ldots, T_d)$ is a commuting family of
measure-preserving transformations on $X$. It is straightforward to see that
(iv) implies (iii) by \eqref{eq:9}, as well as (ii) by appealing to
Proposition \ref{prop:4}. Using the dominated convergence theorem with
(ii) and (iii) we also obtain (i). So it suffices to prove
\eqref{eq:52}, which we do in Section \ref{section:4}.

\item To prove Theorem \ref{thm:main} it is essential to note that
\begin{align}
\label{eq:50}
A_{M; X, \calT}^{P_1 ({\mathrm m}_1),\ldots, P_d ({\mathrm m}_d)}f=A_{M_1,\ldots, M_d; X, T_1,\ldots, T_d}^{P_1 ({\mathrm m}_1),\ldots, P_d ({\mathrm m}_d)}f=A_{M_1; X, T_1}^{P_1({\mathrm m}_1)}\circ\ldots\circ A_{M_d; X, T_d}^{P_d({\mathrm m}_d)}f,
\end{align}
where the latter averages (defined as in \eqref{eq:8}) commute as long
as the family $\calT=(T_1,\ldots, T_d)$ is commuting. Using identity
\eqref{eq:50} and iterating appropriately \eqref{eq:37} with $k=d=1$
we will be able to derive \eqref{eq:52}. We refer to Section
\ref{section:4} for details.
\end{enumerate}

Theorem \ref{thm:main} can be thought of as a simple case of a multi-parameter variant of 
the Bellow and Furstenberg problem, which is a central open problem in modern ergodic theory
and  can be subsumed under the following conjecture:
\begin{conjecture}
\label{con:1}
Let $d, k\in\ZZ_+$ be given and let $(X, \mathcal B(X), \mu)$ be a probability measure space endowed with a family $\calT=(T_1,\ldots, T_d)$  of invertible commuting  measure-preserving transformations on $X$. Assume that $\calP=(P_1,\ldots, P_d) \subset\ZZ[{\rm m}_1, \ldots, {\rm m}_k]$ such that $P_j(0)=0$ for $j\in[d]$ are given. Then for any $f\in L^{\infty}(X)$ the multi-parameter polynomial averages $A_{M; X, \calT}^{\calP}f(x)=A_{M_1,\ldots, M_k;X,T_1,\ldots, T_d}^{P_1,\ldots, P_d} f(x)$ defined in \eqref{eq:31} converge for $\mu$-almost every $x\in X$, as  $\min\{M_1,\ldots, M_k\}\to\infty$.
\end{conjecture}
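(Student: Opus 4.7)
The plan is to follow the same architecture that succeeded for Theorem \ref{thm:mainet} and Theorem \ref{thm:main}, namely to reduce Conjecture \ref{con:1} to a uniform multi-parameter oscillation estimate
\begin{align*}
\sup_{J\in\ZZ_+}\sup_{I\in\mathfrak S_J(\ZZ_+^k)}\big\|O_{I,J}^2(A_{M;X,\calT}^{\calP}f:M\in\ZZ_+^k)\big\|_{L^p(X)}
\lesssim_{d,k,p,\deg\calP}\|f\|_{L^p(X)}
\end{align*}
for some $p\in(1,\infty)$. By Proposition \ref{prop:4}, together with the fact that oscillations dominate the corresponding multi-parameter maximal function, such a bound would immediately yield unrestricted pointwise convergence for every $f\in L^p(X)$, and in particular for $f\in L^{\infty}(X)$ on probability spaces. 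It therefore suffices to concentrate on the oscillation estimate itself.

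First I would invoke the Calder\'on transference principle to reduce the problem on the abstract space $(X,\mathcal B(X),\mu,\calT)$ to the model integer shift system on $\ZZ^d$, where the averages $A_{M;\ZZ^d}^{\calP}$ become convolutions with kernels whose Fourier transforms are the exponential sums $\EE_{m\in Q_M}\ex(-\xi\cdot(P_1(m),\ldots,P_d(m)))$ on $\TT^d$. Decomposing these multipliers through the Hardy--Littlewood circle method splits the analysis into a minor-arc part, to be handled by a multi-parameter Weyl-type inequality extracting cancellation in the full $k$-dimensional box $Q_M$ uniformly in $\xi$, and a major-arc part, where one replaces the true multiplier by an arithmetic rational-frequency model coupled to a continuous multi-parameter ``Ionescu--Wainger'' multiplier on $\RR^d$. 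The sampling principle of Magyar--Stein--Wainger, promoted to a $k$-parameter framework, should then allow one to decouple the arithmetic and continuous pieces on each rational block, while a multi-parameter Rademacher--Menshov device converts dyadic square-function estimates into uniform oscillation bounds along arbitrary exhausting sequences $I\in\mathfrak S_J(\ZZ_+^k)$.

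The principal obstruction is the absence of the factorisation \eqref{eq:50}: as soon as a single polynomial $P_j$ genuinely depends on two or more of the variables $m_1,\ldots,m_k$, the average $A_{M;X,\calT}^{\calP}$ ceases to be a composition of commuting one-parameter operators, so the iterative reduction to Theorem \ref{thm:mainet} that drives Theorem \ref{thm:main} collapses. This is exactly the barrier that places Conjecture \ref{con:1} close to the Furstenberg--Bergelson--Leibman conjecture, and circumventing it would require genuinely $k$-parameter analogues of Weyl's inequality and of the Ionescu--Wainger multiplier calculus rather than any product-type iteration of the known one-parameter tools. A further difficulty is that unrestricted convergence in $M\in\ZZ_+^k$ forces the oscillation seminorm to be controlled uniformly in arbitrary strictly increasing sequences in the partially ordered set $\ZZ_+^k$, which is strictly stronger than the restricted convergence along a single scale; handling this in the major-arc regime seems to demand a $k$-fold Rademacher--Menshov square function matched to the product structure of the rational-frequency model.

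I would approach the problem by first attempting the smallest genuinely new case, $k=2$ with two commuting transformations and quadratic polynomials containing a mixed term $m_1 m_2$, since already here the minor-arc analysis and the $2$-parameter arithmetic decoupling encapsulate the essential new difficulty, while the single-variable projections recover Theorems \ref{thm:mainet} and \ref{thm:main} as consistency checks. A positive resolution of this model case, combined with the uniform multi-parameter oscillation machinery sketched above, would provide a template for the general conjecture; conversely, any robust obstruction uncovered there should suggest where the Furstenberg--Bergelson--Leibman barrier enters in its sharpest form.
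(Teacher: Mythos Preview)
The statement you are attempting to prove is labelled in the paper as a \emph{conjecture}, not a theorem, and the paper does not give a proof of it. The surrounding discussion makes this explicit: the authors state that ``at this moment it is not clear whether Conjecture \ref{con:1} is true in full generality,'' and they only establish special cases --- the diagonal one-parameter situation (Theorem \ref{thm:mainet}), the factorisable multi-parameter situation (Theorem \ref{thm:main}), and the mixed-polynomial case of \eqref{eq:53} treated in the separate paper \cite{BMSWer}. There is therefore no ``paper's own proof'' against which to compare your attempt.

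Your proposal is not a proof but a research outline, and to your credit you say as much: you correctly identify that the factorisation \eqref{eq:50} is unavailable once some $P_j$ genuinely mixes variables, and that this is precisely what separates the conjecture from the theorems the paper does prove. The architecture you sketch --- Calder\'on transference, circle-method splitting, multi-parameter Ionescu--Wainger and Magyar--Stein--Wainger machinery, Rademacher--Menshov square functions --- is indeed the toolkit used in \cite{BMSWer} for the partial results toward Conjecture \ref{con:1}, but the paper is explicit that those methods do not currently cover all polynomials, and your proposal gives no new mechanism to overcome the obstruction you name. In particular, the ``genuinely $k$-parameter Weyl inequality'' and the ``$k$-parameter arithmetic decoupling'' you invoke are not established results that can simply be cited; producing them is the heart of the open problem. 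As written, then, this is a reasonable description of where the difficulty lies and a sensible suggestion to attack the $k=2$ case with a single mixed term first, but it should be presented as a plan of attack on an open conjecture, not as a proof.
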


A few remarks about this conjecture, its history, and the current state of the art are in order.

\begin{enumerate}[label*={\arabic*}.]
\item As we have seen above the case $d=k=1$ of Conjecture \ref{con:1} with $P_1(m)=m$ follows from Birkhoff's ergodic theorem, see Theorem \ref{birk}.  The case $d=k=1$  of Conjecture \ref{con:1} with arbitrary polynomials $P_1\in\ZZ[\rm n]$
was the famous open problem of Bellow \cite{Bel} and Furstenberg \cite{F} and solved by Bourgain \cite{B1, B2, B3} in the mid 1980's, see Theorem \ref{bourgain}. The general case $d,k\in\ZZ_+$ of Conjecture \ref{con:1} with arbitrary polynomials $P_1,\ldots, P_d\in\ZZ[{\rm m}_1, \ldots, {\rm m}_k]$ in the diagonal setting $M_1=\ldots=M_k$, that is, the multi-dimensional one-parameter setting, follows from Theorem \ref{thm:mainet}.

\item A genuinely multi-parameter  case $d=k\ge2$  of Conjecture \ref{con:1} for averages \eqref{eq:31} with $P_{j}(m_1, \ldots, m_{d})=P_j(m_j)$, where $P_j\in\ZZ[{\rm m}_j]$ for $j\in[d]$ follows from Theorem \ref{thm:main},
which extends the case of linear polynomials $P_1(m)=\ldots=P_d(m)=m$  established independently by Dunford \cite{D} and Zygmund \cite{Z} in the early 1950's, see Theorem \ref{dz}.  

\item Thanks to the product structure of \eqref{eq:50} Theorem~\ref{dz}, as well as Theorem~\ref{thm:main}, have relatively simple one-parameter
proofs, which are based on iterative applications of Theorem~\ref{birk}
and Theorem~\ref{thm:mainet}, respectively. This is explained in
Proposition \ref{prop:8} below.  However, the
situation is dramatically different when orbits in \eqref{eq:31}
are defined along genuinely $k$-variate polynomials
$P_1,\ldots, P_d\in\ZZ[{\rm m}_1, \ldots, {\rm m}_k]$ since then we lose
the product structure \eqref{eq:50}. This can be illustrated by
considering averages \eqref{eq:31} for $d=1$, $k=2$ with, let us say,
$P_1(m_1, m_2)=m_1^2m_2^3$. Then Conjecture \ref{con:1} becomes 
challenging. Surprisingly, even in this simple case, it seems that there is no simple way (like
changing variables or interpreting the average from \eqref{eq:31}
as a composition of simpler one-parameter averages as in
\eqref{eq:50}) that would help us reduce the
matter to the setup where pointwise convergence is known. 
This was one of the motivations leading to Conjecture \ref{con:1}.

\item The Dunford--Zygmund theorem (see Theorem \ref{dz} above) was
originally proved for not necessarily commuting,
measure-preserving transformations $\calT=(T_1,\ldots, T_d)$ on $X$. However, it is well known
for instance from the Bergelson--Leibman paper \cite{BL}  that the 
commutation assumption imposed on the family $T_1,\ldots, T_d:X\to X$
in \eqref{eq:31} is essential in order to have an ergodic theorem if
$\deg P_j\ge2$ for at least one $j\in[d]$ and $d\ge2$. Even in the
one-parameter case (assuming $k=1$) in \eqref{eq:31} an ergodic
theorem may fail. The question to what extent one can relax
commutation relations among $T_1,\ldots, T_d$ in \eqref{eq:31}, even
in the one-parameter case, is very intriguing. This also motivates the desire to understand
Conjecture \ref{con:1} in the commutative setting first, as it is unclear whether 
Conjecture \ref{con:1} is true for all polynomials  $P_1,\ldots, P_d\in\ZZ[{\rm m}_1, \ldots, {\rm m}_k]$.

\item With respect to the noncommutative setting, we
mention that recently the first and second authors with Ionescu and
Magyar \cite{IMMS} established Conjecture \ref{con:1} with $k=1$, 
$d\in\ZZ_+$ and arbitrary polynomials
$P_1,\ldots, P_d\in\ZZ[\rm m]$ in the diagonal
nilpotent setting, i.e. one-parameter and multi-dimensional, when $\calT=(T_1,\ldots, T_d)$ is a family of
invertible measure-preserving transformations of a $\sigma$-finite
measure space $(X, \mathcal B(X), \mu)$ that generates a nilpotent
group of step two. In view of the Bergelson--Leibman paper \cite{BL}, the
nilpotent setting is probably the most general setting where
Conjecture \ref{con:1} might be true, at least in the one-parameter case.

\item We finally mention that progress towards establishing
Conjecture \ref{con:1} was recently made by the first and third
authors in collaboration with Bourgain and Stein \cite{BMSWer}. This
conjecture was verified for any integer $d\ge2$ with $k=d-1$ for
averages \eqref{eq:31} with polynomials
\begin{align}
\label{eq:53}
\begin{split}
P_{j}({\rm m}_1, \ldots, {\rm m}_{d-1})&={\rm m}_j \quad \text{for}\quad  j\in[d-1]; \quad \text{and} \\
P_{d}({\rm m}_1, \ldots, {\rm m}_{d-1})&=P({\rm m}_1, \ldots, {\rm m}_{d-1}),
\end{split}
\end{align}
whenever $P\in\ZZ[{\rm m}_1,\ldots,  {\rm m}_{d-1}]$ is a polynomial such that
\begin{align*}
\qquad P(0,\ldots, 0)=\partial_1P(0,\ldots, 0)=\ldots=\partial_{d-1}P(0,\ldots, 0)=0,
\end{align*}
which has partial degrees (as a polynomial of the variable ${\rm m}_i$
for any $i\in[d-1]$) at least two. Furthermore, it follows from \cite{BMSWer} that for any $P\in\ZZ[{\rm m}_1,\ldots, {\rm m}_{d}]$ the following averages
\begin{align}
\label{eq:3}
\qquad \qquad A_{{M}; X, {\mathcal T}}^{P}f(x): =  \EE_{(m_1,\ldots, m_{d})\in Q_{M}}f(T^{P(m_1,\ldots, m_{d})}x), \qquad x\in X,
\end{align}
where  $M=(M_1,\ldots, M_{d})\in\ZZ_+^{d},$
converge almost everywhere on $X$ as $\min\{M_1,\ldots, M_{d}\}\to\infty$. 
In fact, Conjecture \ref{con:1} was originally formulated with averages \eqref{eq:3}, the authors learned about this from Jean Bourgain in a private communication in October 2016. The proof from \cite{BMSWer}
developed new methods from Fourier analysis and number
theory.  Even though the averages \eqref{eq:31} with polynomials from
\eqref{eq:53} share a lot of difficulties that arise in the general
case, there are some cases that are not covered by the methods
developed in \cite{BMSWer}. At this moment it is not clear whether
Conjecture \ref{con:1} is true in full generality. The work in 
\cite{BMSWer} is a significant step towards understanding
Conjecture \ref{con:1} that sheds new light on the general case and will
either lead to its full resolution or to a counterexample.  The  authors plan to investigate this question in the near future.

\end{enumerate}

\subsection{Overview of the paper}
In this paper we prove an abstract principle for the so-called
projective operators, see Theorem \ref{thm:p} in Section
\ref{section:3}, which allows us to deal with one-parameter
oscillation inequalities in a fairly unified way. As a consequence of
Theorem \ref{thm:p} we give a simple proof of Jones, Kaufman,
Rosenblatt and Wierdl \cite[Theorem 6.4, p. 930]{jkrw} oscillation
inequality for martingales, see Proposition \ref{prop:1}, then we
prove oscillation inequalities for smooth bumps, see Proposition
\ref{prop:2} and Theorem \ref{thm:1}. Further, we discuss oscillation
estimates for projection operators corresponding to orthonormal
systems in Hilbert spaces, see Proposition \ref{prop:6}, and finally we
obtain new oscillations inequalities for the Carleson operator,
see Proposition \ref{prop:car}. In Section \ref{section:4} we
build a multi-parameter theory of oscillation estimates, see
Proposition \ref{prop:8} and Corollary \ref{cor:2}. As an application of our method, we give a simple
proof of Theorem \ref{thm:main}.

This paper can be viewed as a fairly systematic treatment of
oscillation estimates in the one-parameter as well as multi-parameter
settings in ergodic theory and analysis.  In the multi-parameter setting, oscillation semi-norms
seem to be the only viable
tool that allows us to handle efficiently multi-parameter pointwise
convergence problems. This is especially the case in \cite{BMSWer}
where operators with arithmetic features were studied.  It also
contrasts sharply with the one-parameter setting, where we
have a variety of available tools to handle pointwise convergence
problems: including oscillations, variations or jumps, see \cite{JSW,
MST2} and the references given there.

\section{Notation and useful tools}\label{section:2}
We now set some notation that will be used throughout the paper. Basic
properties of one-parameter as well as multi-parameter $r$-oscillation
semi-norms, $r$-variation semi-norms and $\lambda$-jump counting functions will
be also gathered here. We borrow notation from \cite[Section 2]{BMSWer} and \cite[Section 2]{MSS}.

\subsection{Basic notation}
Let $\ZZ_+:=\{1, 2, \ldots\}$, $\NN:=\{0,1,2,\ldots\}$ and
$\RR_+:=(0, \infty)$.  For $d\in\ZZ_+$ the sets $\ZZ^d$, $\RR^d$,
$\CC^d$ and $\TT^d:=\RR^d/\ZZ^d$ have standard meaning. We will also
consider the set of dyadic numbers $\DD:=\{2^n: n\in\ZZ\}$.  For any
$x\in\RR$ we define the floor function
\[
\lfloor x \rfloor: = \max\{ n \in \ZZ : n \le x \}.
\]
For $x, y\in\RR$ let $x \wedge y := \min\{x,y\}$ and $x \vee y := \max\{x,y\}$.  
For every $N\in\RR_+$ and $\AA\subseteq\RR$  define
\[
[N]:=(0, N]\cap\ZZ=\{1, \ldots, \lfloor N\rfloor\},
\]
as well as
\begin{align*}
\AA_{\le N}:= [0, N]\cap\AA,\ \: \quad &\text{ and } \quad
\AA_{< N}:= [0, N)\cap\AA,\\
\AA_{\ge N}:= [N, \infty)\cap\AA, \quad &\text{ and } \quad
\AA_{> N}:= (N, \infty)\cap\AA.
\end{align*}

We use $\ind{A}$ to denote the indicator function of a set $A$. If $S$ 
is a statement we write $\ind{S}$ to denote its indicator, equal to $1$
if $S$ is true and $0$ if $S$ is false. For instance $\ind{A}(x)=\ind{x\in A}$.

For two nonnegative quantities $A, B$ we write $A \lesssim B$ if there
is an absolute constant $C>0$ such that $A\le CB$, however $C>0$ may
change from occurrence to occurrence. We will write $A \simeq B$ when
$A \lesssim B\lesssim A$.  We will write $\lesssim_{\delta}$ or
$\simeq_{\delta}$ to emphasize that the implicit constant depends on
$\delta$. For two functions $f:X\to \CC$ and $g:X\to [0, \infty)$,
write $f = O(g)$ if there exists $C>0$ such that $|f(x)| \le C g(x)$
for all $x\in X$. We will also write $f = O_{\delta}(g)$ if the
implicit constant depends on $\delta$.

\subsection{Euclidean spaces} The standard inner product, the
corresponding Euclidean norm, and the maximum norm on $\RR^d$ are
denoted respectively, for any $x=(x_1,\ldots, x_d)$,
$\xi=(\xi_1, \ldots, \xi_d)\in\RR^d$, by
\begin{align*}
x\cdot\xi:=\sum_{k=1}^dx_k\xi_k, \qquad \text{ and } \qquad
\abs{x}:=\abs{x}_2:=\sqrt{\ipr{x}{x}}, \qquad \text{ and } \qquad |x|_{\infty}:=\max_{k\in[d]}|x_k|.
\end{align*}

\subsection{Function spaces}
Throughout this paper all vector spaces will be defined over $\CC$.
For a continuous linear map $T : B_1 \to B_2$ between two normed
vector spaces $B_1$ and $B_2$, its operator norm will be denoted by
$\|T\|_{B_1 \to B_2}$.

The triple $(X, \mathcal B(X), \mu)$ denotes a measure space $X$ with
a $\sigma$-algebra $\mathcal B(X)$ and a $\sigma$-finite measure
$\mu$.  The space of all $\mu$-measurable functions $f:X\to\CC$ will
be denoted by $L^0(X)$.  The space of all functions in $L^0(X)$ whose
modulus is integrable with $p$-th power is denoted by $L^p(X)$ for
$p\in(0, \infty)$, whereas $L^{\infty}(X)$ denotes the space of all
essentially bounded functions in $L^0(X)$.  These notions can be
extended to functions taking values in a separable normed vector space
$(B, \|\cdot\|_B)$, for instance
\begin{align*}
L^{p}(X;B)
:=\big\{F\in L^0(X;B):\|F\|_{L^{p}(X;B)} \coloneqq \left\|\|F\|_B\right\|_{L^{p}(X)}<\infty\big\},
\end{align*}
where $L^0(X;B)$ denotes\footnote{Note that there are various definitions of $L^0(X;B)$ in the literature.} the space of measurable functions from $X$ to
$B$ (up to almost everywhere equivalence). 
For any $p\in[1,\infty]$ we define a weak-$L^p$ space of measurable functions on $X$ by setting
\begin{equation*}
    L^{p,\infty}(X):=\{f:X\to\CC\colon\norm{f}_{L^{p,\infty}(X)}<\infty\},
\end{equation*}
where for any $p\in[1,\infty)$ we have
\begin{align*}
\norm{f}_{L^{p,\infty}(X)}:=\sup_{\lambda>0}\lambda\mu(\{x\in X:|f(x)|>\lambda\})^{1/p},
\qquad\text{\and}\qquad
\norm{f}_{L^{\infty,\infty}(X)}:=\norm{f}_{L^{\infty}(X)}.
\end{align*}

In our case we will mainly take  $X=\RR^d$ or
$X=\TT^d$ equipped with the Lebesgue measure, and   $X=\ZZ^d$ endowed with the
counting measure. If $X$ is endowed with a counting measure we will
abbreviate $L^p(X)$ to $\ell^p(X)$ and $L^p(X; B)$ to $\ell^p(X; B)$ and $L^{p,\infty}(X)$ to $\ell^{p,\infty}(X)$.

\subsection{Fourier transform}  
We will use the convention that $\ex(z)=e^{2\pi {\bm i} z}$ for
every $z\in\CC$, where ${\bm i}^2=-1$. Let $\calF_{\RR^d}$ denote the Fourier transform on $\RR^d$ defined for
any $f \in L^1(\RR^d)$ and for any $\xi\in\RR^d$ as
\begin{align*}
\calF_{\RR^d} f(\xi) := \int_{\RR^d} f(x) \ex(x\cdot\xi) {\rm d}x.
\end{align*}
We can also consider the Fourier transform for finite Borel measures $\sigma$ on $\RR^d$. 
If $f \in \ell^1(\ZZ^d)$ we define the discrete Fourier
transform (Fourier series) $\calF_{\ZZ^d}$, for any $\xi\in \TT^d$, by setting
\begin{align*}
\calF_{\ZZ^d}f(\xi): = \sum_{x \in \ZZ^d} f(x) \ex(x\cdot\xi).
\end{align*}
Sometimes we shall abbreviate $\calF_{\ZZ^d}f$ or $\calF_{\RR^d}f$ to $\hat{f}$, if the context will be clear.

Let $\GG=\RR^d$ or $\GG=\ZZ^d$. It is well known that their corresponding dual groups are $\GG^*=(\RR^d)^*=\RR^d$ or $\GG^*=(\ZZ^d)^*=\TT^d$ respectively.
For any bounded function $\mathfrak m: \GG^*\to\CC$ and a test function $f:\GG\to\CC$ we define the Fourier multiplier operator  by 
\begin{align}
\label{eq:1}
T_{\GG}[\mathfrak m]f(x):=\int_{\GG^*}\ex(-\xi\cdot x)\mathfrak m(\xi)\calF_{\GG}f(\xi){\rm d}\xi, \quad \text{ for } \quad x\in\GG.
\end{align}
One may think that $f:\GG\to\CC$ is a compactly supported function on $\GG$ (and smooth if $\GG=\RR^d$) or any other function for which \eqref{eq:1} makes sense.

\subsection{Littlewood--Paley theory}\label{lp} Often we will control oscillation and variation semi-norms by certain
square functions of the form
\[
S(f)(x) := \Bigl(\sum_{k\in {\mathbb Z}} |\sigma_k*f(x)|^2 \Bigr)^{1/2},
\]
where $(\sigma_k)_{k \in \ZZ}$ is a sequence of Borel measures on ${\mathbb R}^d$ with bounded total variation
satisfying $|{\widehat{\sigma_k}}(\xi)| \le C \min\{|a_{k+1} \xi|^{\alpha}, |a_k \xi|^{-\alpha}\}$ for some
$\alpha>0$ and all $k\in \ZZ$. Here $\inf_{k\in\ZZ} a_{k+1}/a_k > 1$.
What we call {\it standard Littlewood--Paley arguments} sometimes refer to the arguments developed
in the seminal paper \cite{DR}. In particular, Theorem B in \cite{DR} implies that the square function $S$
satisfies $L^p$ bounds $\|S(f)\|_{L^p} \le C_p \|f\|_{L^p}$ for all $p\in(1, \infty)$ whenever the corresponding maximal
function $\sigma^{*}$ associated to the measures $(\sigma_k)_{k \in \ZZ}$ satisfies the same $L^p$ bounds.

At one point we will use a powerful square function bound of Rubio de Francia associated to any pairwise disjoint
collection of
intervals $(I_j : j \in \ZZ)$ on $\RR$. It states
$$
\Big\|\Bigl(\sum_{j\in\ZZ} |T_{\RR}[\ind{I_j}] f|^2 \Bigr)^{1/2} \Big\|_{L^p({\RR})} \ \lesssim \ \|f\|_{L^p(\RR)}
$$
whenever $p\in [2, \infty)$. See \cite[Theorem 1.2]{RdF}.

\subsection{Coordinatewise order $\preceq$} For any
$x=(x_1,\ldots, x_k)\in\RR^k$ and $y=(y_1,\ldots, y_k)\in\RR^k$ we say
$x\preceq y$ if an only if $x_i\le y_i$ for each $i\in[k]$.  We also
write $x\prec y$ if an only if $x\preceq y$ and $x\neq y$, and
$x\prec_{\rm s} y$ if an only if $x_i< y_i$ for each $i\in[k]$. Let
$\II\subseteq \RR^k$ be an index set such that $\# \II\ge2$ and for
every $J\in\ZZ_+\cup\{\infty\}$ define the set
\begin{align}
\label{eq:2}
\mathfrak S_J(\II):
=
\Set[\big]{(t_i:i\in\NN_{\le J})\subseteq \II}{t_{0}\prec_{\rm s}
t_{1}\prec_{\rm s}\ldots \prec_{\rm s}t_{J}},
\end{align}
where $\NN_{\le \infty}:=\NN$.
In other
words, $\mathfrak S_J(\II)$ is the family of all strictly increasing
sequences (with respect to the coordinatewise order) of length $J+1$
taking their values in the set $\II$.

\subsection{Oscillation semi-norms}
Let $\II\subseteq \RR^k$ be an index set such that $\#{\II}\ge2$. Let $(\mathfrak a_{t}(x): t\in\II)$ be a $k$-parameter
family of complex-valued measurable functions defined on $X$. For any
$\JJ\subseteq \II$, any $1\le r<\infty$ and a sequence
$I=(I_i : i\in\NN_{\le J}) \in \mathfrak S_J(\II)$ the multi-parameter
$r$-oscillation seminorm is defined by
\begin{align}
\label{eq:102}
O_{I, J}^r(\mathfrak a_{t}(x): t \in \JJ):=
\Big(\sum_{j=0}^{J-1}\sup_{t\in \BB[I_j]\cap\JJ}
\abs{\mathfrak a_{t}(x) - \mathfrak a_{I_j}(x)}^r\Big)^{1/r},
\end{align}
where
$\BB[I_i]:=[I_{i1}, I_{(i+1)1})\times\ldots\times[I_{ik}, I_{(i+1)k})$
is a box determined by the element $I_i=(I_{i1}, \ldots, I_{ik})$ of
the sequence $I\in \mathfrak S_J(\II)$.  In order to avoid problems
with measurability we always assume that
$\II\ni t\mapsto \mathfrak a_{t}(x)\in\CC$ is continuous for
$\mu$-almost every $x\in X$, or $\JJ$ is countable. We also use the
convention that the supremum taken over the empty set is zero.  
\begin{remark}
\label{rem:1}
Let $1\le r<\infty$.
Some remarks  are in order.
\begin{enumerate}[label*={\arabic*}.]
\item Clearly $O_{I, J}^r(\mathfrak a_{t}: t \in \JJ)$ defines a semi-norm.

\item  Let $\II\subseteq \RR^k$ be an index set such that $\#{\II}\ge2$, and let $\JJ_1, \JJ_2\subseteq \II$ be disjoint. Then for any family $(\mathfrak a_t:t\in\II)\subseteq \CC$,  any $J\in\ZZ_+$ and any $I\in\mathfrak S_J(\II)$ one has
\begin{align*}
O_{I, J}^r(\mathfrak a_{t}: t\in\JJ_1\cup\JJ_2)
\le O_{I, J}^r(\mathfrak a_{t}: t\in\JJ_1)
+O_{I, J}^r(\mathfrak a_{t}: t\in\JJ_2).
\end{align*}

\item Let $\II\subseteq \RR^k$ be a countable index set such that $\#{\II}\ge2$ and $\JJ\subseteq \II$. Then 
 for any family $(\mathfrak a_t:t\in\II)\subseteq \CC$, any  $J\in\ZZ_+$, any  $I\in\mathfrak S_J(\II)$  one has
 \begin{align}
\label{eq:4}
 O_{I, J}^r(\mathfrak a_{t}: t \in \JJ)\lesssim \Big(\sum_{t\in\II}|\mathfrak a_{t}|^r\Big)^{1/r}.
 \end{align}

\item Let $(\mathfrak a_t:t\in\II^k)$ be a
$k$-parameter family of measurable functions on $X$. For any $\II\subseteq\RR$
with $\#{\II}\ge2$ and any sequence
$I=(I_i : i\in\NN_{\le J}) \in \mathfrak S_J(\II)$ of length
$J\in\ZZ_+\cup\{\infty\}$ we define the diagonal sequence
$\bar{I}=(\bar{I}_i : i\in\NN_{\le J}) \in \mathfrak S_J(\II^k)$ by
setting $\bar{I}_i=(I_i,\ldots,I_i)\in\II^k$ for each
$i\in\NN_{\le J}$. Then for any $p\in[1, \infty]$  and for any $\JJ\subseteq \II^k$ one has
\begin{align*}
\sup_{I\in \mathfrak S_J(\II)}\norm[\big]{O_{\bar{I}, J}^r(\mathfrak a_{t}: t \in \JJ)}_{L^p(X)}
\le
\sup_{I\in \mathfrak S_J(\II^k)}\norm[\big]{O_{I, J}^r(\mathfrak a_{t}: t \in \JJ)}_{L^p(X)}.
\end{align*}

\end{enumerate}
\end{remark}

We now show that oscillation semi-norms
always dominate maximal functions. 
\begin{proposition}
\label{prop:5}
Assume that $k\in\ZZ_+$, $\II\subseteq \RR$ be such that $\#{\II}\ge2$ and let $(\mathfrak a_{t}: t\in\II^k)$
be a $k$-parameter family of measurable functions on $X$. Then
for every $p\in[1, \infty]$ and $r\in[1, \infty)$ we have
\begin{align}
\label{eq:9}
\norm[\big]{\sup_{t \in (\II\setminus\{\sup\II\})^k}\abs{\mathfrak a_{t}}}_{L^p(X)}
\le \sup_{t \in \II^k}\norm{\mathfrak a_{t}}_{L^p(X)}
+\sup_{J\in\ZZ_+}\sup_{I\in \mathfrak S_J(\II)}\norm[\big]{O_{\bar{I}, J}^r(\mathfrak a_{t}: t \in \II^k)}_{L^p(X)},
\end{align}
where $\bar{I}\in\mathfrak S_J(\II^k)$ is the diagonal sequence
corresponding to a sequence $I\in \mathfrak S_J(\II)$ as in Remark~\ref{rem:1}.
\end{proposition}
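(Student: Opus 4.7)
The plan is to reduce the estimate to a finite set of points $t_1,\dots,t_n\in(\II\setminus\{\sup\II\})^k$, handle those via a one-step oscillation, and then pass to the full supremum by monotone convergence (using countability of $\II$, or by continuity, reducing to a countable dense subset of $\II^k$).

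First I would fix a finite subset $F\subseteq (\II\setminus\{\sup\II\})^k$ and form a two-term diagonal sequence as follows. Let $s_0:=\min\{t^{(j)}: t\in F,\ j\in[k]\}$, which lies in $\II$ since it equals one of the coordinates of some $t\in F$. Since every coordinate of every $t\in F$ is strictly less than $\sup\II$, the finite maximum $T:=\max\{t^{(j)}: t\in F,\ j\in[k]\}$ also satisfies $T<\sup\II$; by the definition of supremum (whether or not $\sup\II\in\II$), we may pick $s^*\in\II$ with $s^*>T$. Setting $I:=(s_0,s^*)\in\mathfrak S_1(\II)$, the diagonal sequence is $\bar I_0=(s_0,\dots,s_0)$, $\bar I_1=(s^*,\dots,s^*)$, and the single box $\BB[\bar I_0]=[s_0,s^*)^k$ contains every $t\in F$.

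For the $J=1$ oscillation, the defining sum collapses to a single term, so
\begin{equation*}
O_{\bar I,1}^r(\mathfrak a_t:t\in\II^k)=\sup_{t\in \BB[\bar I_0]\cap\II^k}|\mathfrak a_t-\mathfrak a_{\bar I_0}|.
\end{equation*}
Consequently, for each $t\in F$, the triangle inequality gives
\begin{equation*}
|\mathfrak a_t|\le |\mathfrak a_{\bar I_0}|+|\mathfrak a_t-\mathfrak a_{\bar I_0}|\le |\mathfrak a_{\bar I_0}|+O_{\bar I,1}^r(\mathfrak a_t:t\in\II^k).
\end{equation*}
Taking the pointwise maximum over $t\in F$ and then the $L^p(X)$ norm, and noting $\bar I_0\in\II^k$, I obtain
\begin{equation*}
\bigl\|\max_{t\in F}|\mathfrak a_t|\bigr\|_{L^p(X)}\le \sup_{s\in\II^k}\|\mathfrak a_s\|_{L^p(X)}+\sup_{J\in\ZZ_+}\sup_{I\in\mathfrak S_J(\II)}\bigl\|O_{\bar I,J}^r(\mathfrak a_t:t\in\II^k)\bigr\|_{L^p(X)},
\end{equation*}
which is the desired estimate with the finite max in place of the full supremum.

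To finish, I would pass from finite subsets to $(\II\setminus\{\sup\II\})^k$. In the countable case for $\II$, write $(\II\setminus\{\sup\II\})^k$ as an increasing union of finite sets $F_n$ and invoke monotone convergence so that $\|\max_{t\in F_n}|\mathfrak a_t|\|_{L^p(X)}$ increases to $\|\sup_{t\in(\II\setminus\{\sup\II\})^k}|\mathfrak a_t|\|_{L^p(X)}$. In the case where $t\mapsto\mathfrak a_t(x)$ is continuous for $\mu$-a.e.\ $x$, the same argument applies after replacing $(\II\setminus\{\sup\II\})^k$ by a countable dense subset, since the pointwise supremum of a continuous family equals the supremum over any dense subset. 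There is no genuine obstacle here; the only point that requires a moment of care is the existence of $s^*\in\II$ with $s^*>T$, which is precisely why one must remove $\sup\II$ from the indexing set on the left-hand side, and the measurability of the various suprema, which is handled by the standing assumption quoted right after the definition of the oscillation semi-norm.
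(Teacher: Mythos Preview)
Your proof is correct and follows essentially the same approach as the paper: both arguments reduce to a $J=1$ oscillation via the triangle inequality $|\mathfrak a_t|\le |\mathfrak a_{\bar I_0}|+|\mathfrak a_t-\mathfrak a_{\bar I_0}|$ and then pass to the full supremum by monotone convergence. The only cosmetic difference is that the paper exhausts $(\II\setminus\{\sup\II\})^k$ by expanding boxes $[a_n,b_n)^k\cap\II^k$ with $a_n\downarrow\inf\II$ and $b_n\uparrow\sup\II$ (taking $I=(a_n,b_n)\in\mathfrak S_1(\II)$), whereas you exhaust by finite subsets $F$ and choose $s_0,s^*$ accordingly.
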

\begin{proof}
Let $a=\inf\II$ and $b=\sup\II$. We see that $a<b$, since
$\#{\II}\ge2$. We choose a decreasing sequence
$(a_n:n\in\NN)\subseteq\II$ and an increasing sequence
$(b_n:n\in\NN)\subseteq\II$ such that $a\le a_n\le b_n\le b$ for every
$n\in\NN$ satisfying
\[
\lim_{n\to\infty}a_n=a,
\qquad \text{and}\qquad
\lim_{n\to\infty}b_n=b
\]
and such that $a_n = a$ for all $n \in \NN$ if $a \in \II$.
By the monotone convergence theorem we get
\begin{align*}
\norm[\big]{\sup_{t \in (\II\setminus\{\sup\II\})^k}\abs{\mathfrak a_{t}}}_{L^p(X)}
&=\lim_{n\to\infty}\norm[\big]{\sup_{t \in [a_n, b_n)^k\cap\II^k}\abs{\mathfrak a_{t}}}_{L^p(X)}\\
&\le\sup_{n} \ \norm{\mathfrak a_{\bar{a}_n}}_{L^p(X)}+
\sup_{n} \ \norm[\big]{\sup_{t \in [a_n, b_n)^k\cap\II^k}\abs{\mathfrak a_{t}-\mathfrak a_{\bar{a}_n}}}_{L^p(X)},
\end{align*}
where $\bar{a}_n=(a_n,\ldots, a_n)\in [a_n, b_n)^k \cap \II^k$,
and consequently we obtain \eqref{eq:9}.
\end{proof}

A remarkable feature of the oscillation seminorms is that they imply
pointwise convergence.  This property is formulated precisely in the
following proposition.

\begin{proposition}
\label{prop:4}
Let $(X, \calB(X), \mu)$ be a $\sigma$-finite measure space. For
$k\in\ZZ_+$ let $(\mathfrak a_{t}: t\in\RR_+^k)$ be a
$k$-parameter family of measurable functions on $X$. Suppose that
there are  $p, r\in[1, \infty)$  such that for any $J\in\ZZ_+$ one has
\begin{align*}
\sup_{I\in \mathfrak S_J(\RR_+)}
\norm[\big]{O_{\bar{I}, J}^r(\mathfrak a_{t}: t \in \RR_+^k)}_{L^p(X)}\le C_{p, r}(J),
\end{align*}
where
\begin{align*}
\lim_{J\to \infty}J^{-\frac{1}{p\vee r}}C_{p, r}(J)=0,
\end{align*}
and 
$\bar{I}\in\mathfrak S_J(\RR_+^k)$ is the diagonal sequence
corresponding to a sequence $I\in \mathfrak S_J(\RR_+)$ as in Remark~\ref{rem:1}.  Then the limits
\begin{align}
\label{eq:161}
\lim_{\min\{t_1,\ldots, t_k\}\to\infty}\mathfrak a_{(t_1,\ldots, t_k)},
\qquad \text{ and } \qquad
\lim_{\max\{t_1,\ldots, t_k\}\to 0}\mathfrak a_{(t_1,\ldots, t_k)},
\end{align}
exist  $\mu$-almost everywhere on $X$.
\end{proposition}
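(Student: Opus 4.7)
The plan is to argue by contradiction, focusing on the first limit in \eqref{eq:161}; the second is handled by the analogous reduction via $t_i \mapsto 1/t_i$. Suppose the first limit fails to exist on a set of positive $\mu$-measure. By the multi-parameter Cauchy criterion, together with the continuity assumption on $t\mapsto \mathfrak a_t(x)$ (which lets me restrict to the countable dense set $(\QQ\cap\RR_+)^k$), I would extract $\epsilon > 0$ and $\Omega_\epsilon \subseteq X$ with $\mu(\Omega_\epsilon) > 0$ such that for every $x \in \Omega_\epsilon$ and every $N \in \ZZ_+$ there exist $s,t \in (\QQ\cap\RR_+)^k$ with $\min_i s_i,\min_i t_i \geq N$ and $|\mathfrak a_s(x) - \mathfrak a_t(x)| > \epsilon$.

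Next, for each $x\in\Omega_\epsilon$ and each $J\in\ZZ_+$, I would iteratively apply the failure of Cauchyness on deeper and deeper tails to construct strictly increasing positive rationals $t_0(x) < t_1(x) < \cdots < t_J(x)$ such that the associated diagonal sequence $\bar I(x) = ((t_j(x),\ldots,t_j(x)))_{j=0}^{J}$ in $\mathfrak S_J((\QQ\cap\RR_+)^k)$ satisfies
\[
\sup_{t\in [t_j(x),t_{j+1}(x))^k \cap \QQ^k} \bigl|\mathfrak a_t(x) - \mathfrak a_{\bar I_j(x)}(x)\bigr| \geq \epsilon/2, \qquad j=0,\ldots,J-1,
\]
yielding the pointwise bound $O^r_{\bar I(x), J}(\mathfrak a(x):t\in\RR_+^k) \geq (\epsilon/2)\,J^{1/r}$ on $\Omega_\epsilon$. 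The selection $x\mapsto \bar I(x)$ can be arranged to be measurable.

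Since $\bar I(x)$ takes countably many values, I would then refine the construction so that each $t_j(x)$ lies on a fixed lacunary scale $\LL_\tau=\{\lfloor\tau^n\rfloor:n\in\NN\}$ for some $\tau>1$ (always possible because Cauchy failures can be witnessed between two sufficiently well-separated scales) and apply an Egorov-type truncation to find $\Omega_\epsilon'\subseteq\Omega_\epsilon$ with $\mu(\Omega_\epsilon')\geq \mu(\Omega_\epsilon)/2$ on which $\bar I(x)$ takes at most $\calM(J)$ distinct values. A pigeonhole then isolates a \emph{fixed} $I^*\in\mathfrak S_J(\LL_\tau)$ and $\Omega^*\subseteq \Omega_\epsilon'$ with $\mu(\Omega^*)\geq \mu(\Omega_\epsilon)/(2\calM(J))$ and $O^r_{\bar I^*, J}(\mathfrak a)\geq (\epsilon/2) J^{1/r}$ throughout $\Omega^*$. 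Applying the hypothesis to the fixed $I^*$ and Chebyshev's inequality gives
\[
\mu(\Omega_\epsilon) \lesssim \calM(J)\cdot \frac{C_{p,r}(J)^p}{\epsilon^p\, J^{p/r}}.
\]
A direct computation from $C_{p,r}(J) = o(J^{1/(p\vee r)})$ shows $C_{p,r}(J)^p / J^{p/r} \to 0$ in both regimes $p\le r$ and $p> r$.

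The hard part will be controlling the combinatorial multiplicity $\calM(J)$: the naive pigeonhole over all strictly increasing length-$(J+1)$ sequences in the $\LL_\tau$-truncation gives $\calM(J) = \binom{n(J)}{J+1}$ with $n(J)$ potentially growing much faster than the polynomial budget afforded by $C_{p,r}(J)^p/J^{p/r}\to 0$. Overcoming this requires adaptively selecting the lacunary base $\tau=\tau(J)$ and carefully organizing the oscillation-witnesses---for instance by forcing consecutive $t_j(x)$'s to lie in disjoint blocks of $\LL_\tau$ whose pattern has only polynomial multiplicity---so that $\calM(J)$ remains at most subexponential in $J$ along a suitable subsequence. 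Once this balance is achieved, letting $J\to\infty$ forces $\mu(\Omega_\epsilon)=0$, contradicting $\mu(\Omega_\epsilon)>0$ and completing the argument.
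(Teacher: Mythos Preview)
Your proposal has a genuine gap: you never show how to control the combinatorial multiplicity $\calM(J)$, and indeed your final paragraph openly concedes this is unresolved. The pigeonhole route you sketch is essentially doomed, because the number of increasing length-$(J{+}1)$ sequences in any finite scale set grows super-polynomially in $J$, while your budget $C_{p,r}(J)^p/J^{p/r}$ only decays polynomially. No clever choice of lacunary base $\tau$ fixes this without additional structure you have not supplied.

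The paper's argument avoids this obstruction entirely, and the key idea is worth internalizing: one does \emph{not} need to find a single fixed $I$ for which the full oscillation $O^r_{\bar I,J}$ is large on a common set. Instead, using only continuity of measure, the paper constructs a \emph{fixed, $x$-independent} strictly increasing sequence $I_0<I_1<\cdots$ such that for each $j$ the set
\[
E_j:=\Big\{x:\sup_{t\in\BB[\bar I_j]}|\mathfrak a_t(x)-\mathfrak a_{\bar I_j}(x)|>\delta\Big\}
\]
has $\mu(E_j)>\delta$. The sets $E_j$ may be \emph{different} for different $j$; that is harmless because one then simply bounds
\[
J\delta^{p+1}=\sum_{j=0}^{J-1}\delta\cdot\delta^p\le\sum_{j=0}^{J-1}\int_{E_j}\sup_{t\in\BB[\bar I_j]}|\mathfrak a_t-\mathfrak a_{\bar I_j}|^p\,d\mu\le\int_X\sum_{j=0}^{J-1}\sup_{t\in\BB[\bar I_j]}|\mathfrak a_t-\mathfrak a_{\bar I_j}|^p\,d\mu,
\]
and the right-hand side is controlled by $J^{1-(p\wedge r)/r}\,C_{p,r}(J)^p$ via H\"older (when $p\le r$) or the embedding $\ell^r\hookrightarrow\ell^p$ (when $p>r$). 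Rearranging gives $J^{(p\wedge r)/r}\delta^{p+1}\le C_{p,r}(J)^p$, which contradicts the hypothesis as $J\to\infty$. The whole point is that linearity of the integral decouples the $j$'s, so there is no need for your $x$-dependent selections, Egorov truncation, or pigeonholing.
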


\begin{proof}
We only prove the first conclusion of \eqref{eq:161} as the second one
can be proved in much the same way.  Suppose by contradiction that
the first limit in \eqref{eq:161} does not exist $\mu$ almost everywhere on $X$. Since $\mu$ is a
$\sigma$-finite measure then there exists $X_0\subseteq X$ such that
$\mu(X_0)<\infty$, and also there is a small $\delta>0$ such that
\begin{align*}
\mu\big(\Set{x\in X_0}{\lim_{N\to\infty}\sup_{s, t\succeq \bar{N}}
\abs{\mathfrak a_{s}(x)-\mathfrak a_{t}(x)}>2\delta}\big)>2\delta,
\end{align*}
where $\bar{N}=(N,\ldots, N)\in\ZZ_+^k$.
For $N\in\ZZ_+$ define
\begin{align*}
A_N:=\Set{x\in X_0}{\sup_{s, t\succeq \bar{N}}\abs{\mathfrak a_{s}(x)-\mathfrak a_{t}(x)}>2\delta}.
\end{align*}
Note that 
$A_{N+1}\subseteq A_N$ for every $N\in\ZZ_+$, and consequently from the continuity of measure one has
\begin{align*}
\lim_{N\to\infty}\mu\big(\Set{x\in X_0}{\sup_{s, t\succeq \bar{N}}\abs{\mathfrak a_{s}(x)-\mathfrak a_{t}(x)}>2\delta}\big)>2\delta.
\end{align*}
Hence there is an $N_0\in\ZZ_+$ such that for every $N\ge N_0$, we have
\begin{align*}
\mu\big(\Set{x\in X_0}{\sup_{t\succeq \bar{N}}\abs{\mathfrak a_{t}(x)-\mathfrak a_{\bar{N}}(x)}>\delta}\big)>\delta.
\end{align*}
For $M, N\in\ZZ_+$ we now define
\begin{align*}
B_M^N:=\Set{x\in X_0}{\sup_{\bar{N}\preceq t\prec_{\rm s}\bar{M}}\abs{\mathfrak a_{t}(x)-\mathfrak a_{\bar{N}}(x)}>\delta}.
\end{align*}
We observe that $B_M^N\subseteq B_{M+1}^N$ for every $M, N\in\ZZ_+$ and
using once again continuity of measure, we obtain for every $N\ge N_0$,
\begin{align}
\label{eq:162}
\lim_{M\to\infty}\mu(B_M^N)=
\mu\big(\Set{x\in X_0}{\sup_{t\succeq \bar{N}}\abs{\mathfrak a_{t}(x)-\mathfrak a_{\bar{N}}(x)}>\delta}\big)>\delta.
\end{align}
Using \eqref{eq:162} recursively we can construct a strictly
increasing sequence $(I_i:i\in\NN)\subset \RR_+$ with $I_0=N_0$ such that for every
$i\in\NN$ we have
\begin{align}
\label{eq:163}
\mu\big(\Set{x\in X_0}{\sup_{t\in\BB[\bar{I}_i]}
\abs{\mathfrak a_{t}(x)-\mathfrak a_{\bar{I}_i}(x)}>\delta}\big)>\delta,
\end{align}
where $\bar{I}_i=(I_i,\ldots,I_i)\in\RR_+^k$.
Then by  \eqref{eq:163} we obtain for every $J\in\ZZ_+$ that
\begin{align*}
J\delta^{p+1}=\sum_{j=0}^{J-1}\delta^{p+1}&\le
\int_{X}\sum_{j=0}^{J-1}\sup_{t\in\BB[\bar{I}_j]}
\abs{\mathfrak a_{t}(x)-\mathfrak a_{\bar{I}_j}(x)}^p\dif\mu(x)\\
&\le J^{1-q/r}\sup_{I\in \mathfrak S_J(\ZZ_+)}
\norm[\big]{O_{\bar{I}, J}^r(\mathfrak a_{t}: t \in \RR_+^k)}_{L^p(X)}^p,
\end{align*}
where $q:=p\wedge r$.
Thus
\begin{align*}
J^{q/r}\delta^{p+1}
\le \sup_{I\in \mathfrak S_J(\RR_+)}
\norm[\big]{O_{\bar{I}, J}^r(\mathfrak a_{t}: t \in \RR_+^k)}_{L^p(X)}^p
\le C_{p, r}(J)^p.
\end{align*}
Letting $J\to \infty$ we get a contradiction.  This completes the
proof of Proposition \ref{prop:4}.
\end{proof}

\subsection{Variation semi-norms}
 We recall the definition of $r$-variations. For any $\mathbb I\subseteq \RR$, any family $(\mathfrak a_t: t\in\mathbb I)\subseteq \CC$, and any exponent
$1 \leq r < \infty$, the $r$-variation semi-norm is defined to be
\begin{align}
\label{eq:67}
V^{r}(\mathfrak a_t: t\in\mathbb I):=
\sup_{J\in\ZZ_+} \sup_{\substack{t_{0}<\dotsb<t_{J}\\ t_{j}\in\mathbb I}}
\Big(\sum_{j=0}^{J-1}  |\mathfrak a_{t_{j+1}}-\mathfrak a_{t_{j}}|^{r} \Big)^{1/r},
\end{align}
where the latter supremum is taken over all finite increasing sequences in
$\mathbb I$.

\begin{remark}
\label{rem:2}
Some remarks about definition \eqref{eq:67}  are in order.
\begin{enumerate}[label*={\arabic*}.]
\item Clearly $V^{r}(\mathfrak a_t: t\in\mathbb I)$ defines a semi-norm.
\item The function $r\mapsto V^r(\mathfrak{a}_t: t\in \II)$ is non-increasing. Moreover, if $\II_1\subseteq \II_2$, then
\begin{equation*}
    V^r(\mathfrak{a}_t: t\in \II_1)\le V^r(\mathfrak{a}_t: t\in \II_2).
\end{equation*}

 \item  Let $\II\subseteq \RR$ be  such that $\#{\II}\ge2$. Let  $(\mathfrak a_t:t\in\RR)\subseteq \CC$ be given, and let $r\in[1, \infty)$. If $V^r(\mathfrak{a}_t: t\in \RR)<\infty$ then
$\lim_{t\to\infty}\mathfrak{a}_t$ exists. Moreover, for any $t_0\in\II$ one has
\begin{align}
\label{eq:68}
\sup_{t\in\II}|\mathfrak{a}_t|\le |\mathfrak{a}_{t_0}|+V^r(\mathfrak{a}_t: t\in \II).
\end{align}

\item Let $\II\subseteq \RR$ be 
such that $\#{\II}\ge2$. Then 
 for any $r\ge1$, and any family $(\mathfrak a_t:t\in\II)\subseteq \CC$, any  $J\in\ZZ_+\cup\{\infty\}$, any  $I\in\mathfrak S_J(\II)$  one has
 \begin{align}
\label{eq:62}
 O_{I, J}^r(\mathfrak a_{t}: t \in \II)\le V^r(\mathfrak{a}_t: t\in \II)\le 2\Big(\sum_{t\in\II}|\mathfrak a_{t}|^r\Big)^{1/r}.
 \end{align}

\item Let $(\mathfrak{a}_t(x): t\in\RR_+)$ be a family of complex-valued measurable functions on a $\sigma$-finite measure space $(X,\calB(X),\mu)$. Then for any $p\ge1$ and $r \ge 2$ we have 
\begin{align}
\label{eq:70}
\begin{gathered}
\qquad \sup_{N\in\ZZ_+}\sup_{I\in\mathfrak{S}_N(\RR_+)}\norm{O_{I,N}^r(\mathfrak{a}_t:t\in\RR_+)}_{L^p(X)}\lesssim \sup_{N\in\ZZ_+}\sup_{I\in\mathfrak{S}_N(\DD)}\norm{O_{I,N}^r(\mathfrak{a}_{t}:t\in \DD)}_{L^p(X)}\\
\qquad +\norm[\Big]{\Big(\sum_{n\in\ZZ} V^r(\mathfrak{a}_{t}:t\in[2^{n},2^{n+1}])^2\Big)^{1/2}}_{L^p(X)}.
\end{gathered}
\end{align}
The inequality \eqref{eq:70} is an analogue of \cite[Lemma 1.3, p. 6716]{JSW} for oscillation semi-norms. 
\end{enumerate}

\end{remark}

\subsection{Jumps}
The $r$-variation is closely related to the $\lambda$-jump counting function. Recall that for any $\lambda>0$ the $\lambda$-jump counting function of a function $f : \I \to \C$ is defined by
\begin{align}
\label{eq:71}
N_{\lambda} f:=N_\lambda(f(t):t\in\II):=\sup \{ J\in\N : \exists_{\substack{t_{0}<\ldots<t_{J}\\ t_{j}\in\I}}  : \min_{0 \le j \le J-1} |f(t_{j+1})-f(t_{j})| \ge \lambda \}.
\end{align}

\begin{remark}
\label{rem:3}
Some remarks about definition \eqref{eq:71}  are in order.
\begin{enumerate}[label*={\arabic*}.]
\item For any $\lambda>0$ and a function $f : \I \to \C$ let us also define the following quantity
\begin{align*}
\qquad \mathcal{N}_{\lambda} f:=\calN_\lambda(f(t):t\in\II):=\sup \{ J\in\N : \exists_{\substack{s_1 < t_{1}  \le \ldots \le s_J < t_{J} \\ s_j , t_{j}\in\I}}  : \min_{1 \le j \le J} |f(t_{j})-f(s_{j})| \ge \lambda \}.  
\end{align*}
Then on has $N_{\lambda} f \le \mathcal{N}_{\lambda} f \le N_{\lambda/2} f$.

\item It is clear from these definitions that 
$f\mapsto \sup_{\lambda>0} \big\| \lambda N_{\lambda}(f(\cdot, t): t\in \II)^{1/\rho} \big\|_{L^{p}(X)}$
satisfies a quasi-triangle inequality. However it is not obvious whether  a genuine triangle inequality is available for
$\lambda$-jumps. 
In many applications, the problem can be overcome since there is always a comparable semi-norm in the following sense.
Namely, for every
$p \in (1,\infty)$, and $\rho \in (1,\infty)$ there exists a constant
$0<C<\infty$ such that for every measure space $(X,\calB(X),\mu)$, and
$\II\subseteq \RR$, there exists a (subadditive) seminorm
$\vertiii{\cdot}$ such that the following two-sided inequality
\begin{align*}
C^{-1} \vertiii{f} \leq \sup_{\lambda>0} \big\| \lambda N_{\lambda}(f(\cdot, t): t\in \II)^{1/\rho} \big\|_{L^{p}(X)} \leq C \vertiii{f}
\end{align*}
 holds for all measurable functions $f : X \times \I \to \CC$. This was established in
\cite[Corollary 2.2, p. 805]{MSZ1}.

\item  Let $(\mathfrak{a}_t(x): t\in\RR)$ be a family of measurable functions on a $\sigma$-finite measure space $(X,\calB(X),\mu)$. Let $\II\subseteq\RR$ and $\#\II\geq2$, then for every $p\in[1,\infty]$ and $r\in[1,\infty)$ we have 
\begin{align}
\label{eq:72}
\sup_{\lambda>0}\|\lambda N_{\lambda}(\mathfrak{a}_t: t\in\II)^{1/r}\|_{L^p(X)}\le \|V^r(\mathfrak{a}_t: t\in\II)\|_{L^p(X)},
\end{align}
since for all $\lambda>0$ we have the following pointwise estimate
\[
\lambda N_{\lambda}(\mathfrak{a}_t(x): t\in\II)^{1/r}\le V^r(\mathfrak{a}_t(x): t\in\II).
\]
\item  Let $(X,\mathcal{B}(X),\mu)$ be a $\sigma$-finite measure space and $\I\subseteq\RR$. Fix $p \in [1,\8]$, and $1 \le \rho < r \le \infty$. Then for every measurable function $f : X \times \I \to \C$ we have the estimate
\begin{align} \label{estt1}
\big\| V^{r}\big( f(\cdot, t) : t \in \I \big) \big\|_{L^{p,\8} (X)}
\lesssim_{p,\rho,r}
\sup_{\lambda>0} \big\| \lambda N_{\lambda}(f(\cdot, t): t\in \I)^{1/\rho} \big\|_{L^{p,\8}(X)}.
\end{align}
The inequality \eqref{estt1} can be thought of as an inverse to  inequality \eqref{eq:72}. A proof of \eqref{estt1} can be found in \cite[Lemma 2.3, p. 805]{MSZ1}. Moreover, one cannot replace $L^{p,\8} (X)$  with $L^{p} (X)$ in \eqref{estt1}, see \cite[Lemma 2.24]{MSS}. One can also show that there is  $f:\ZZ_+\times\ZZ_+\to\RR$ such that  
\begin{equation*}
\sup_{N\in\ZZ_+}\sup_{I\in\mathfrak{S}_N(\ZZ_+)}\norm{O_{I,N}^r(f(\cdot,n):n\in\ZZ_+)}_{\ell^p(\ZZ_+)}
=\infty,\quad 2\le r\le\infty,
\end{equation*}
but
\begin{equation*}
\sup_{\lambda > 0} \| \lambda N_{\lambda}(f(\cdot,n):n\in\ZZ_+)^{1/2} \|_{\ell^p(\ZZ_+)} < \infty.
\end{equation*}

\end{enumerate}
\end{remark}

%%%%%%%%%%%%%%%%%%%%%%%%%%%%%%%%%%%%%%%%%%%%%%%%%%%%%%%%%%%%%%%%%%%%%%%%%%%%%
\section{One-parameter oscillation estimates}
\label{section:3}
%%%%%%%%%%%%%%%%%%%%%%%%%%%%%%%%%%%%%%%%%%%%%%%%%%%%%%%%%%%%%%%%%%%%%%%%%%%%%

We state a simple one-parameter oscillation estimate for projections, which has many interesting implications. Here we are inspired by observations of M. Lacey who highlighted and pointed out the importance of projections in pointwise ergodic theory; see \cite{RW}.

\begin{theorem} \label{thm:p}
Let $(X,\mathcal B(X), \mu)$ be a $\sigma$-finite measure space and let  $\II\subseteq \RR$ be such that $\#\II\ge2$. Let $(P_t)_{t \in \II}$ be a family of projections; that is, the linear operators $P_t:L^0(X)\to L^0(X)$ satisfying
\begin{align}
\label{eq:10}
P_s P_t = P_{s \wedge t}, \qquad \text{ for} \qquad  s \not=  t. 
\end{align}
If the set $\II$ is uncountable then we assume in addition that $\II\ni t\mapsto P_tf$ is
continuous $\mu$-almost everywhere on $X$ for every $f\in L^0(X)$.
Let $p, r \in(1, \infty)$ be fixed. Suppose that
$P_t$ are bounded on $L^p(X)$, and suppose that the
following two estimates hold
\begin{align} \label{1.15}
\sup_{J\in\ZZ_+}\sup_{I\in \mathfrak{S}_J(\II)} 
\norm[\Big]{ \Big( \sum_{j=0}^{J-1} 
\abs{ (P_{I_{j+1}}- P_{I_{j}}) f}^r \Big)^{1/r} }_{L^p(X)}
\lesssim_{p, r} 
\norm{f}_{L^p(X)}, \qquad f \in L^p(X),
\end{align}
and the vector-valued estimate
\begin{align} \label{1.2}
\norm[\Big]{ \Big( \sum_{j \in \ZZ} \sup_{t \in \II} \abs{P_{t} f_j}^r \Big)^{1/r} }_{L^p(X)} 
\lesssim_{p, r} 
\norm[\Big]{ \Big( \sum_{j \in \ZZ} \abs{f_j}^r \Big)^{1/r} }_{L^p(X)}, \qquad (f_j)_{j\in\ZZ}\in L^p(X; \ell^r(\ZZ)).
\end{align}
Then the following one-parameter oscillation estimate holds: 
\begin{align} \label{1.3}
\sup_{J \in \ZZ_+} \sup_{ I \in \mathfrak S_{J}(\II) }
\norm{ O^r_{I, J} ( P_{t} f : t \in \II) }_{L^p(X)} 
\lesssim_{p, r}
\norm{ f }_{L^p(X)}, 
\qquad f \in L^p(X).
\end{align}
\end{theorem}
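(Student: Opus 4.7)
The core idea is the \emph{projective telescoping identity}: for any $t$ with $I_j < t < I_{j+1}$, the projection property \eqref{eq:10} gives
\[
P_{t} P_{I_{j+1}} = P_{t \wedge I_{j+1}} = P_{t}
\qquad\text{and}\qquad
P_{t} P_{I_{j}} = P_{t \wedge I_{j}} = P_{I_{j}},
\]
so that
\[
P_{t} f - P_{I_{j}} f \;=\; P_{t}\bigl(P_{I_{j+1}} f - P_{I_{j}} f\bigr) \;=\; P_{t} g_{j},
\qquad g_{j} := (P_{I_{j+1}} - P_{I_{j}}) f.
\]
At $t=I_j$ the difference vanishes. Thus, on each block $[I_{j}, I_{j+1}) \cap \II$, the oscillation of $P_t f$ around $P_{I_j} f$ is controlled by $\sup_{t \in \II} |P_t g_j|$, which converts oscillation estimates into a combination of a vector-valued maximal estimate and a square function estimate for \emph{only} the endpoints of the sequence $I$.

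The plan is to carry out this reduction cleanly. First, I would verify the identity above carefully, noting that in the uncountable case the continuity hypothesis lets us work with arbitrary $t \in (I_j, I_{j+1})$ via pointwise limits if needed. Second, I would estimate
\[
\sup_{t \in [I_j, I_{j+1}) \cap \II} \abs{P_t f - P_{I_j} f} \;\le\; \sup_{t \in \II} \abs{P_t g_j},
\]
raise to the $r$-th power, and sum in $j \in \{0, \ldots, J-1\}$. Third, applying the vector-valued maximal hypothesis \eqref{1.2} to the sequence $(g_j)_{j=0}^{J-1}$ (extended by zero outside this range) yields
\[
\norm[\Big]{\Bigl(\sum_{j=0}^{J-1} \sup_{t\in\II} \abs{P_t g_j}^r\Bigr)^{1/r}}_{L^p(X)}
\;\lesssim_{p,r}\;
\norm[\Big]{\Bigl(\sum_{j=0}^{J-1} \abs{(P_{I_{j+1}} - P_{I_{j}}) f}^r\Bigr)^{1/r}}_{L^p(X)}.
\]
Fourth, invoking the square function hypothesis \eqref{1.15} on the right-hand side gives a bound $\lesssim_{p,r} \norm{f}_{L^p(X)}$, uniformly in $J$ and in $I \in \mathfrak{S}_J(\II)$, which is precisely \eqref{1.3}.

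There is essentially no major obstacle beyond this: the projection identity does all the structural work, collapsing a seemingly complicated oscillation seminorm into the composition of two hypotheses the theorem already supplies. The only minor technical points worth attending to are (a) justifying the identity at the endpoint $t = I_j$ (trivial, since both sides are zero) and at $t$ approaching $I_{j+1}$ (handled by continuity when $\II$ is uncountable, and by the strict inequality $t < I_{j+1}$ otherwise), and (b) ensuring that the supremum inside the $L^p$-norm is measurable, which again follows from the continuity assumption or the countability of $\II$. Everything else reduces to assembling the three displays above.
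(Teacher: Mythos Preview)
Your proposal is correct and follows essentially the same approach as the paper: the key identity $(P_t - P_{I_j})f = P_t(P_{I_{j+1}} - P_{I_j})f$ for $I_j < t < I_{j+1}$, then \eqref{1.2} followed by \eqref{1.15}. You have added a little more care about the endpoint $t=I_j$ and measurability than the paper does, but the argument is the same.
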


\begin{proof} 
Fix $J \in \ZZ_+$ and $I \in \mathfrak S_{J}(\II)$ and
observe, using \eqref{eq:10},  that
\begin{align*} 
(P_t - P_{I_{j}} )f = P_t (P_{I_{j+1}} - P_{I_{j}}) f, 
\qquad \text{ whenever } \qquad I_j < t < I_{j+1}.
\end{align*}
Using this identity and then \eqref{1.2} we see that 
\begin{align*} 
\norm[\Big]{ \Big( \sum_{j=0}^{J-1} \sup_{\substack{I_j < t<I_{j+1}\\ t\in \II}} \abs{P_t f - P_{I_{j}} f}^r \Big)^{1/r} }_{L^p(X)}
& \le 
\norm[\Big]{ \Big( \sum_{j=0}^{J-1} \sup_{t \in \II} 
\abs{P_t (P_{I_{j+1}} - P_{I_{j}}) f}^r \Big)^{1/r} }_{L^p(X)} \\
& \lesssim_{p, r}
\norm[\Big]{ \Big( \sum_{j=0}^{J-1} 
\abs{ (P_{I_{j+1}} - P_{I_{j}}) f}^r \Big)^{1/r} }_{L^p(X)}.
\end{align*}
Now applying \eqref{1.15} we arrive at \eqref{1.3}.
The proof of Theorem~\ref{thm:p} is complete.
\end{proof} 

\begin{remark} \label{rem:33}
A few remarks are in order.
\begin{enumerate}[label*={\arabic*}.]
\item Theorem~\ref{thm:p} will be applied mainly when $r=2$. Then the estimate in \eqref{1.15} is a square function estimate, which can be deduced from the estimate
\begin{align}
\label{1.1}
\sup_{J \in \ZZ_+} \sup_{ I \in \mathfrak S_{J}(\II) } \sup_{\substack{\abs{\varepsilon_j} \le 1\\ 0 \le j\le J}}
\norm[\Big]{\sum_{j = 0}^{J-1} \varepsilon_j (P_{I_{j+1}} f - P_{I_{j}} f) }_{L^p(X)} 
\lesssim_p 
\norm{f}_{L^p(X)}, \qquad f \in L^p(X).
\end{align}
In fact, the implication from \eqref{1.1} to \eqref{1.15} is a simple consequence of Khintchine's inequality.

\item Let $(X,\mathcal B(X), \mu)$ be a $\sigma$-finite measure space, $\II\subseteq \RR$ be countable and let $(T_t)_{t \in \II}$ be a family of bounded operators on $L^p(X)$ for $p\in(1, \infty)$ satisfying
\begin{align}\label{eq:12}
\norm[\Big]{ \Big(\sum_{t \in \II} \abs{(T_t-P_{t})f}^2 \Big)^{1/2} }_{L^p(X)} 
\lesssim_p 
\norm{f}_{L^p(X)}, \qquad f \in L^p(X),
\end{align}
where $(P_t)_{t \in \II}$ is a family of projections as in Theorem~\ref{thm:p} satisfying \eqref{1.15} and \eqref{1.2} with $r=2$. Then one has
\begin{align}
\label{eq:13}
\sup_{J \in \ZZ_+} \sup_{ I \in \mathfrak S_{J}(\II) }
\norm{ O^2_{I, J} ( T_{t} f :t \in \II) }_{L^p(X)} 
\lesssim_{p}
\norm{ f }_{L^p(X)}, 
\qquad f \in L^p(X).
\end{align}
In fact, in view of \eqref{eq:62} the inequality \eqref{eq:12} easily reduces the 2-oscillation estimate for $(T_t)_{t \in \II}$ to a 2-oscillation estimate for $(P_t)_{t \in \II}$.
This observation will be very useful in many applications. We will see how it works in the case of 
smooth bump functions, see Theorem~\ref{thm:1}.

\item As we know oscillation inequalities are important in pointwise convergence  problems, and in the vast majority of applications it suffices to understand \eqref{eq:13} for $p=2$.  This can be nicely illustrated as follows: suppose for $p\in(1, \infty)$ one has an a priori maximal bound
\begin{align}
\label{eq:14}
\norm[big]{\sup_{t \in \II} \abs{P_{t} f}}_{L^p(X)}\lesssim_p\norm{f}_{L^p(X)}, \qquad f \in L^p(X).
\end{align}
Then \eqref{eq:14} with $p=2$  can be used to verify \eqref{1.2} with $p=r=2$. Finally, it remains to  verify \eqref{1.15} with $p=r=2$, which in many cases can be deduced by using Fourier techniques or exploiting almost-orthogonality phenomena invoking $TT^*$ arguments, see Proposition \ref{prop:6}. 
\end{enumerate}
\end{remark}

We now derive some consequences of Theorem~\ref{thm:p}.

\subsection{Oscillation inequalities for martingales}

We recall some basic facts about martingales. We will follow  notation from \cite[Section 3, p. 165]{HNVW}. Let
$(X,\calB(X),\mu)$ be a $\sigma$-finite measure space and let $\II$ be
a totally ordered set. A sequence of sub-$\sigma$-algebras
$(\mathcal F_t: t\in\II)$ is called a \textit{filtration} if it is increasing and
the measure $\mu$ is $\sigma$-finite on each $\mathcal F_t$.
A \textit{martingale} adapted to a filtration $(\mathcal F_t: t\in\II)$ is a family of functions $\mathfrak f=(\mathfrak f_t:t\in\II)\subseteq L^1(X,\calB(X),\mu)$ such that $\mathfrak f_{s}=\EE[\mathfrak f_t|\mathcal F_s]$ for every $s, t\in\II$ so that $s\le t$, where $\EE[\cdot|\mathcal F]$ denotes the the conditional
expectation operator with respect to a sub-$\sigma$-algebra $\mathcal F\subseteq \mathcal B(X)$. We say that a martingale 
$\mathfrak f=(\mathfrak f_t:t\in\II)\subseteq L^p(X,\calB(X),\mu)$ is bounded if
\[
\sup_{t\in\II}\|\mathfrak f_t\|_{L^p(X)}\lesssim_p1.
\]

Applying Theorem~\ref{thm:p} we immediately recover the oscillation inequality of Jones--Kaufman--Rosenblatt--Wierdl \cite{jkrw}, which in fact is an oscillation inequality for bounded martingales.

\begin{proposition}\label{prop:1}
For every $p\in(1, \infty)$ there exists a constant $C_p>0$ such that for every
bounded martingale $\mathfrak f=(\mathfrak f_n:n\in\ZZ)\subseteq L^p(X,\calB(X),\mu)$ corresponding to a filtration $(\mathcal F_n: n\in\ZZ)$ one has
\begin{align}
\label{eq:11}
\sup_{J\in\ZZ_+}\sup_{I\in\mathfrak S_J(\ZZ)}
\norm[\big]{O^2_{I,J}(\mathfrak f_n:n\in\ZZ)}_{L^p(X)}
&\leq C_p
\sup_{n\in\ZZ}\|\mathfrak f_n\|_{L^p(X)}.
\end{align}
\end{proposition}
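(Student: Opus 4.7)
The plan is to derive Proposition~\ref{prop:1} as a direct application of Theorem~\ref{thm:p} with $r=2$, by taking the projections to be the conditional expectation operators $P_n f := \EE[f \mid \mathcal F_n]$ attached to the filtration $(\mathcal F_n : n \in \ZZ)$. First, I would reduce to the case where $\mathfrak f_n = P_n f$ for a single $f \in L^p(X)$ with $\|f\|_{L^p(X)} \lesssim \sup_{n\in\ZZ} \|\mathfrak f_n\|_{L^p(X)}$. Since $p > 1$ and the martingale is $L^p$-bounded, the forward martingale convergence theorem provides $f := \lim_{n\to\infty} \mathfrak f_n$ in $L^p(X)$; passing to the limit in the identity $\mathfrak f_n = \EE[\mathfrak f_m \mid \mathcal F_n]$ for $m \ge n$ yields $\mathfrak f_n = P_n f$ for every $n \in \ZZ$.

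Next, I would verify the projection property \eqref{eq:10}: this is immediate from the tower rule, since for $s \le t$ one has $P_s P_t f = \EE[\EE[f|\mathcal F_t]|\mathcal F_s] = \EE[f|\mathcal F_s] = P_{s\wedge t} f$, while for $s \ge t$ the function $\EE[f|\mathcal F_t]$ is already $\mathcal F_s$-measurable, so $P_s P_t f = \EE[f|\mathcal F_t] = P_{s\wedge t} f$. In particular, each $P_n$ is bounded on $L^p(X)$ by Jensen's inequality.

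The main step is the verification of the two quantitative hypotheses \eqref{1.15} and \eqref{1.2} with $r=2$. For \eqref{1.15}, given $J \in \ZZ_+$ and $I \in \mathfrak S_J(\ZZ)$, the finite sequence $(P_{I_j} f)_{j=0}^{J}$ is a martingale adapted to the coarsened filtration $(\mathcal F_{I_j})_{j=0}^{J}$, and its increments $d_j := (P_{I_{j+1}} - P_{I_j})f$ form a martingale difference sequence. The classical Burkholder square function inequality for $1 < p < \infty$ yields
\begin{align*}
\Big\|\Big(\sum_{j=0}^{J-1} |d_j|^2\Big)^{1/2}\Big\|_{L^p(X)} \lesssim_p \|P_{I_J} f\|_{L^p(X)} \le \|f\|_{L^p(X)},
\end{align*}
uniformly in $I$ and $J$. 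For \eqref{1.2}, I would invoke the $\ell^2$-valued Doob maximal inequality: since $P_t$ is a contractive conditional expectation, Doob's inequality combined with the fact that $\ell^2$ is a UMD (in fact Hilbert) space gives
\begin{align*}
\Big\|\Big(\sum_{j\in\ZZ} \sup_{t\in\ZZ} |P_t f_j|^2\Big)^{1/2}\Big\|_{L^p(X)} \lesssim_p \Big\|\Big(\sum_{j\in\ZZ} |f_j|^2\Big)^{1/2}\Big\|_{L^p(X)}
\end{align*}
for every $1 < p < \infty$; this is a standard consequence of the scalar Doob inequality applied in the Hilbert-space-valued setting. With these two inputs verified, Theorem~\ref{thm:p} applies and delivers \eqref{eq:11}.

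The only real obstacle is the choice of the quantitative martingale inputs: both Burkholder's square function bound and the vector-valued Doob inequality are classical, so no new work is needed. The subtle conceptual point is simply recognizing that an arbitrary $L^p$-bounded martingale can be realized as $P_n f$ for a single function $f$, so that the abstract projection framework of Theorem~\ref{thm:p} is applicable; everything else is an instance of the standard martingale toolkit.
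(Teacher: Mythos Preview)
Your proposal is correct and follows essentially the same route as the paper: both apply Theorem~\ref{thm:p} to the conditional expectations $P_n=\EE[\,\cdot\,|\mathcal F_n]$, verify \eqref{1.15} via Burkholder, and verify \eqref{1.2} via a vector-valued Doob/Fefferman--Stein maximal inequality. The only cosmetic difference is the reduction to a single generating function: you invoke the $L^p$ martingale convergence theorem to produce a global $f$ with $\mathfrak f_n=P_nf$, whereas the paper, for each fixed $J$ and $I\in\mathfrak S_J(\ZZ)$, simply uses the terminal element $\mathfrak f_{I_J}$ and the identity $\mathfrak f_{I_j}=P_{I_j}(\mathfrak f_{I_J})$, thereby avoiding any appeal to martingale convergence.
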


Inequality \eqref{eq:11} was established in  \cite[Theorem 6.4, p. 930]{jkrw}. The authors first established \eqref{eq:11} for $p=2$, then proved weak type $(1,1)$ as well as $L^{\infty}\to {\rm BMO}$ variants of \eqref{eq:11}, and consequently derived \eqref{eq:11} for all $p\in(1, \infty)$ by interpolation.
Our approach is direct and will avoid using any interpolation arguments in the proof. 

\begin{proof}[Proof of Proposition \ref{prop:1}]
Fix $p\in(1, \infty)$. Define  projections by $P_n(f):=\EE[f|\calF_n]$ for any $n\in\ZZ$ and $f\in L^p(X)$. Since $\mathfrak f=(\mathfrak f_n:n\in\ZZ)$ is a martingale then $\mathfrak f_n=P_n(\mathfrak f_n)$ for any $n\in\ZZ$ and consequently \eqref{eq:10} holds. Moreover, by Burkholder \cite{Bu1}, see also \cite{Bu2},  it is very well known that \eqref{1.1} holds, which in view of Remark \ref{rem:33}, implies
\begin{align*}
\sup_{J\in\ZZ_+}\sup_{I\in \mathfrak{S}_J(\ZZ_+)} 
\norm[\Big]{ \Big( \sum_{j=0}^{J-1} 
\abs{ P_{I_{j+1}}(\mathfrak f_{I_J})- P_{I_{j}}(\mathfrak f_{I_J})}^2 \Big)^{1/2} }_{L^p(X)}
\lesssim_{p} 
\sup_{n\in\ZZ}\|\mathfrak f_n\|_{L^p(X)}.
\end{align*}
This consequently verifies inequality \eqref{1.15}. Invoking the Fefferman--Stein inequality for non-negative submartingales \cite[Theorem 3.2.7, p. 178]{HNVW} we obtain
\begin{align*}
\norm[\Big]{ \Big( \sum_{j \in \ZZ} \sup_{n \in \ZZ} \abs{\EE[|f_j||\calF_n]}^2 \Big)^{1/2} }_{L^p(X)} 
\lesssim_{p} 
\norm[\Big]{ \Big( \sum_{j \in \ZZ} \abs{f_j}^2 \Big)^{1/2} }_{L^p(X)}, \qquad (f_j)_{j\in\ZZ}\in L^p(X; \ell^2(\ZZ)),
\end{align*}
which in turn verifies the vector-valued estimate from \eqref{1.2}. Appealing to Theorem \ref{thm:p}, the oscillation inequality \eqref{eq:11} follows and the proof of Proposition \ref{prop:1} is complete. 
\end{proof}

\subsection{Oscillation inequalities for smooth bump functions}
Our aim will be to show that oscillation inequalities hold for $L^1$-dilated smooth bump functions. We begin with the main estimate.
\begin{proposition}
\label{prop:2}
For $d\in\ZZ_+$ let $\chi:\RR^d\to[0, 1]$ be a smooth function satisfying
\begin{align}
\label{eq:15}
\ind{[-1,1]^d} \le \chi \le \ind{[-2,2]^d}
\quad \text{ for } \quad
\xi\in\RR^d.
\end{align}
For every $n\in \ZZ$ and $\xi\in\RR^d$ define
$\chi_{2^n}(\xi):=\chi(2^{-n}\xi)$. Then for every $p\in(1, \infty)$ one has
\begin{align}
\label{eq:16}
\sup_{J \in \ZZ_+} \sup_{ I \in \mathfrak S_{J}(\ZZ) }
\norm{ O^2_{I, J} ( T_{\RR^d}[\chi_{2^n}]f : n \in \ZZ) }_{L^p(\RR^d)} 
\lesssim_{p}
\norm{ f }_{L^p(\RR^d)}, 
\qquad f \in L^p(\RR^d).
\end{align}
\end{proposition}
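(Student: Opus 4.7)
The plan is to apply Theorem~\ref{thm:p} with $r = 2$ to the family $P_n := T_{\RR^d}[\chi_{2^n}]$, $n \in \ZZ$. The key observation---perhaps not immediately obvious since $\chi$ is a smooth bump rather than a sharp cutoff---is that $(P_n)_{n \in \ZZ}$ nevertheless satisfies the abstract projection identity \eqref{eq:10}. Indeed, \eqref{eq:15} forces $\chi_{2^n}$ to be supported in $\{|\xi|_\infty \le 2^{n+1}\}$ and to equal $1$ on $\{|\xi|_\infty \le 2^n\}$. So if $n < m$ then $2^{n+1} \le 2^m$, whence $\chi_{2^m} \equiv 1$ on the support of $\chi_{2^n}$ and $\chi_{2^n}\chi_{2^m} = \chi_{2^{n \wedge m}}$; equivalently, $P_n P_m = P_{n \wedge m}$ for $n \ne m$. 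Each $P_n$ is convolution with the Schwartz function $2^{nd}\check{\chi}(2^n \cdot)$, hence is bounded on $L^p(\RR^d)$ for every $p \in [1, \infty]$.

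The vector-valued bound \eqref{1.2} is obtained from the pointwise domination $\sup_{n \in \ZZ}|P_n f| \lesssim \calM f$ (valid because $\check{\chi}$ admits an integrable radially decreasing majorant) combined with the Fefferman--Stein vector-valued maximal inequality. For the square function estimate \eqref{1.15} with $r = 2$, using Remark~\ref{rem:33}(1) it suffices to verify \eqref{1.1}. Setting $\psi(\xi) := \chi(\xi) - \chi(2\xi)$, which is smooth and supported in $\{1/2 \le |\xi|_\infty \le 2\}$, a telescoping computation yields
\begin{equation*}
\chi_{2^{I_{j+1}}}(\xi) - \chi_{2^{I_j}}(\xi) = \sum_{\ell = I_j + 1}^{I_{j+1}} \psi(2^{-\ell}\xi),
\end{equation*}
so for any $|\varepsilon_j| \le 1$ we can write $\sum_{j=0}^{J-1} \varepsilon_j(P_{I_{j+1}} - P_{I_j}) = T_{\RR^d}[m_\varepsilon]$, where $m_\varepsilon(\xi) := \sum_\ell a_\ell \psi(2^{-\ell}\xi)$ with $a_\ell = \varepsilon_j$ when $I_j < \ell \le I_{j+1}$ and $a_\ell = 0$ otherwise.

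The one technical point is verifying that $T_{\RR^d}[m_\varepsilon]$ is bounded on $L^p(\RR^d)$ uniformly in $J$, $I$, and $\varepsilon$. Since the supports of the dilated bumps $\psi(2^{-\ell}\cdot)$ have bounded overlap, only boundedly many of them are nonzero at any given $\xi$, and a direct computation shows that $m_\varepsilon$ satisfies the H\"ormander--Mikhlin condition $|\xi|^{|\alpha|}|\partial^\alpha m_\varepsilon(\xi)| \lesssim_\alpha 1$ uniformly in the parameters. The Mikhlin multiplier theorem then yields $\|T_{\RR^d}[m_\varepsilon]f\|_{L^p(\RR^d)} \lesssim_p \|f\|_{L^p(\RR^d)}$ for every $1 < p < \infty$. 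This establishes \eqref{1.1}, and Theorem~\ref{thm:p} delivers \eqref{eq:16}. The main nuance is that despite the annuli $\{\chi_{2^{I_{j+1}}} - \chi_{2^{I_j}} \ne 0\}$ having arbitrarily large thickness (ratio $2^{I_{j+1} - I_j}$), the Littlewood--Paley telescoping uniformly tames them.
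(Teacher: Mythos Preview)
Your proof is correct and follows the same overall approach as the paper: verify that $(P_n)_{n\in\ZZ}$ satisfies the projection identity \eqref{eq:10}, establish \eqref{1.15} with $r=2$ via Littlewood--Paley theory, obtain \eqref{1.2} from Fefferman--Stein, and apply Theorem~\ref{thm:p}. The only difference is in the level of detail: where the paper simply invokes ``standard Littlewood--Paley arguments (see Section~\ref{lp})'' for \eqref{1.15}, you spell out an explicit route through Remark~\ref{rem:33}(1), telescoping into the bump $\psi$, and the Mikhlin multiplier theorem --- a perfectly valid realization of what the paper leaves implicit.
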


\begin{proof}
Setting $P_nf:= T_{\RR^d}[\chi_{2^{n}}]f$ for every $n\in\ZZ$, and using \eqref{eq:15} one sees that $P_n$ is  a projection in the sense of \eqref{eq:10}. Standard arguments based on the Littlewood--Paley theory 
(see Section \ref{lp}) show that \eqref{1.15} with $r=2$ holds. By the Fefferman--Stein inequality \cite{bigs} we also obtain \eqref{1.2}. An application of Theorem \ref{thm:p} now gives \eqref{eq:16} as desired.
\end{proof}

Now our aim will be to extend inequality \eqref{eq:16} to continuous times and general smooth bump functions. 

\begin{remark} \label{rem:100}
A few remarks concerning Proposition \ref{prop:2} are in order. 

\begin{enumerate}[label*={\arabic*}.]
\item An important feature of our approach in Proposition \ref{prop:2}
is that we do not need to invoke the corresponding inequality for
martingales in the proof. This stands in sharp contrast to variants
of inequality \eqref{eq:16} involving $r$-variations, where all
arguments to the best of our knowledge use the corresponding
$r$-variational inequalities for martingales.

\item Of course, inequality \eqref{eq:16} can be reduced to the
martingale setting from Proposition \ref{prop:1} by invoking square
function arguments \cite[Lemma 3.2, p. 6722]{JSW} and standard
Littlewood--Paley theory. The details may be found in \cite{MSZ2}.

\item With respect to the previous two remarks, it would be interesting to know whether the $r$-variational counterpart of Proposition \ref{prop:2} can be proved without appealing to $r$-variational inequalities for martingales, see L{\'e}pingle's inequality \eqref{eq:42}. 
\end{enumerate}
\end{remark}

\begin{theorem}
\label{thm:1}
For $d\in\ZZ_+$ let $\phi : \RR^d\to \CC$ be a Schwartz function. For $t\in\RR_+$ and $x\in\RR^d$ define $\phi_t (x) := t^{-d} \phi(t^{-1}x)$. Then for every $p\in (1, \infty)$ one has
\begin{align}
\label{id:1}
\sup_{J \in \ZZ_+} \sup_{ I \in \mathfrak S_{J}(\RR_+) }
\norm{ O^2_{I, J} ( \phi_t * f : t \in\RR_+) }_{L^p(\RR^d)} 
\lesssim_p 
\norm{ f }_{L^p(\RR^d)}, 
\qquad f \in L^p(\RR^d).
\end{align}
\end{theorem}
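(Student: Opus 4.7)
The plan is to reduce the continuous-time oscillation estimate \eqref{id:1} to the dyadic oscillation estimate of Proposition~\ref{prop:2} combined with a classical Littlewood--Paley $g$-function bound. First I would apply the oscillation analogue \eqref{eq:70} of the long/short variation decomposition to $\mathfrak{a}_t = \phi_t * f$, which reduces \eqref{id:1} to bounding, by $\|f\|_{L^p(\RR^d)}$, the dyadic oscillation
\[
\sup_{J \in \ZZ_+}\sup_{I \in \mathfrak{S}_J(\DD)} \|O^2_{I,J}(\phi_t * f : t \in \DD)\|_{L^p(\RR^d)}
\]
and the short variation
\[
\Big\|\Big(\sum_{n \in \ZZ} V^2(\phi_t * f : t \in [2^n, 2^{n+1}])^2\Big)^{1/2}\Big\|_{L^p(\RR^d)}.
\]

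For the dyadic oscillation, let $\rho$ be the Schwartz function with $\hat{\rho} = \chi$, where $\chi$ is the cutoff from Proposition~\ref{prop:2} (well-defined since $\chi$ is smooth and compactly supported). Write $\phi = c \rho + \psi$ with $c := \hat{\phi}(0) = \int_{\RR^d}\phi$, so that $\hat{\psi}(0) = 0$. By Fourier inversion one has $\rho_{2^{-n}} * f = T_{\RR^d}[\chi_{2^{n}}] f$, hence the oscillation of $c\rho_{2^{-n}} * f$ over $n \in \ZZ$ is controlled directly by Proposition~\ref{prop:2}. The remaining term $\psi_{2^{-n}} * f$ is handled by the trivial bound \eqref{eq:4}, which dominates its oscillation seminorm pointwise by the vertical square function $(\sum_{n \in \ZZ} |\psi_{2^{-n}} * f|^2)^{1/2}$; this square function is bounded on $L^p(\RR^d)$ for every $1 < p < \infty$ by standard Littlewood--Paley theory (see Section~\ref{lp}), since $\hat{\psi}$ is Schwartz and vanishes at the origin.

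For the short variation, a direct computation shows $\partial_t \phi_t(x) = t^{-1}\tilde{\psi}_t(x)$, where $\tilde{\psi}(y) := -d\phi(y) - y\cdot\nabla\phi(y)$ is Schwartz with $\int_{\RR^d}\tilde{\psi} = 0$ by integration by parts. Using $V^2 \le V^1$, the fundamental theorem of calculus, and Cauchy--Schwarz,
\[
V^2(\phi_t * f : t \in [2^n, 2^{n+1}])^2 \le \Big(\int_{2^n}^{2^{n+1}} |\tilde{\psi}_t * f|\frac{dt}{t}\Big)^2 \lesssim \int_{2^n}^{2^{n+1}} |\tilde{\psi}_t * f|^2 \frac{dt}{t}.
\]
Summing in $n$ yields the classical Littlewood--Paley $g$-function $G(f)(x) := \bigl(\int_0^\infty |\tilde{\psi}_t * f(x)|^2\, dt/t\bigr)^{1/2}$, which is bounded on $L^p(\RR^d)$ for all $1 < p < \infty$ since $\tilde{\psi}$ is Schwartz with mean zero.

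The main obstacle is the splitting $\phi = c\rho + \psi$ in the dyadic step. A naive application of Remark~\ref{rem:33} (item 2) with $T_t = \phi_t * (\cdot)$ and $P_t := T_{\RR^d}[\chi(t\cdot)]$ fails because the difference multiplier $\hat{\phi}(t\xi) - \chi(t\xi)$ does not vanish at $\xi = 0$ unless $\int\phi = 1$, so the required vector-valued square function estimate \eqref{eq:12} breaks down. Subtracting the appropriate scalar multiple of $\rho$ produces the mean-zero remainder $\psi$ for which standard Littlewood--Paley theory applies, and the analogous mean-zero property of $\tilde{\psi}$ is precisely what makes the $g$-function in the short-variation step bounded.
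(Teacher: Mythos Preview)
Your proof is correct and follows the same route as the paper: reduce via \eqref{eq:70} to dyadic oscillation plus short $2$-variation, handle the dyadic piece by a mean-zero reduction together with Proposition~\ref{prop:2}, and control the short variation by a Littlewood--Paley $g$-function. The only cosmetic differences are that the paper phrases the mean-zero reduction as a normalization to $\int\phi=1$ followed by Remark~\ref{rem:33}(2) (equivalent to your decomposition $\phi=c\rho+\psi$), and cites \cite[Lemma~6.1]{JSW} for the short variation rather than writing out the $g$-function argument you give.
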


\begin{remark}\label{rem:partial} Theorem \ref{thm:1} immediately extends to families
of partial convolution operators. If $\RR^d = \RR^n \times \RR^m$, we write elements $x\in \RR^d$ as $x = (x',x'')$ where 
$x'\in \RR^n$ and $x''\in \RR^m$. Let $\phi$ be a Schwartz function on $\RR^n$ and define
$$
T_t f(x) \ = \ \int_{\RR^n} f(x' - y,x'') \phi_t(y) \, dy.
$$
The oscillation inequality \eqref{id:1} implies the corresponding oscillation inequality for the family 
of partial convolution operators $(T_t)_{t \in \RR_+}$.
\end{remark}

\begin{proof}[Proof of Theorem \ref{thm:1}]
To prove \eqref{id:1}, in view of \eqref{eq:70}, it suffices to show
\begin{align}
\label{4.2}
\sup_{J \in \ZZ_+} \sup_{ I \in \mathfrak S_{J}(\DD) }
\norm{ O^2_{I, J} ( \phi_t * f : t \in \DD) }_{L^p(\RR^d)} 
\lesssim_p 
\norm{ f }_{L^p(\RR^d)}, 
\qquad f \in L^p(\RR^d),
\end{align}
and
\begin{align} \label{7.1}
\norm[\Big]{ \Big( \sum_{k \in \ZZ} V^2 \big(\phi_t * f : t \in [2^k,2^{k+1}] \big)^2 \Big)^{1/2} }_{L^{p}(\RR^d)}
\lesssim
\norm{f }_{L^{p}(\RR^d)}, \qquad f \in L^{p}(\RR^d).
\end{align}

Short 2-variational estimates were treated in \cite{JSW} and in particular, the estimate \eqref{7.1} follows directly from 
\cite[Lemma 6.1]{JSW}. 
\vskip 5pt
To establish \eqref{4.2} we first observe that we may assume that $\int_{\RR^d} \phi(x)dx = 0$. Indeed, if $\int_{\RR^d} \phi(x)dx \ne 0$, then by scaling we may assume that $\int_{\RR^d} \phi(x)dx = \chi(0) = 1$ where $\chi$ appears in Proposition~\ref{prop:2}. By standard Littlewood--Paley arguments (see 
Section \ref{lp}), we note that \eqref{eq:12} holds with $T_t f = \phi_t * f$ and 
$P_t f = T_{\RR^d}[\chi_t] f$. Therefore by Remark \ref{rem:33}, we see that \eqref{4.2} follows from the 
oscillation inequality \eqref{eq:16} and so we may assume $\phi$ has mean zero.
Using \eqref{eq:4} we see that
\begin{align*} 
\text{ LHS of } \eqref{4.2} 
& \lesssim
\norm[\Big]{ \Big( \sum_{k \in \ZZ} \abs{\phi_{2^k} * f}^2 \Big)^{1/2} }_{L^p(\RR^d)} 
\lesssim \|f\|_{L^p(\RR^d)},
\end{align*}
the last inequality following directly from \cite[Theorem B]{DR}; see Section \ref{lp}.
This completes the proof of Theorem~\ref{thm:1}.
\end{proof}

\subsection{Oscillation inequalities for orthonormal systems}
We now state a result which justifies in a strong sense the importance of oscillation
inequalities.

\begin{proposition}
\label{prop:6}
Let $(X,\calB(X),\mu)$ be a $\sigma$-finite measure space such that
the corresponding Hilbert space $L^2(X)$ is endowed  with an orthonormal basis $(\Phi_n)_{n\in\NN}$. Then the
projection operators
\begin{align}
\label{eq:17}
P_nf:=\sum_{k=0}^n\langle f, \Phi_k\rangle \Phi_k, \qquad f\in L^2(X),
\end{align}
satisfy the oscillation estimate
\begin{align}
\label{eq:20-}
\sup_{J \in \ZZ_+} \sup_{ I \in \mathfrak S_{J}(\NN_{\le N}) }
\norm{ O^2_{I, J} ( P_{n} f : n \in \NN_{\le N}) }_{L^2(X)} 
\lesssim \ \log(N+1) \,
\norm{ f }_{L^2(X)}.
\end{align}
Furthermore if the projection operators $P_n$
satisfy the following maximal estimate
\begin{align}
\label{eq:18}
\norm[\big]{\sup_{n\in\NN}|P_nf|}_{L^2(X)}\lesssim \norm{f}_{L^2(X)} , \qquad f\in L^2(X),
\end{align}
then one has  the uniform bound
\begin{align}
\label{eq:20}
\sup_{J \in \ZZ_+} \sup_{ I \in \mathfrak S_{J}(\NN) }
\norm{ O^2_{I, J} ( P_{n} f : n \in \NN) }_{L^2(X)} 
\lesssim
\norm{ f }_{L^2(X)}, 
\qquad f \in L^2(X).
\end{align}
\end{proposition}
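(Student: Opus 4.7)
The plan is to apply Theorem~\ref{thm:p} with $p=r=2$, using $\II=\NN_{\le N}$ for \eqref{eq:20-} and $\II=\NN$ for \eqref{eq:20}. Since each $P_n$ is the orthogonal projection onto the nested subspace $V_n:=\mathrm{span}\{\Phi_0,\ldots,\Phi_n\}$, the identity $P_sP_t=P_{s\wedge t}$ for $s\neq t$ is immediate, verifying \eqref{eq:10}. The index set is countable, so no continuity assumption on $t\mapsto P_tf$ is required.

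The square function condition \eqref{1.15} will follow from pure orthogonality with an \emph{absolute} constant: for any $I_0<\cdots<I_J$ in $\II$, the differences $P_{I_{j+1}}-P_{I_j}$ are orthogonal projections onto the mutually orthogonal subspaces $\mathrm{span}\{\Phi_k : I_j<k\le I_{j+1}\}$, so interchanging sum with integral and applying Parseval yields
\[
\norm[\Big]{\Big(\sum_{j=0}^{J-1}|(P_{I_{j+1}}-P_{I_j})f|^2\Big)^{1/2}}_{L^2(X)}^2 = \sum_{j=0}^{J-1}\sum_{k=I_j+1}^{I_{j+1}}|\langle f,\Phi_k\rangle|^2 \le \norm{f}_{L^2(X)}^2.
\]

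To verify the vector-valued maximal bound \eqref{1.2} with $p=r=2$, I would use Fubini to reduce it to the scalar inequality $\norm{\sup_{n\in\II}|P_ng|}_{L^2(X)}\le C(\II)\norm{g}_{L^2(X)}$ applied term by term in $j$, yielding the constant $C(\II)$ in the vector-valued bound. For part (2) this scalar estimate is the standing hypothesis \eqref{eq:18}, so $C(\NN)\lesssim 1$; for part (1) the classical Rademacher--Menshov maximal inequality for partial sums of orthonormal expansions gives $C(\NN_{\le N})\lesssim \log(N+1)$. Inspecting the proof of Theorem~\ref{thm:p} shows the final oscillation constant is simply the product of the constants in \eqref{1.15} and \eqref{1.2}, so combining the two inputs produces \eqref{eq:20} with an absolute constant under \eqref{eq:18}, and \eqref{eq:20-} with constant $O(\log(N+1))$ in general. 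The only nontrivial ingredient is the Rademacher--Menshov estimate, which I regard as the sole potential obstacle; once invoked, the argument is completely automatic.
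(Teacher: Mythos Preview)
Your proposal is correct and follows essentially the same route as the paper's own proof: verify the projection identity \eqref{eq:10}, establish \eqref{1.15} with an absolute constant via orthogonality and Parseval, reduce \eqref{1.2} at $p=r=2$ to the scalar maximal inequality (Rademacher--Menshov for \eqref{eq:20-}, the hypothesis \eqref{eq:18} for \eqref{eq:20}), and then invoke Theorem~\ref{thm:p}. The only minor difference is that you make the Fubini reduction for \eqref{1.2} explicit, whereas the paper simply asserts that the scalar bound yields the vector-valued one.
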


\begin{proof}
It is easy to see that $P_n$ from \eqref{eq:17} satisfies  \eqref{eq:10}. To verify \eqref{1.15} we fix $J \in \ZZ_+$ and $I \in \mathfrak S_{J}(\NN)$ and note that by orthognality we have 
\begin{align*}
\norm[\Big]{ \Big( \sum_{j=0}^{J-1} 
\abs{ (P_{I_{j+1}}- P_{I_{j}}) f}^2 \Big)^{1/2} }_{L^2(X)}^2&=
\sum_{j=0}^{J-1} \sum_{k_1=I_j+1}^{I_{j+1}}\sum_{k_2=I_j+1}^{I_{j+1}}
\langle f, \Phi_{k_1}\rangle\overline{\langle f, \Phi_{k_2}\rangle}
\langle\Phi_{k_1}, \Phi_{k_2}\rangle\\
&\le\sum_{k\in\NN}
|\langle f, \Phi_{k}\rangle|^2\\
&=\|f\|_{L^2(X)}^2,
\end{align*}
where in the last line  we have used Parseval's identity for orthonormal bases. This proves \eqref{1.15} with $p=r=2$. 
A famous result of Rademacher \cite{Rad} and Menshov
\cite{Men} asserts that there is a constant $C>0$ such that for
any $N\in\ZZ_+$ the
projection operator $P_n$ from \eqref{eq:17} satisfies
\begin{align}
\label{eq:19}
\norm[\big]{\sup_{n\in[N]}|P_nf|}_{L^2(X)}\le C\log(N+1)\Big(\sum_{n\in[N]}|\langle f, \Phi_n\rangle|^2\Big)^{1/2} \
\lesssim \ \log(N+1) \, \|f\|_{L^2(X)}.
\end{align}
Using \eqref{eq:19} we see that \eqref{1.2} holds with $p=r=2$ with constant $\log(N+1)$. Now applying Theorem \ref{thm:p} we obtain \eqref{eq:20-}.

Under condition \eqref{eq:18}, we see that
\eqref{1.2} holds with a uniform constant for $p=r=2$ and so applying Theorem \ref{thm:p} again, we obtain \eqref{eq:20}.
\end{proof}

Proposition \ref{prop:6} is a key example in the study of oscillation
semi-norms from the point of view their importance and usefulness in
pointwise convergence problems. It exhibits, in view of inequality
\eqref{eq:9}, that oscillation estimates \eqref{eq:20} and maximal
estimates \eqref{eq:18} are equivalent in the class of orthonormal
systems. 

However, we have to emphasize that
the maximal estimate from \eqref{eq:18} is a very strong condition.  On the
one hand, we have Menshov's construction \cite{Men} of an orthonormal basis
$(\Psi_n)_{n\in\NN}\subseteq L^2([0, 1])$ and a function
$f_0\in L^2([0, 1])$ with almost everywhere diverging partial sums
$\sum_{k=0}^n\langle f, \Psi_k\rangle \Psi_k$. Therefore maximal
estimate \eqref{eq:18} for Menshov's system cannot hold. In fact,
the best what we can expect in the general case is the Rademacher--Menshov bound \eqref{eq:19}.
The above-mentioned Menshov's \cite{Men} construction also shows that
\eqref{eq:19} is sharp and the logarithm in \eqref{eq:19} cannot be
removed. 

On the other hand, there is the famous result of
Carleson \cite{Carleson} which led to establishing
\eqref{eq:18} for the canonical trigonometric system
$(\ex(n\xi))_{n\in\ZZ}$ on $L^2([0, 1])$ (see also
\cite{Hunt, F1, LT1}). 

\subsection{Oscillation inequalities for the Carleson operator}
In this subsection we obtain certain $r$-oscillation estimates for partial Fourier integrals on the real line $\RR$.
\vskip 5pt
The Carleson operator $\calC_t$
is defined by 
\begin{align}
\label{eq:23}
\calC_t f(x) := T_{\RR}[\ind{[-t, t]}]f(x)= \int_{-t}^t \calF_{\RR}f(\xi) \ex(-x \xi) \, d\xi, 
\qquad f \in \calS(\RR), \quad x\in\RR,\quad  t \in \RR_+.
\end{align}
The celebrated  Carleson--Hunt theorem (see the papers of Carleson \cite{Carleson} and Hunt
\cite{Hunt}) asserts that for every $p\in(1, \infty)$ there is a
constant $C_p>0$ such that
\begin{align}
\label{eq:21}
\norm[\big]{\sup_{t>0}|\calC_t f|}_{L^p(\RR)}\le C_p\norm{f}_{L^p(\RR)}, \qquad f\in L^p(\RR).
\end{align}
\begin{remark} \label{rem:4}
A few remarks about the Carleson--Hunt theorem are in order. 

\begin{enumerate}[label*={\arabic*}.]

\item Carleson \cite{Carleson} originally proved that the maximal
partial sum operator of Fourier series corresponding to
square-integrable functions on the circle is weak type $(2, 2)$. Not
long afterwards this result was extended by Hunt \cite{Hunt} who
proved that the maximal partial sum operator of Fourier series is
bounded on $L^p(\TT)$ for any $p\in(1, \infty)$.  
\item Kenig and Tomas \cite{KT} used a transplantation
arugment to show the latter
result is equivalent to inequality \eqref{eq:21}. This equivalence was extended to variation and oscillation inequalities
in \cite{OSTTW}. The foundational  work of  Kolmogorov \cite{Kol1, Kol2} shows
that the range of $p\in(1, \infty)$ in inequality \eqref{eq:21} is sharp.
\item An alternative proof of Carleson's theorem was provided by
Fefferman \cite{F1}, who pioneered the ideas of the so called time--frequency analysis.
\item Lacey and Thiele \cite{LT1} established an independent
proof on the real line of the weak type $(2, 2)$ boundedness of the
maximal Fourier integral operator \eqref{eq:23}. The latter bound was
extended by Grafakos, Tao, and Terwilleger \cite{GTT} to \eqref{eq:21} for all $p\in(1, \infty)$, see also \cite{PT}.
\item Inequality \eqref{eq:21} was extended to vector-valued
setting by Grafakos, Martell and Soria \cite{GMS}, who proved that
that for every $p, r\in(1, \infty)$ there is a constant $C_{p, r}>0$ such that
\begin{align}
\label{eq:26}
\qquad \quad \norm[\Big]{ \Big( \sum_{j \in \ZZ} \sup_{t>0}|\calC_t f_j|^r \Big)^{1/r} }_{L^p(\RR)} 
\le C_{p, r}
\norm[\Big]{ \Big( \sum_{j \in \ZZ} \abs{f_j}^r \Big)^{1/r} }_{L^p(\RR)}, \qquad (f_j)_{j\in\ZZ}\in L^p(X; \ell^r(\ZZ)).
\end{align}

\item We finally refer to the survey of Lacey \cite{La1}, where
details (including comprehensive historical background) and an
extensive literature are given about this fascinating subject of
pointwise convergence of Fourier series and related topics.
\end{enumerate}
\end{remark}

A far-reaching quantitative extension of \eqref{eq:21} was obtained
by the third author in collaboration with Oberlin, Seeger, Tao and
Thiele \cite{OSTTW}, which asserts that for every $p\in(1, \infty)$
and for every $r>\max\big\{2, \frac{p}{p-1}\big\}$ there is a constant
$C_{p, r}>0$ such that
\begin{align}
\label{eq:24}
\norm[\big]{V^r(\calC_t f: t\in\RR_+)}_{L^p(\RR)}\le C_{p, r}\norm{f}_{L^p(\RR)}, \qquad f\in L^p(\RR).
\end{align}
See also in \cite{U} for a different proof using outer measures. 
Furthermore a restricted weak-type bound is established at the endpoint $p = r'$ when $p\in (1,2)$ (here $r' = r/(r-1)$) and it is open whether weak type $(p,p)$ holds true.
It also follows from \cite{OSTTW} that the ranges of parameter
$p\in(1, \infty)$ and $r>\max\big\{2,p'\big\}$ in
\eqref{eq:24} are sharp. In the endpoint case $p=r'$, the Lorentz space $L^{r',\infty}$ cannot be replaced by a 
smaller Lorentz space. For weighted variational estimates for the Carleson operator, see  \cite{DL}, and \cite{DDU} and the references given there.

Inequality \eqref{eq:24}, in view of inequality \eqref{eq:62},
immediately implies that for every $p\in(1, \infty)$
and for every $r>\max\{2, p'\}$ there is a constant
$C_{p, r}>0$ (actually the same as in \eqref{eq:24}) such that
\begin{align}
\label{eq:25}
\sup_{J \in \ZZ_+} \sup_{ I \in \mathfrak S_{J}(\RR_+) }
\norm[\big]{O^r_{I, J}(\calC_t f: t\in\RR_+)}_{L^p(\RR)}\le C_{p, r}\norm{f}_{L^p(\RR)}, \qquad f\in L^p(\RR).
\end{align}
For applications of \eqref{eq:24} and \eqref{eq:25} to the Wiener-Wintner theorem in ergodic theory, see \cite{LT} and 
\cite{OSTTW}.

It has been observed by M. Lacey \cite{lacey} (see \cite{RW} for the case $p=2$)
that \eqref{eq:25} remains true for  $r=2$ whenever $p\in[2, \infty)$. Furthermore, 
this can be extend to all $p>1$ when we restrict the $t$ parameter in $\calC_t$ to dyadic numbers $t\in {\mathbb D}$.
Our aim here is to show how these results follow as an immediate consequence of Theorem \ref{thm:p}. 

\begin{proposition} \label{prop:car}
Let $(\calC_t)_{t \in \RR_+}$ be  as in \eqref{eq:23}. 
Then for every $p\in[2, \infty)$,
 there exists a constant $C_p>0$ such that 
\begin{align}
\label{eq:27}
\sup_{J \in \ZZ_+} \sup_{ I \in \mathfrak S_{J}(\RR_+) }
\norm[\big]{O^2_{I, J}(\calC_t f: t\in\RR_+)}_{L^p(\RR)}
\le C_p 
\norm{ f }_{L^p(\RR)}, 
\qquad f \in L^p(\RR).
\end{align}
Furthermore for $(\calC_t)_{t\in {\mathbb D}}$, we have
\begin{align}
\label{eq:27+}
\sup_{J \in \ZZ_+} \sup_{ I \in \mathfrak S_{J}({\mathbb D}) }
\norm[\big]{O^2_{I, J}(\calC_t f: t\in{\mathbb D})}_{L^p(\RR)}
\le C_p 
\norm{ f }_{L^p(\RR)}, 
\qquad {\rm for \ all} \ p \in(1, \infty).
\end{align}
\end{proposition}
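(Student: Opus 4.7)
The plan is to deduce both~\eqref{eq:27} and~\eqref{eq:27+} from the abstract projection principle of Theorem~\ref{thm:p} applied with $P_t := \calC_t$. First I would verify the projection identity~\eqref{eq:10}. This is immediate from the Fourier-side computation
\begin{equation*}
\calC_s \calC_t \;=\; T_{\RR}[\ind{[-s,s]}\, \ind{[-t,t]}] \;=\; T_{\RR}[\ind{[-(s\wedge t),\, s\wedge t]}] \;=\; \calC_{s\wedge t}.
\end{equation*}
For the continuity assumption in the case $\II = \RR_+$, a standard density argument (using Schwartz functions, for which $t \mapsto \calC_t f(x)$ is manifestly continuous for every $x$, combined with the Carleson--Hunt maximal bound~\eqref{eq:21}) shows that $t \mapsto \calC_t f$ is continuous almost everywhere for every $f \in L^p(\RR)$.

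Second, the vector-valued hypothesis~\eqref{1.2} with $r = 2$ is precisely the Grafakos--Martell--Soria inequality~\eqref{eq:26}, which holds for all $p \in (1,\infty)$. This step places no restriction on $p$ and so does not discriminate between the two conclusions.

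The main obstacle, and the step which forces the dichotomy in $p$, is verifying the square function hypothesis~\eqref{1.15}. The key observation is that for any strictly increasing sequence $I$ in $\RR_+$,
\begin{equation*}
\calC_{I_{j+1}} - \calC_{I_j} \;=\; T_{\RR}\bigl[\ind{[I_j,\, I_{j+1}]}\bigr] + T_{\RR}\bigl[\ind{[-I_{j+1},\, -I_j]}\bigr],
\end{equation*}
and the $2J$ intervals $\{[I_j,I_{j+1}]\}_{j=0}^{J-1} \cup \{[-I_{j+1},-I_j]\}_{j=0}^{J-1}$ are pairwise disjoint. For~\eqref{eq:27} I would invoke the Rubio de Francia square function theorem recalled in Section~\ref{lp}, which delivers the required uniform bound precisely in the range $p \in [2,\infty)$. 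For~\eqref{eq:27+}, I additionally exploit that when $I \in \mathfrak S_J(\DD)$ consecutive terms satisfy $I_{j+1} \ge 2 I_j$, so the intervals $[I_j,I_{j+1}]$ are lacunary; the required square function estimate then follows from the classical Littlewood--Paley theory for lacunary frequency decompositions (again see Section~\ref{lp}), which is valid on the full range $p \in (1,\infty)$.

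With these two hypotheses of Theorem~\ref{thm:p} verified in the respective ranges, the conclusion~\eqref{1.3} of that theorem yields~\eqref{eq:27} and~\eqref{eq:27+} directly. The essential point is that the restriction to $p \ge 2$ in the continuous-time statement comes entirely from the sharpness of Rubio de Francia's bound for arbitrary disjoint intervals, while the dyadic restriction unlocks the lacunary Littlewood--Paley theory and thereby the full range $p > 1$.
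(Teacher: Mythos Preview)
Your proposal is correct and follows essentially the same route as the paper: verify the projection identity~\eqref{eq:10}, check the vector-valued hypothesis~\eqref{1.2} via Grafakos--Martell--Soria~\eqref{eq:26}, and check the square function hypothesis~\eqref{1.15} via Rubio de Francia for arbitrary intervals when $p\ge 2$ and via classical dyadic Littlewood--Paley when the sequence lies in $\DD$, then invoke Theorem~\ref{thm:p}. You supply a little more detail (the explicit Fourier-side verification of~\eqref{eq:10}, the continuity remark, and the decomposition of $\calC_{I_{j+1}}-\calC_{I_j}$ into two disjoint frequency projections), but the argument is the same.
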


\begin{proof}
Observe the operators  $(\calC_t)_{t \in \RR_+}$ are projections in the sense of \eqref{eq:10}. 
Moreover when the sequence $(I_j)_{j\in\NN} \subset {\mathbb D}$ lies among the dyadic numbers, the bound
\begin{align*}
\sup_{J\in\ZZ_+}\sup_{I\in \mathfrak{S}_J({\mathbb D})} 
\norm[\Big]{ \Big( \sum_{j=0}^{J-1} 
\abs{ (\calC_{I_{j+1}}- \calC_{I_{j}}) f}^2 \Big)^{1/2} }_{L^p(\RR)}
\lesssim_{p} 
\norm{f}_{L^p(\RR)}, \qquad p \in(1, \infty),
\end{align*}
follows from the classical Littlewood--Paley inequality associated to dyadic intervales (no need to refer to the refinements of the theory from Section \ref{lp}). This
verifies \eqref{1.15} with $r=2$ and $p\in(1, \infty)$ in the dyadic case.
Furthermore by Rubio de Francia's square function  theorem for intervals (see Section \ref{lp}), one has for every $p\in[2, \infty)$ that
\begin{align*}
\sup_{J\in\ZZ_+}\sup_{I\in \mathfrak{S}_J(\RR_+)} 
\norm[\Big]{ \Big( \sum_{j=0}^{J-1} 
\abs{ (\calC_{I_{j+1}}- \calC_{I_{j}}) f}^2 \Big)^{1/2} }_{L^p(\RR)}
\lesssim_{p} 
\norm{f}_{L^p(\RR)}, \qquad f \in L^p(\RR),
\end{align*}
which verifies \eqref{1.15} with $r=2$ and $p\in[2, \infty)$. Using
\eqref{eq:26} with $r=2$ we also see that \eqref{1.2} is verified with
$r=2$ and $p\in (1, \infty)$. Thus invoking Theorem \ref{thm:p}
inequalities \eqref{eq:27} and \eqref{eq:27+} follow.
\end{proof}

Proposition \ref{prop:car} for $p=2$ was established 
by Rosenblatt and Wierdl \cite[inequality (4.12), p. 82]{RW}. 
In \cite{LT}, Lacey and Terwilleger established \eqref{eq:27+} for $p\in (1,\infty)$. Proposition \ref{prop:car} gives a simple proof of these results.

In view of inequality \eqref{eq:9} it is not difficult to see that the maximal estimates \eqref{eq:21} and the oscillation estimates \eqref{eq:27} for the Carleson operator are equivalent for all $p\in[2, \infty)$.

We also remark that the proof above also gives a proof of \eqref{eq:25} which does not appeal to the variational inequality \eqref{eq:24}. 
Indeed, Rubio de Francia's result in \cite[inequality (7.1), p. 10]{RdF} states that for every $p\in(1, 2)$ and $r>p'$ one has
\begin{align}
\label{eq:54}
\sup_{J\in\ZZ_+}\sup_{I\in \mathfrak{S}_J(\RR_+)} 
\norm[\Big]{ \Big( \sum_{j=0}^{J-1} 
\abs{ (\calC_{I_{j+1}}- \calC_{I_{j}}) f}^r \Big)^{1/r} }_{L^p(\RR)}
\lesssim_{p,r} 
\norm{f}_{L^p(\RR)}, \qquad f \in L^p(\RR).
\end{align}
Hence using \eqref{eq:54} and \eqref{eq:26} and invoking Theorem \ref{thm:p} we obtain the desired claim in \eqref{eq:25}.

A counterexample of Cowling and Tao \cite{CT} to Rubio de Francia's conjecture in \cite[Conjecture 7.2]{RdF}  shows that for all $p\in(1, 2)$, one has
\begin{align*}
\sup_{\|f\|_{L^p(\RR)}\le1}\sup_{I\in \mathfrak{S}_\infty(\RR_+)} 
\norm[\Big]{ \Big( \sum_{j=0}^{\infty} 
\abs{ (\calC_{I_{j+1}}- \calC_{I_{j}}) f}^{r} \Big)^{1/{r}} }_{L^p(\RR)}=\infty, \qquad \text{ where } \qquad r=\frac{p}{p-1}.
\end{align*}
Therefore \eqref{eq:25} for $r=p'$ with $p\in(1, 2)$ cannot hold. This shows that the range of $p$ and $r$ in \eqref{eq:25} and \eqref{eq:27} is sharp.

%%%%%%%%%%%%%%%%%%%%%%%%%%%%%%%%%%%%%%%%%%%%%%%%%%%%%%%%%%%%%%%%%%%%%%%%%%%%%
\section{Multi-parameter oscillation estimates}
\label{section:4}
%%%%%%%%%%%%%%%%%%%%%%%%%%%%%%%%%%%%%%%%%%%%%%%%%%%%%%%%%%%%%%%%%%%%%%%%%%%%%
In this section we establish Theorem \ref{thm:main}.  We begin with
proving an abstract multi-parameter
oscillation result, which may be of independent interest. Before we do
this we need more notation. For linear operator $T: L^0(X)\to L^0(X)$
we shall denote by $\abs{T}$ the sublinear maximal operator taken in
the lattice sense defined by
\[
\abs{T} f (x) = \sup_{\abs{g} \le \abs{f}} \abs{Tg (x)}, \qquad x \in X, \text{ and }  f\in L^p(X).
\]
For two linear operators $S,T: L^0(X)\to L^0(X)$ we have $\abs{ST}f \le \abs{S} \abs{T}f$ whenever $f\in L^0(X)$.
\begin{proposition} \label{prop:8}
Let $(X,\mathcal B(X), \mu)$ be a $\sigma$-finite measure space and let $\II\subseteq \RR$ be such that $\#\II\ge2$.
Let $k \in \NN_{\ge2}$ and  $p, r\in(1, \infty)$ be fixed. Let $(T_t)_{t\in\II^k}$ be a family of linear operators of the form
\begin{align*}
T_t :=  T_{t_1}^{1} \cdots T_{t_k}^{k},
%:L^0(x)\to L^0(X), 
\qquad t=(t_1,\ldots, t_k)\in\II^k,
\end{align*}
where
$\{ T_{t_i}^i : i \in [k], \, t_i \in \II \}$ is a family of commuting linear operators, which are bounded on $L^p(X)$.
If the set $\II$ is uncountable then we also assume that $\II\ni t\mapsto T_t^if$ is
continuous $\mu$-almost everywhere on $X$ for every $f\in L^0(X)$ and  $i\in[k]$. Further assume that for every $i\in[k]$, we have
\begin{align}
\label{eq:28}
\sup_{J \in \ZZ_+} \sup_{ I \in \mathfrak S_{J}(\II)}
\norm{ O^r_{I, J} ( T_{t}^{i} f : t \in \II) }_{L^p(X)} 
& \lesssim_{p, r} 
\norm{ f }_{L^p(X)}, 
\qquad f \in L^p(X), 
\end{align}
and
\begin{align}
\label{eq:29}
\norm[\Big]{ \Big( \sum_{j \in \ZZ} \big( \sup_{t \in \II} \abs{T_t^i} \abs{f_j} \big)^r \Big)^{1/r} }_{L^{p}(X)}
& \lesssim_{p, r}
\norm[\Big]{ \Big( \sum_{j \in \ZZ} \abs{f_j}^r  \Big)^{1/r} }_{L^{p}(X)}, \qquad (f_j)_{j \in \ZZ} \in L^{p}(X;\ell^r(\ZZ)).
\end{align}
Then we have the following multi-parameter $r$-oscillation estimate:
\begin{align*}
\sup_{J \in \ZZ_+} \sup_{ I \in \mathfrak S_{J}(\II^k) }
\norm{ O^r_{I, J} ( T_{t} f : t \in \II^k) }_{L^p(X)} 
\lesssim
\norm{ f }_{L^p(X)}, 
\qquad f \in L^p(X).
\end{align*}
\end{proposition}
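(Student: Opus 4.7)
The natural approach is induction on $k \ge 1$, the base case $k = 1$ being exactly hypothesis \eqref{eq:28}. For the inductive step, I would factor $T_t = T_{t_k}^k \, S_{t'}$ with $t' = (t_1, \ldots, t_{k-1})$ and $S_s := T_{s_1}^1 \cdots T_{s_{k-1}}^{k-1}$, fix $I = (I_j)_{j=0}^J \in \mathfrak{S}_J(\II^k)$, and note that both projections $\pi'(I) := (I_j')_{j=0}^J \in \mathfrak{S}_J(\II^{k-1})$ and $(I_{j,k})_{j=0}^J \in \mathfrak{S}_J(\II)$ remain strictly increasing since $I$ is strictly increasing in the coordinatewise order. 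Using commutativity, the identity
\begin{equation*}
T_t f - T_{I_j} f = T_{t_k}^k (S_{t'} - S_{I_j'}) f + S_{I_j'} (T_{t_k}^k - T_{I_{j,k}}^k) f, \qquad t \in \BB[I_j] = \BB[I_j'] \times [I_{j,k}, I_{j+1,k}),
\end{equation*}
combined with the $\ell^r$-triangle inequality splits the oscillation seminorm into two pieces to be estimated separately.

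For the first piece, the key observation is that $|Th_s| \le |T| \sup_s |h_s|$ pointwise for any family $(h_s)$ of functions, a direct consequence of the definition $|T|f = \sup_{|g| \le |f|} |Tg|$ together with the monotonicity of $|T|$ in its input. Applied with $T = T_{t_k}^k$ inside a further supremum over $t_k$, this yields
\begin{equation*}
\sup_{t_k} \sup_{t' \in \BB[I_j']} \bigl| T_{t_k}^k (S_{t'} - S_{I_j'}) f \bigr| \le \bigl(\sup_{t_k \in \II} |T_{t_k}^k|\bigr) g_j, \qquad g_j := \sup_{t' \in \BB[I_j']} |(S_{t'} - S_{I_j'}) f|.
\end{equation*}
The vector-valued hypothesis \eqref{eq:29} for $T^k$ will then bound the $L^p(X)$-norm of $\bigl(\sum_j ((\sup_{t_k}|T_{t_k}^k|)g_j)^r\bigr)^{1/r}$ by that of $\bigl(\sum_j g_j^r\bigr)^{1/r}$, which is precisely the $(k-1)$-parameter $r$-oscillation seminorm of $(S_s f)_{s \in \II^{k-1}}$ along $\pi'(I)$. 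The inductive hypothesis closes this estimate with the bound $\lesssim \norm{f}_{L^p(X)}$.

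For the second piece, the expression depends only on $t_k$, and iterating $|TT'h| \le |T||T'h| \le |T||T'||h|$ together with the pointwise bound $|T_{I_{j,i}}^i| \le \sup_{t_i \in \II}|T_{t_i}^i|$ gives
\begin{equation*}
\sup_{t_k} \bigl| S_{I_j'} (T_{t_k}^k - T_{I_{j,k}}^k) f \bigr| \le \Bigl(\prod_{i=1}^{k-1} \sup_{t_i \in \II} |T_{t_i}^i|\Bigr) h_j, \qquad h_j := \sup_{t_k \in [I_{j,k}, I_{j+1,k})} |(T_{t_k}^k - T_{I_{j,k}}^k) f|.
\end{equation*}
Iterating \eqref{eq:29} once for each $T^i$ with $i = 1, \ldots, k-1$ peels off the product operator and reduces matters to the $L^p(X)$-norm of $\bigl(\sum_j h_j^r\bigr)^{1/r}$, which is the one-parameter $r$-oscillation of $(T_t^k f)_{t \in \II}$ along $(I_{j,k})_{j=0}^J$ and is controlled by \eqref{eq:28}. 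The principal technical obstacle will be the careful use of the lattice maximal operator to exchange an outer operator with a supremum over a function-valued family; this hinges on the monotonicity of $|T|$ in its input and is the main reason for working within the lattice framework rather than with sublinear substitutes at the level of individual operators.
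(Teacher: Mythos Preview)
Your proof is correct and uses essentially the same ingredients as the paper: a telescoping identity for $T_t - T_{I_j}$, the lattice bound $|Th|\le |T||h|$ together with monotonicity of $|T|$ to push the outer operators past the suprema, iteration of the vector-valued maximal hypothesis \eqref{eq:29}, and the one-parameter oscillation hypothesis \eqref{eq:28}. The only organizational difference is that you proceed by induction on $k$, splitting off one factor via the two-term identity $T_t - T_{I_j} = T_{t_k}^k(S_{t'}-S_{I_j'}) + S_{I_j'}(T_{t_k}^k - T_{I_{j,k}}^k)$, whereas the paper writes out the full $k$-term telescope $T_n - T_{I_j} = \sum_{m=1}^k T^{(m)}_{n(m,n,I_j)}(T^m_{n_m}-T^m_{I_{jm}})$ at once and handles each term directly; unrolling your induction recovers (a mirror version of) this decomposition, so the two arguments are equivalent.
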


\begin{proof}
For $i \in [k]$ and $n = (n_1, \ldots, n_{i-1}, n_{i+1}, \ldots, n_{k}) \in \II^{k-1}$, let us denote 
\[
T_{n}^{(i)}:  = T_{n_1}^{1} \cdots T_{n_{i-1}}^{i-1} T_{n_{i+1}}^{i+1} 
\cdots T_{n_k}^{k}.
\]
Using this definition, the bound \eqref{eq:29} and proceeding inductively we easily see that
\begin{align} \label{3.3.2}
\norm[\Big]{ \Big( \sum_{j \in \ZZ} \big( \sup_{n \in \II^{k-1}} \abs{T_{n}^{(i)}} \abs{f_j} \big)^r \Big)^{1/r} }_{L^{p}(X)}
& \lesssim_p
\norm[\Big]{ \Big( \sum_{j \in \ZZ} \abs{f_j}^r  \Big)^{1/r} }_{L^{p}(X)},
\end{align}
uniformly in $i \in [k]$ and $(f_j)_{j \in \ZZ} \in L^{p}(X;\ell^r(\ZZ))$.
Furthermore for $n \in \II^k$ and $I_j=(I_{j1},\ldots, I_{jk})\in\II^k$,  we have the identity
\begin{align}
\label{eq:30}
T_n f - T_{I_{j}} f 
=
\sum_{m=1}^k T_{n(m,n,I_{j})}^{(m)} (T_{n_{m}}^{m} - T_{I_{jm}}^{m}) f,
\end{align}
where 
$n(m,n,I_{j}) := (n_1, \ldots, n_{m-1}, I_{j(m+1)}, \ldots, I_{jk}) \in \II^{k-1}$.

We now fix $J\in\ZZ_+$ and a sequence $I \in \mathfrak S_{J}(\II^k)$.
Applying the identity \eqref{eq:30}, the triangle inequality, the bound \eqref{3.3.2} applied to 
$f_j^m = \sup_{\substack{ I_{jm} \le n_m < I_{(j+1)m} \\ n_m \in \II}}
\abs{T_{n_m}^{m} f - T_{I_{jm}}^{m} f}$ and \eqref{eq:28}, we obtain
\begin{align*} 
& \norm[\Big]{ \Big( \sum_{j =0}^{J-1}   \sup_{n \in \BB[I_{j}]\cap\II^k} \abs{T_{n}f - T_{I_{j}}f}^r \Big)^{1/r} }_{L^{p}(X)} \\
& \qquad \le 
\sum_{m=1}^k \norm[\Big]{ \Big( \sum_{j =0}^{J-1} 
	\big(  \sup_{n \in \BB[I_{j}]\cap\II^k}\abs{T_{n(m,n,I_{j})}^{(m)}} \abs{T_{n_m}^{m} f - T_{I_{jm}}^{m} f} \big)^r \Big)^{1/r} }_{L^{p}(X)} \\
& \qquad \le 
\sum_{m=1}^k \norm[\Big]{ \Big( \sum_{j =0}^{J-1}  
 \Big( \sup_{n \in \II^{k-1}}  \abs{T_{n}^{(m)}} 
\big( \sup_{\substack{ I_{jm} \le n_m < I_{(j+1)m} \\ n_m \in \II}}
\abs{T_{n_m}^{m} f - T_{I_{jm}}^{m} f} \big) \Big)^r \Big)^{1/r} }_{L^{p}(X)} \\
& \qquad \lesssim
\sum_{m=1}^k \norm[\Big]{ \Big( \sum_{j =0}^{J-1}  
\sup_{\substack{ I_{jm} \le n_m < I_{(j+1)m} \\ n_m \in \II}}
 \abs{T_{n_m}^{m} f - T_{I_{jm}}^{m} f}^r \Big)^{1/r} }_{L^{p}(X)} \ \lesssim \ \norm{f}_{L^{p}(X)}.
\end{align*}
This completes the proof of Proposition~\ref{prop:8}.
\end{proof}

We have a simple consequence of the above result.

\begin{corollary} \label{cor:2} Let $k \in \NN_{\ge2}$ and fix parameters
$n_1, \ldots, n_k \in \ZZ_+$, and  $p\in(1, \infty)$.  For every  $i\in[k]$ let
$\phi^{i}:\RR^{n_i}\to \CC$ be a Schwartz function, and define
$\phi_{t_i}^{i}(x):= t_{i}^{-n_i} \phi^{i} (t_i^{-1}x)$ for every
$t_i \in\RR_+$ and $x \in \RR^{n_i}$.
Set $N := n_1 + \ldots + n_k$ and for $t = (t_1, \ldots, t_k)\in \RR_+^k$ and $x = (x_1, \ldots, x_k) \in \RR^N:=\RR^{n_1}\times\ldots\times\RR^{n_k}$, consider the operator $T_{t}: L^p(\RR^{N})\to L^p(\RR^{N})$ defined by
\begin{align*} 
T_{t} f (x) 
:= \int_{\RR^{n_1}}\ldots\int_{\RR^{n_k}} \Big( \prod_{i=1}^k \phi_{t_i}^{i}(z_i) \Big) f(x-z) \, dz_1\ldots dz_k, \qquad z = (z_1, \ldots, z_k).
\end{align*}
Then  we have the following multi-parameter oscillation estimate
\begin{align} \label{id:3}
\sup_{J \in \ZZ_+} \sup_{ I \in \mathfrak S_{J}(\RR_+^k) }
\norm{ O^2_{I, J} ( T_{t} f : t \in \RR_+^k) }_{L^p(\RR^N)} 
\lesssim_p 
\norm{ f }_{L^p(\RR^N)}, 
\qquad f \in L^p(\RR^N).
\end{align}
\end{corollary}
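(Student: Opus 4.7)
The plan is to apply the abstract multi-parameter oscillation principle in Proposition~\ref{prop:8} with $\II = \RR_+$, $r = 2$, and a suitable factorization of $T_t$ into commuting one-parameter families. Write $\RR^N = \RR^{n_1} \times \cdots \times \RR^{n_k}$ and, for each $i \in [k]$, let $T^i_{t_i}$ denote the partial convolution operator acting in the $i$-th block of variables, namely
\begin{align*}
T^i_{t_i} f(x) := \int_{\RR^{n_i}} \phi^i_{t_i}(z_i) \, f(x_1, \ldots, x_{i-1}, x_i - z_i, x_{i+1}, \ldots, x_k) \, dz_i.
\end{align*}
Then $T_t = T^1_{t_1} \circ \cdots \circ T^k_{t_k}$ by Fubini, and since the operators $T^i_{t_i}$ act on disjoint sets of variables, they trivially commute with one another.

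Next, I would verify the two hypotheses of Proposition~\ref{prop:8}. For the one-parameter oscillation estimate \eqref{eq:28} with $r = 2$: freezing all variables except $x_i$, the family $(T^i_{t_i})_{t_i \in \RR_+}$ is exactly the family of partial convolutions considered in Remark~\ref{rem:partial}, and Theorem~\ref{thm:1} together with that remark delivers
\begin{align*}
\sup_{J \in \ZZ_+} \sup_{ I \in \mathfrak S_{J}(\RR_+)}
\norm{ O^2_{I, J} ( T_{t}^{i} f : t \in \RR_+) }_{L^p(\RR^N)}
\lesssim_{p}
\norm{ f }_{L^p(\RR^N)},
\end{align*}
after integrating in the remaining variables via Fubini.

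For the vector-valued maximal estimate \eqref{eq:29} with $r = 2$: since $\phi^i$ is Schwartz, its radial majorant is integrable, so a standard pointwise bound yields $\sup_{t_i > 0} \abs{T^i_{t_i}} \abs{g}(x) \lesssim M^{(i)} \abs{g}(x)$, where $M^{(i)}$ denotes the Hardy--Littlewood maximal operator in the $x_i$ variable (with the other variables frozen). The Fefferman--Stein vector-valued maximal inequality, applied in the $x_i$ variable and then integrated via Fubini in the remaining variables, gives
\begin{align*}
\norm[\Big]{ \Big( \sum_{j \in \ZZ} \big( \sup_{t_i > 0} \abs{T^i_{t_i}} \abs{f_j} \big)^2 \Big)^{1/2} }_{L^{p}(\RR^N)}
\lesssim_{p}
\norm[\Big]{ \Big( \sum_{j \in \ZZ} \abs{f_j}^2 \Big)^{1/2} }_{L^{p}(\RR^N)},
\end{align*}
uniformly in $i \in [k]$.

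With both hypotheses verified, Proposition~\ref{prop:8} applied with $\II = \RR_+$, $r = 2$, and the operators $T^1_{t_1}, \ldots, T^k_{t_k}$ just defined yields the desired multi-parameter oscillation bound \eqref{id:3}. No step is really an ``obstacle'': the corollary is essentially a direct packaging of Theorem~\ref{thm:1} (for the one-parameter input) with Proposition~\ref{prop:8} (for the multi-parameter upgrade), the only minor point requiring care being to ensure continuity in $t_i$ almost everywhere so that the supremum over the uncountable index set $\RR_+$ is well-posed, which follows from the Schwartz regularity of the $\phi^i$.
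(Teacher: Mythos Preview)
Your proposal is correct and follows essentially the same approach as the paper: factor $T_t$ into commuting partial convolution operators $T^i_{t_i}$, verify \eqref{eq:28} via Theorem~\ref{thm:1} and Remark~\ref{rem:partial}, verify \eqref{eq:29} via the Fefferman--Stein vector-valued maximal inequality, and apply Proposition~\ref{prop:8}. Your version is slightly more explicit about the pointwise domination by $M^{(i)}$ and about the almost-everywhere continuity in $t_i$, both of which the paper leaves implicit.
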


\begin{proof}
For $i \in [k]$ and $z_i \in \RR^{n_i}$ we denote by $z_i^{(i)}=(z^{(i)}_1,\ldots, z^{(i)}_k)$ the point in $\RR^N=\RR^{n_1}\times\ldots\times\RR^{n_k}$ such that $z^{(i)}_j = \ind{\{j\}}(i) z_i\in \RR^{n_i}$ for any $j \in [k]$. We define the operators $T_{t_i}^i: L^p(\RR^{N})\to L^p(\RR^{N})$ by
\begin{align*} 
T_{t_i}^i f (x) :
= \int_{\RR^{n_i}} \phi_{t_i}^{i}(z_i) f(x-z_i^{(i)}) \, dz_i,
\qquad x = (x_1, \ldots, x_k) \in \RR^N, \quad t_i \in\RR_+.
\end{align*}
These operators commute and we have
$T_{t} = T_{t_1}^1 \circ\ldots\circ T_{t_k}^k$.
Furthermore, these are partial convolution operators with Schwartz functions and so Theorem \ref{thm:1} (see Remark \ref{rem:partial})
implies that the oscillation estimate \eqref{eq:28} holds for the family $(T^i_t)_{t \in \RR_+}$, for each $i \in [k]$. 
Finally, the Fefferman--Stein
vector-valued maximal inequality shows that \eqref{eq:29} holds and so
Proposition~\ref{prop:8} gives us the desired conclusion \eqref{id:3}. This completes the proof of
Corollary~\ref{cor:2}.
\end{proof}

We close this section by establishing the main ergodic result of this survey.

\begin{proof}[Proof of Theorem \ref{thm:main}]
We will invoke Proposition \ref{prop:8} with $k=d$ and $r=2$. As in \eqref{eq:50} note that
\begin{align*}
A_{M; X, \calT}^{P_1 ({\mathrm m}_1),\ldots, P_d ({\mathrm m}_d)}f=A_{M_1,\ldots, M_d; X, T_1,\ldots, T_d}^{P_1 ({\mathrm m}_1),\ldots, P_d ({\mathrm m}_d)}f=A_{M_1; X, T_1}^{P_1({\mathrm m}_1)}\circ\ldots\circ A_{M_d; X, T_d}^{P_d({\mathrm m}_d)}f,
\end{align*}
where the averages
$A_{M_1; X, T_1}^{P_1({\mathrm m}_1)},\ldots, A_{M_d; X, T_d}^{P_d({\mathrm m}_d)}$ commute. Thus it
remains to verify \eqref{eq:28} and \eqref{eq:29}. We fix $j\in[d]$. For \eqref{eq:28} we refer to \cite[Theorem 1.4]{MSS}, which ensures that for every $p\in(1,\infty)$ one has
\begin{align*}
\sup_{J \in \ZZ_+} \sup_{ I \in \mathfrak S_{J}(\ZZ_+)}
\norm{ O^2_{I, J} ( A_{M_j; X, T_j}^{P_j({\mathrm m}_j)} : M_j \in \ZZ_+) }_{L^p(X)} 
& \lesssim_{p} 
\norm{ f }_{L^p(X)}, 
\qquad f \in L^p(X).
\end{align*}
For \eqref{eq:29} we refer to \cite[Theorem C]{MST1}, which guarantees that for every $p\in(1,\infty)$ one has
\begin{align*}
\norm[\Big]{ \Big( \sum_{\iota \in \ZZ} \big( \sup_{M_j \in \ZZ_+} \abs{A_{M_j; X, T_j}^{P_j({\mathrm m}_j)}}|f_{\iota}| \big)^2 \Big)^{1/2} }_{L^{p}(X)}
& \lesssim_{p}
\norm[\Big]{ \Big( \sum_{\iota \in \ZZ} \abs{f_\iota}^2  \Big)^{1/2} }_{L^{p}(X)}, \qquad (f_j)_{j \in \ZZ} \in L^{p}(X;\ell^2(\ZZ)).
\end{align*}
This completes the proof of the  multi-parameter oscillation inequality \eqref{eq:52} in Theorem \ref{thm:main}. 
\end{proof}

\end{document}